\numberwithin{equation}{section} 
\theoremstyle{definition}
\newtheorem{definition}{Definition}[section]
\newtheorem{remark}[definition]{Remark}
\newtheorem{example}[definition]{Example}
\theoremstyle{plain}
\newtheorem{theorem}[definition]{Theorem}
\newtheorem{proposition}[definition]{Proposition}
\newtheorem{lemma}[definition]{Lemma}
\newtheorem{corollary}[definition]{Corollary}
\newcommand{\up}{\vspace{-0.5cm}}
\newcommand{\Q}{\mathbb{Q}}
\newcommand{\Z}{\mathbb{Z}}
\newcommand{\calA}{\mathcal{A}}
\newcommand{\calI}{\mathcal{I}}
\newcommand{\calM}{\mathcal{M}}
\newcommand{\calK}{\mathcal{K}}
\newcommand{\calQ}{\mathcal{Q}}
\newcommand{\calR}{\mathcal{R}}
\newcommand{\calS}{\mathcal{S}}
\newcommand{\Gr}{\operatorname{Gr}}
\newcommand{\Hom}{\operatorname{Hom}}
\newcommand{\Lie}{\operatorname{Lie}}
\newcommand{\Push}{\operatorname{Push}}
\newcommand{\Sp}{\operatorname{Sp}}
\newcommand{\ab}{\operatorname{ab}}
\newcommand{\abf}{\operatorname{abf}}
\newcommand{\id}{\operatorname{id}}
\newcommand{\ve}{\varepsilon}
\newcommand{\ov}[1]{\widetilde{#1}}
\newcommand{\clo}[1]{\wideparen{#1}}     
\newcommand{\Com}[6]{{\rotatebox[origin=c]{180}{\rm{\large\textsf{V}}}}
\!\left({#1},{#2},{#3}\,;\,{#4},{#5},{#6}\right)}
\newcommand{\Diff}[6]{\mathsf{D}\!\left({#1},{#2}\,;\,{#3}
,{#4}\,;\,{#5},{#6}\right)}
\newcommand{\Square}[6]{\mathsf{S}\!\left({#1},{#2}\,;\!
\begin{array}{cc}{#3}\!\! & {#4}\!\!\\
{#5}\!\! & {#6}\!\! \end{array}\right)}
\newcommand{\Triple}[4]{\mathsf{T}\!\left({#1}\,;\,{#2} ,{#3},{#4}\right)}
\newcommand{\IHXone}[5]{\mathsf{IHX}_1\!\left({#1};{#2},{#3},{#4}\,;\,{#5}\right)}
\newcommand{\IHXtwo}[4]{\mathsf{IHX}_2\!\left({#1},{#2}\,;\,{#3}\,;\,{#4}\right)}
\newcommand{\IHXthree}[3]{\mathsf{IHX}_3\!\left({#1},{#2}\,;\,{#3}\right)}
\newcommand{\IHXthreep}[4]{\mathsf{IHX}'_3\!\left({#1},{#2}\,;\,{#3},{#4}\right)}
\newcommand{\by}[1]{\stackrel{\eqref{#1}}{=}}
\newcommand{\sdl}{\\[0.3cm]}
\begin{document}

\title[On the degree-two part of the associated graded 
of the Torelli group]{On the degree-two part of the associated graded \\ of  the lower central series of the Torelli group}

\author{Quentin Faes}
\address{Institute of Mathematics, University of Zurich, Winterthurerstrasse 190, CH-8057 Zurich, Switzerland}
\email{quentin.faes@math.uzh.ch}

\author{Gw\'ena\"el Massuyeau}
\address{
Universit\'e Bourgogne Europe, CNRS, IMB (UMR 5584), 21000 Dijon, France}
\email{gwenael.massuyeau@ube.fr}

\author{Masatoshi Sato}
\address{
Department of Mathematics and Data Science,
Tokyo Denki University,
5 Senjuasahi-cho, Adachi-ku, Tokyo 120-8551,
Japan}
\email{msato@mail.dendai.ac.jp}

\date{}

\keywords{}
\thanks{}

\begin{abstract}
We consider the associated graded 
$\bigoplus_{k\geq 1} \Gamma_k \mathcal{I} / 
\Gamma_{k+1} \mathcal{I} $
of  the lower central series 
$\mathcal{I} = \Gamma_1 \mathcal{I}
\supset \Gamma_2 \mathcal{I}
\supset \Gamma_3 \mathcal{I}  \supset \cdots$
of  the Torelli group $\mathcal{I}$ 
of a compact oriented surface.
Its degree-one part is well-understood  by D. Johnson's seminal works
on the abelianization of the  Torelli group.
The knowledge of the degree-two part 
$(\Gamma_2 \mathcal{I} / \Gamma_3 \mathcal{I})\otimes \Q$
with rational coefficients
arises from works of S. Morita  on the Casson invariant
and R. Hain on the Malcev completion of  $\mathcal{I}$.
Here, we  prove that the abelian group 
$\Gamma_2 \mathcal{I} / \Gamma_3 \mathcal{I}$ 
is torsion-free, 
and we describe it as a lattice 
in a rational vector space.
As an application,
the group $\mathcal{I}/\Gamma_3 \mathcal{I}$
is computed, and it is shown to embed 
in the group of homology cylinders 
modulo the surgery relation of $Y_3$-equivalence.
\end{abstract}

\maketitle

\vspace{-0.5cm}
\tableofcontents
\vspace{-1cm}

\section{Introduction} \label{sec:intro}

Let $\Sigma$ be a compact, connected, oriented surface with boundary.
Its \emph{mapping class group} $\calM:=\calM(\Sigma)$ 
is the group of isotopy classes of orientation-preserving 
homeomorphisms of $\Sigma$ that restrict to the identity on  $\partial \Sigma$.
Here, for simplicity,
we assume that $\Sigma$ has exactly one boundary component 
and genus $g\geq 3$.
The subgroup $\calI:=\calI(\Sigma)$ of $\calM$
that acts trivially on the homology $H:= H_1(\Sigma;\Z)$ of the surface
is known as \emph{the Torelli group} of $\Sigma$.
It contains the  subgroup $\calK:=\calK(\Sigma)$ 
that is generated by Dehn twists along bounding simple closed curves,
the latter being known as the \emph{Johnson subgroup} of~$\Sigma$.

Johnson fully determined in \cite{Jo85b}
the abelianization $\calI_{\ab}:=\calI/[\calI,\calI]$ of the Torelli group. 
In particular, he proved that
 $\calI_{\ab}$ has non-trivial torsion
(with only order-two elements),
and that its torsion-free abelianization
$
\calI_{\abf} 
:= \calI_{\ab} \big/ \hbox{Tors}(\calI_{\ab} )
$
is canonically isomorphic to $\Lambda^3 H$
through the \emph{first Johnson homomorphism}\\[-0.5cm]
$$
\tau_1: \calI \longrightarrow \Lambda^3 H,
$$
which encodes the action of $\calI$
on the second nilpotent quotient of $\pi_1(\Sigma)$.
Since the kernel of~$\tau_1$ coincides with the Johnson subgroup,
we have $\calI_{\abf}=\calI/\calK$
or, equivalently, 
$\hbox{Tors}(\calI_{\ab} ) = \calK/[\calI,\calI]$.

Consider now the lower central series 
$\Gamma_1\calI \supset \Gamma_2\calI \supset \Gamma_3\calI \supset \cdots $
of the Torelli group, which is defined by $\Gamma_1\calI=\calI$,
$\Gamma_2 \calI =[\calI,\calI]$,
$\Gamma_{3} \calI = [\calI,[\calI,\calI]]$ and so on.
Its associated graded
$$
\operatorname{Gr}^\Gamma \calI = \bigoplus_{k\geq 1}
 \frac{\Gamma_k \calI}{\Gamma_{k+1} \calI}
$$
 is a graded Lie $\Z$-algebra,
which is generated by its  degree $1$ part, namely by $\calI_{\ab}$.
As a general fact,
the associated graded of the lower central series of a group
is, with rational coefficients, the same as the associated graded
of the natural filtration on the Malcev Lie algebra of the group.
In the case of the Torelli group, much more is known:
in the fundamental work \cite{Ha97}, Hain proved that
the degree-completion of
$\big(\operatorname{Gr}^\Gamma \calI\big)\otimes \Q$ 
is (non-canonically) isomorphic 
to the Malcev Lie algebra of $\calI$
and, furthermore, he identified the latter graded Lie $\Q$-algebra 
 in the following way.
 
Let $H^\Q:=H\otimes \Q$ and consider the free Lie $\Q$-algebra $\Lie(\Lambda^3 H^\Q)$
generated by $\Lambda^3 H^\Q$.
Clearly, the map $J^\Q$ defined by the composition
\begin{equation} \label{eq:JQ}
\xymatrix{
\Lie(\Lambda^3 H^\Q)
\ar[rr]^-{\Lie(\tau_1^{-1})}_-\simeq
\ar@/_0.7cm/@{-->}[rrrr]_{J^\Q}
& & \Lie(\calI_{\ab}\otimes \Q) \ar@{->>}[rr]
&& \big(\operatorname{Gr}^\Gamma \calI\big)\otimes \Q
}    
\end{equation}
is surjective, 
so that the determination of 
$\big(\operatorname{Gr}^\Gamma \calI\big)\otimes \Q$
resumes at the computation of the ideal of \emph{rational relations} $R^\Q:=\ker(J^\Q)$.
Note that the conjugacy action of $\calM$ on $\calI$
induces an action of the symplectic group
$\Sp(H^\Q)$ on 
$\big(\operatorname{Gr}^\Gamma \calI\big)\otimes \Q$,
such that $J^\Q$ is $\Sp(H^\Q)$-equivariant.
Here the symplectic form $\omega$ on $H^\Q$ is given by 
the homology intersection pairing.
Then, it turns out that the ideal $R^\Q$ is generated 
by its degree $2$ part $R^\Q_2$ 
for $g\geq 4$ \cite{Ha97,Ha15}. Furthermore, for $g\geq 3$,
the subspace $R_2^\Q$ can be described 
as the $\Sp(H^\Q)$-submodule of 
$\Lie_2(\Lambda^3 H^\Q) \simeq \Lambda^2(\Lambda^3 H^\Q)$
generated by two explicit elements   \cite{Ha97,HS00}.
Thus, using Morita's works on the Casson invariant 
and the second Johnson homomorphism \cite{Mor89,Mor91},
one obtains an explicit description of 
$$
(\Gamma_2 \calI / \Gamma_3 \calI)\otimes \Q \simeq
\Lambda^2(\Lambda^3 H^\Q) / R_2^\Q.
$$
Besides, the subspace $R_2^\Q$ 
 coincides with the kernel of a certain map 
$B$ which is defined explicitly 
on $\Lambda^2(\Lambda^3 H^\Q)$
in terms of contractions with $\omega$: 
see equation \eqref{eq:B_map} below 
for the exact definition.
In the sequel,
we shall refer to $B$ as the \emph{diagrammatic bracket}
since it corresponds in \cite{HM09}
to a restriction of the Lie bracket
of so-called ``symplectic Jacobi diagrams'',
and it gives rise there to a diagrammatic description 
of $(\Gamma_2 \calI / \Gamma_3 \calI)\otimes \Q $.\\

In this paper, we are interested in the $\Z$-module structure
of  $\operatorname{Gr}^\Gamma \calI$, 
which (beyond its rationalization)
seems to be largely unknown in degree greater than $1$. 
A first step in this direction is the following.\sdl
\textbf{Theorem~A.}
\emph{Let $g\geq 3$.
We have $[\calI,[\calI, \calI]] =[\calI,\calK]$.}\sdl
This is proved in Section~\ref{sec:proof_Th_A} 
as follows.
Firstly, by using Johnson's descriptions \cite{Jo85b} of 
the group $\calI_{\ab}$ and its subgroup
$\calK/[\calI,\calI]$,
we observe that the triviality of the quotient
$[\calI,\calK]/[\calI,[\calI, \calI]]$ follows from 
the nullity of two special elements
(Lemma \ref{lem:c1_c2}). 
This nullity is then ensured  by using some appropriate  ``Push'' maps 
for surfaces related to $\Sigma$.

Theorem~A has the following two  consequences.
On the one hand, the torsion of 
$\Gr^\Gamma _1\calI =\calI_{\ab}$
does not ``propagate'' to higher-degree torsion elements 
in $\Gr^\Gamma \calI$.
(However, note  that  $\operatorname{Gr}^\Gamma \calI$ 
does contain  non-trivial torsion 
in any odd degree: 
see the forthcoming paper \cite{NSS24}, 
which builds upon results of  \cite{NSS22} and \cite{KRW23}.)
On the other hand,  
the map \eqref{eq:JQ}  also exists with integral coefficients
in degree greater than $1$:
\begin{equation} \label{eq:J}
\xymatrix{
\Lie_{\geq 2}(\Lambda^3 H)
\ar[rr]^-{\Lie_{\geq 2}(\tau_1^{-1})}_-\simeq
\ar@/_0.7cm/@{-->}[rrrr]_{J}
& & \Lie_{\geq 2}(\calI_{\abf}) \ar@{->>}[rr]
&& \operatorname{Gr}^\Gamma_{\geq 2} \calI
}    
\end{equation}

Consequently, 
we can consider the ideal of \emph{integral relations} $R:=\ker(J)$, where $J$ is the above map.
Our next result, which is proved in Section~\ref{sec:proof_Th_B},
computes explicitly  its degree $2$ part $R_2$.\sdl
\textbf{Theorem~B.}
\emph{Let $g\geq 3$.
We have $R_2=\ker(B)\cap \Lambda^2 (\Lambda^3 H)$.}\sdl
Of course, the inclusion
$R_2 \subset \ker(B)\cap \Lambda^2 (\Lambda^3 H)$ 
follows from the above-mentioned identity 
$R_2^\Q=\ker(B)$ \cite{HM09}:
alternatively, this inclusion is  a direct consequence of 
the relationship between the homomorphism~$B$ and the pair 
(second Johnson homomorphism, Casson invariant)
on $[\calI,\calI]$,
which will be specified in Proposition \ref{prop:invariants}.
The  proof of the converse inclusion 
(i.e., the fact that any element of $\Lambda^2 (\Lambda^3 H)$
with trivial diagrammatic bracket
is a quadratic relation 
in the graded Lie ring $\Gr^\Gamma \calI$)
constitutes the most technical part of this paper.
In contrast with the above-mentioned characterizations 
of $R_2^\Q$
which use the representation theory of  $\Sp(H^\Q)$,
our proof of Theorem~B is rather pedestrian 
and proceeds in two steps. First,  
we fix a symplectic basis $S$ of~$H$,
and we produce an explicit finite generating system of 
$\ker(B)\cap \Lambda^2 (\Lambda^3 H)$
under the action of the subgroup
of the integral symplectic group $\Sp(H)$ that 
stabilizes $S\cup (-S)$:
see Theorem~\ref{th:K}. 
Second, we check that each of these generators 
defines a quadratic relation for $\Gr^\Gamma \calI$:
one of the ingredients here 
is the ``topological IHX relation''
that has been identified by Gervais and Habegger in \cite{GH02}.

Being isomorphic to a  lattice in the $\Q$-vector space $\Lambda^2(\Lambda^3 H^\Q)/ \ker(B)$,
the group  $\Gamma_2 \calI/\Gamma_3 \calI$ turns out to be torsion-free.
Another output of (the proof of)  Theorem~B  is an explicit description
of the quadratic relations.
Although it is known to be true with \emph{rational} coefficients 
from the above-mentioned work of Hain \cite{Ha97},
the graded Lie $\Z$-algebra $\operatorname{Gr}^\Gamma \calI$ 
may not be quadratically presented (even for $g$ large enough).
In this hypothesis,
it would still be necessary 
to determine the relations in higher degrees.

In Section \ref{sec:groups}, 
we  derive from Theorem~A and Theorem~B other consequences.
(Actually, the reader  only interested in these applications of Theorem~A and Theorem~B,
may  skip the detailed proofs of the latter, which are contained in
 \S \ref{subsec:nullity}-\S \ref{subsec:Push}
and \S \ref{subsec:strategy}-\S \ref{subsec:vanishing} respectively,
and jump directly to  Section \ref{sec:groups}.)
The exact structure of the groups $\Gamma_2 \calI/\Gamma_3 \calI$, $\calK/\Gamma_3 \calI$ 
and  $\calI/\Gamma_3 \calI$ is obtained in Theorem~\ref{th:groups} and Theorem~\ref{th:I/Gamma3I}.
Note that we do not consider here
the case of a closed surface.  
Nonetheless, it is very likely that one could derive 
from our study  similar results
for the same subquotients of the Torelli group 
of the surface $\clo{\Sigma}$ 
that is obtained from $\Sigma$
by gluing a closed disk along $\partial \Sigma$. We leave the study of the closed case, which is more technical, to future work.

Finally, there are two fundamental problems
about the lower central series of $\calI$ 
in connection with other filtrations,
to which we can provide (in low degrees) a few new pieces of answer.
On the one hand, recall that the ``Johnson filtration''
$\calM=\calM[0] \supset \calM[1] \supset \calM[2] \supset \cdots$
 consists of the subgroups of the mapping class group
acting trivially  
on the successive nilpotent quotients of $\pi_1(\Sigma)$.
On the Torelli group $\calM[1]=\calI$, this filtration is larger 
than the lower central series, and the following questions arise.\sdl
\textbf{Problem C.} 
\emph{Determine the kernel and the image of the homomorphism
\begin{equation}  \label{eq:graded}
\bigoplus_{k\geq 1} \frac{\Gamma_k \calI}{\Gamma_{k+1} \calI}
\longrightarrow 
\bigoplus_{k\geq 1} \frac{\calM[k]}{\calM[k+1]}
\end{equation}
from the associated graded of the lower central series of $\calI$
to the associated graded  of the Johnson filtration of $\calM$.}\sdl
Of course, the map \eqref{eq:graded} 
is very well-understood in degree $k=1$ 
thanks to~\cite{Jo85b}.
Subsequent works \cite{Mor89,Yok02,Faes23} also give
a nice description of its image in degree $k=2$.
Besides, it follows from  Hain's results \cite{Ha97}
that \eqref{eq:graded} is \emph{rationally} surjective;
see also  \cite{MSS20,KRW23}
for  recent advances in the determination of the kernel of
\eqref{eq:graded} with \emph{rational} coefficients.
Having ruled out the possibility of torsion in 
$\Gamma_2 \calI/\Gamma_3 \calI$, 
we determine the kernel of \eqref{eq:graded} for $k=2$
(see Corollary \ref{cor:intersection}).
Moreover, using the main result of \cite{Faes22},
we obtain that \eqref{eq:graded} is surjective for $k=3$
(see Corollary \ref{cor:M[3]}).

On the other hand, let $\mathcal{IC}:= \mathcal{IC}(\Sigma)$
be the monoid of homology cylinders over $\Sigma$.
The study of $\mathcal{IC}$ by means of surgery techniques 
and finite-type invariants has been  initiated 
by Goussarov and Habiro in \cite{Gou00,Ha00}. 
Following this study, 
we are  interested in the comparison between 
the lower central series of $\calI$
and the ``$Y$-filtration''  
$\mathcal{IC} = Y_1 \mathcal{IC} \supset  Y_2 \mathcal{IC} 
\supset  Y_3 \mathcal{IC} \supset \cdots $ 
Indeed,  $\calI$ embeds into $\mathcal{IC}$ via the ``mapping cylinder'' construction, which preserves those filtrations.\sdl
\textbf{Problem D.} 
\emph{Determine the kernel and the image of the homomorphism
\begin{equation}  \label{eq:graded_Y}
\bigoplus_{k\geq 1} \frac{\Gamma_k \calI}{\Gamma_{k+1} \calI}
\longrightarrow 
\bigoplus_{k\geq 1} \frac{Y_k\mathcal{IC}}{Y_{k+1}}.
\end{equation}
from the associated graded of the lower central series of $\calI$
to the associated graded  of the $Y$-filtration of $\mathcal{IC}$.}\sdl
The case $k=1$ is already known \cite{Ha00,MM03}.
Besides, the question of the injectivity for arbitrary~$k$, 
which has already been asked in \cite{HM09},
recently received a partial answer in \cite{KRW23}
for \emph{rational} coefficients
(in the stable range of the genus $g$,
and up to central elements).
Using some of the results of~\cite{MM13},
we solve Problem D in degree $k=2$  (see Theorem~\ref{th:IC/Y3}).\\

\noindent
\textbf{Acknowledgment.}
The first author was supported by Grant 200020\textunderscore 207374 of the Swiss National Science Foundation.
The work of the second author was partly funded by the project “AlMaRe” (ANR-19-CE40-0001-01);
the IMB receives support from the EIPHI Graduate School (ANR-17-EURE-0002).
The work of the third author was partly supported by JSPS KAKENHI Grant Numbers JP18K03310 and
JP22K03298.
\hfill $\blacksquare$ \\

\noindent
\textbf{Conventions.}
Given a group $G$ and a normal subgroup $N$ of $G$,
the class modulo $N$ of an element $x\in G$ is denoted by $\{x\}_N\in G/N$,
or simply  $\{x\}$ (or even just $x$) if there is no risk of confusion.
For any elements $x,y\in G$, 
we set ${}^xy=xyx^{-1}$, $y^x=x^{-1}yx$
and  $[x,y]=xyx^{-1}y^{-1}$.

If not specified, the ground ring for linear algebra is $\Z$.
(So, for instance, a ``module'' means a $\Z$-module,
i.e., an abelian group.)
For  $k\geq 1$, $\Z_k := \Z/k\Z$ 
is
the cyclic group of order $k$. \hfill $\blacksquare$

\section{Proof of Theorem~A}

\label{sec:proof_Th_A}

To prove Theorem~A, we need to show that the abelian group
$$
Q:= \frac{[\calI,\calK]}{[\calI,[\calI, \calI]]} 
$$
is trivial, and for this, we consider the bilinear map
$$
\Upsilon: \calI_{\ab} \times \frac{\calK}{[\calI,\calI]}
\longrightarrow Q
$$
defined by restricting the Lie bracket of $\Gr^\Gamma \calI$
to degrees $1+1$.
Let  $(\gamma_+,\gamma_-)$ be a pair of simple closed curves
cobounding a subsurface of genus $1$ in $\Sigma$,
and recall from \cite{Jo79} that $\calI$ is normally generated in $\calM$ 
by  the product $p:=T_{\gamma_+} T_{\gamma_-}^{-1}$ of opposite Dehn twists: 
hence $ \calI_{\ab}$ is generated by $\{p\}_{[\calI,\calI]}$ as an $\Sp(H)$-module.
Since $\Upsilon$ is surjective and $\Sp(H)$-equivariant, 
the triviality of $Q$ is equivalent to the nullity
of the map 
$$
\upsilon:=\Upsilon(\{p\},-): {\calK}/{[\calI,\calI]} \longrightarrow Q,
\ \{k\}_{[\calI,\calI]} \longmapsto \big\{[p,k]\big\}_{[\calI,[\calI, \calI]]}.
$$
By Johnson's work
on the abelianized Torelli group~\cite{Jo85b},
the sources of the map $\Upsilon$ and $\upsilon$ are perfectly well understood.
We now review his work very briefly.

\subsection{The abelianization of the Torelli group}

Recall that, by our assumption, the genus $g$ of $\Sigma$ is at least $3$.
Then, the main result of  \cite{Jo85b} is an 
$\Sp(H)$-equivariant isomorphism 
\begin{equation} \label{eq:ab_Torelli}
\xymatrix{
\calI_{\ab} \ar[r]^-{(\tau_1,\beta)}_-\simeq & 
\Lambda^3 H \times_{\Lambda^3 H \otimes \Z_2} B_{\leq 3}(\calQ)
}    
\end{equation}
where $\calQ$ denotes the $\Z_2$-affine space of quadratic forms
with polar form $\omega \otimes \Z_2$, the space
$B_{\leq k}(\calQ)$ consists  of polynomial functions $\calQ\to \Z_2$
of degree at most $k$, the map
$\beta:\calI \to B_{\leq 3}(\calQ)$
is the Birman--Craggs homomorphism,
and the maps defining the above fibered product are as follows:
the map $\Lambda^3 H \to \Lambda^3 H \otimes \Z_2$
is the canonical homomorphism,
and the map $B_{\leq 3}(\calQ) \to \Lambda^3 H \otimes \Z_2$
takes the third differential of cubic functions.

Recall that both $\tau_1$ and $\beta$ can be  computed as follows
on the product  $T_{\varepsilon}T_\delta^{-1}$ of opposite Dehn twists 
along a pair $(\varepsilon,\delta)$
of disjoint and cobounding, simple closed curves:
\begin{equation} \label{eq:tau1_beta}
\tau_1\big(T_{\varepsilon}T_\delta^{-1}\big)= 
-\sum_{i=1}^h u_i \wedge v_i \wedge e
\quad \hbox{and} \quad
\beta\big(T_{\varepsilon}T_\delta^{-1}\big) = 
\sum_{i=1}^h \overline{u_i} \, \overline{v_i} \, (\overline{e}+ \overline{1}).
\end{equation}
Here $(u_1,\dots,u_h,v_1,\dots,v_h)$ is a symplectic basis of
the subsurface cobounded by $\varepsilon$ and  $\delta$
(with the orientation induced from $\Sigma$),
and $e:=\{\varepsilon\}$ is the homology class of 
$\varepsilon$ and $\delta$
(which have been oriented so that the oriented boundary 
of the previous subsurface is $\varepsilon \cup (-\delta)$).
Furthermore, we use the following notations:
$\overline{1}: \calQ \to \Z_2$ is the constant non-trivial function
and, for any $h\in H$, $\overline{h}: \calQ \to \Z_2$ 
is the evaluation at $h$.

\begin{remark} \label{rem:Sp-action}
Note that
\begin{equation} \label{eq:quadriticity}
    \overline{h_1+h_2} = \overline{h_1} + \overline{h_2}
    + \omega(h_1,h_2)\, \overline{1},
    \quad \hbox{for any $h_1,h_2\in H$}.
\end{equation}    
Besides, 
the canonical action of $\Sp(H)$ on $B_{\leq k}(\calQ)$ 
preserves the algebra structure, 
and is such that
$M\cdot \overline{h} = \overline{M(h)}$ 
for any $M\in \Sp(H)$ and $h\in H$. \hfill $\blacksquare$
\end{remark}

As a consequence of \eqref{eq:ab_Torelli}, we also have an 
$\Sp(H)$-equivariant isomorphism
\begin{equation} \label{eq:K_quotient}
\xymatrix{
 \calK /[\calI,\calI] \ar[r]^-\beta_-\simeq & B_{\leq 2}(\calQ),
 }
\end{equation}
which can be computed as follows:
for any bounding simple closed curve $\delta$, we have
\begin{equation} \label{eq:beta_BSCC} 
\beta(T_\delta) = 
\sum_{i=1}^h \overline{u_i} \cdot \overline{v_i}
\end{equation}
where $(u_1,\dots,u_h,v_1,\dots,v_h)$ is a symplectic basis of
the subsurface of $\Sigma$ that is bounded by $\delta$.

\subsection{Nullity of the map $\upsilon$}
\label{subsec:nullity}

\begin{figure}[h]
\includegraphics[scale=0.8]{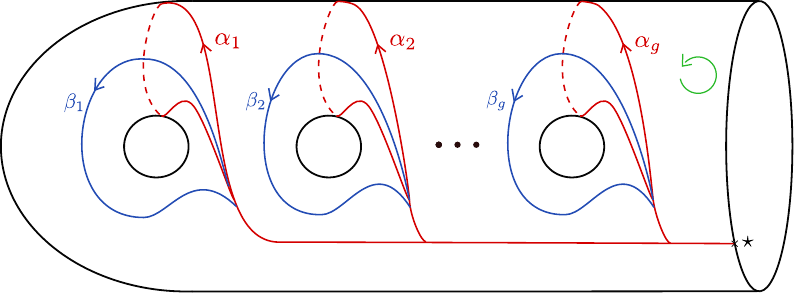}
\caption{A system of ``meridians  \& parallels'' in  $\Sigma$}
\label{fig:basis}
\end{figure}

Let $(\alpha_1,\dots,\alpha_g,\beta_1,\dots,\beta_g)$
be a system of ``meridians  \& parallels'' in $\Sigma$
as shown in Figure \ref{fig:basis}.
These are oriented simple closed curves
and, with the arcs to the basepoint shown on the same figure,
they define a basis of the free group
$\pi:=\pi_1(\Sigma, \star)$ with $\star \in \partial \Sigma$.
The corresponding basis of $H=H_1(\Sigma;\Z)$ is denoted by 
\begin{equation} \label{eq:symplectic_basis}
 \{a_1,\dots,a_g,b_1,\dots,b_g\}.
\end{equation}
In the sequel, we take $\gamma_+$ and $\gamma_-$
to be the curves shown
in  Figure~\ref{fig:gamma_c_1_c_2}.

\begin{figure}[h]
\includegraphics[scale=0.55]{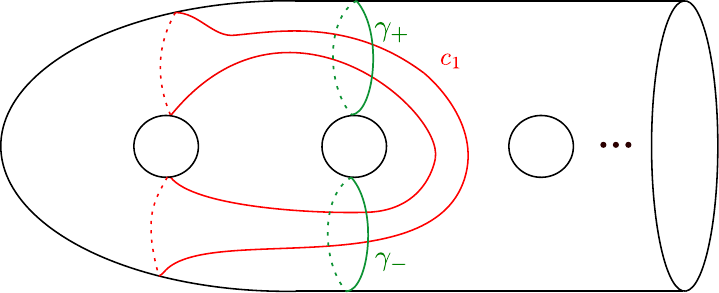} \quad \includegraphics[scale=0.55]{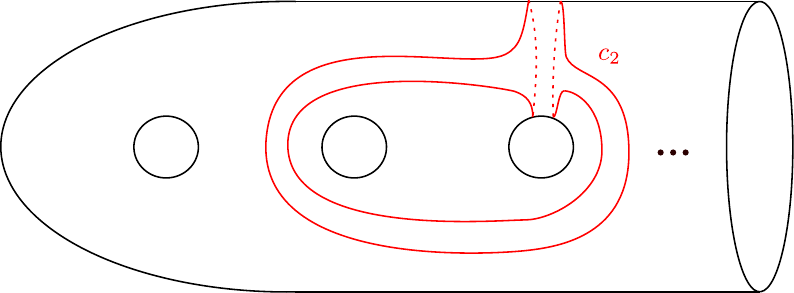} 
\caption{The cobounding pair $(\gamma_+,\gamma_-)$,
and the bounding simple closed curves $c_1,c_2$}
\label{fig:gamma_c_1_c_2}
\end{figure}

Let $\calS_p$ be the subgroup of $\Sp(H)$ stabilizing  the class
$\{p\} \in \calI_{\ab}$ of  $p=T_{\gamma_+} T_{\gamma_-}^{-1}$.

\begin{lemma} \label{lem:S_p}
As an $\calS_p$-module, $B_{\leq 2}(\calQ)$ is generated by the following elements:
\begin{equation} \label{eq:generators}
\overline{a_1}\, \overline{b_1}, \ \overline{a_2}\, \overline{b_2}, \
\overline{a_3}\, \overline{b_3}, \ \overline{a_1}\, \overline{b_2}, \
\overline{a_3}\, \overline{b_2}.
\end{equation}
\end{lemma}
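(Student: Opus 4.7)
The plan is to exhibit enough elements of $\calS_p$ and to apply them to the five candidate generators until their translates span $B_{\leq 2}(\calQ)$ as a $\Z_2$-vector space. By \eqref{eq:tau1_beta}, the class $\{p\} \in \calI_\ab$ is determined by the three vectors $a_1,b_1$ and a third class $e$ coming from the chosen cobounding pair $(\gamma_+,\gamma_-)$ of Figure~\ref{fig:gamma_c_1_c_2} (living essentially in the second handle). Consequently, the pointwise stabilizer $\calS_p^0$ of $\{a_1,b_1,e\}$ in $\Sp(H)$ is already contained in $\calS_p$; it contains an embedded copy of $\Sp(2g-4,\Z)$ acting on $H_{\geq 3}:=\langle a_3,b_3,\dots,a_g,b_g\rangle$ together with certain shears adjusting $b_2$. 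Using Remark~\ref{rem:Sp-action} and the quadraticity relation \eqref{eq:quadriticity}, I want to propagate the five generators along $\calS_p^0$-orbits and pick up lower-degree terms as a by-product.

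First, I would work on the top-degree (quadratic) part. Permuting handles with indices $\geq 3$ by elements of $\calS_p^0$ carries $\overline{a_3}\,\overline{b_3}$ to every $\overline{a_i}\,\overline{b_i}$ with $i\geq 3$; applying a symplectic transvection $a_i\mapsto a_i+a_j$ and expanding via \eqref{eq:quadriticity} produces $\overline{a_i}\,\overline{b_j}$, and then further transvections of the form $b_j\mapsto b_j+a_k$ yield the ``Lagrangian'' monomials $\overline{a_i}\,\overline{a_j}$ and $\overline{b_i}\,\overline{b_j}$ with $i,j\geq 3$. In the same spirit, $\overline{a_3}\,\overline{b_2}$ propagates under $\calS_p^0$ to every $\overline{a_i}\,\overline{b_2}$, $\overline{b_i}\,\overline{b_2}$ with $i\geq 3$, while $\overline{a_1}\,\overline{b_2}$ propagates to every $\overline{a_1}\,\overline{x}$ with $x\in H_{\geq 3}$; the generators $\overline{a_1}\,\overline{b_1}$ and $\overline{a_2}\,\overline{b_2}$ supply the diagonal contribution from the first two handles, and a few further ``shear'' elements of $\calS_p$ that permute $b_2$ with vectors in $H_{\geq 3}$ close the remaining cases involving handle~$2$.

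Second, the lower-degree elements are extracted from the quadratic ones via \eqref{eq:quadriticity}. Applying to a quadratic generator $\overline{u}\,\overline{v}$ an element $M\in \calS_p$ with $M(u)=u+u'$ and $\omega(u,u')\neq 0$ gives $\overline{u}\,\overline{v}+\overline{u'}\,\overline{v}+\omega(u,u')\,\overline{v}$, so after subtracting the two already-generated quadratic pieces the linear function $\overline{v}$ is produced. Iterating on the other factor as well (i.e., translating $v$ by a suitable vector) produces the constant $\overline{1}$. Running through the $\overline{a_i}$ and $\overline{b_j}$ exhausts the linear part of $B_{\leq 2}(\calQ)$, and we are done.

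The main obstacle is that $\calS_p$ is considerably smaller than $\Sp(H)$: although permutations and transvections internal to $H_{\geq 3}$ are automatically in $\calS_p^0$, the condition that both $\tau_1(p)$ and $\beta(p)$ be preserved heavily restricts which transvections mixing handles $1,2$ with handles $\geq 3$ are admissible. This is exactly why each of the five generators is indispensable: for instance, no element of $\calS_p$ can send $\overline{a_1}\,\overline{b_2}$ to $\overline{a_3}\,\overline{b_2}$, because such a move would alter the $(a_1,b_1)$-content of $\beta(p)$. The combinatorial heart of the proof therefore lies in checking, for each propagation step above, that a concrete symplectic transformation realizing the desired move actually survives both constraints $M\cdot\tau_1(p)=\tau_1(p)$ and $M\cdot\beta(p)=\beta(p)$.
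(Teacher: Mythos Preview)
Your strategy is exactly the paper's: exhibit explicit elements of $\calS_p$, act on the five generators, and use the quadraticity relation \eqref{eq:quadriticity} to climb down in degree. The organization by ``quadratic part first, then linear and constant via \eqref{eq:quadriticity}'' is also how the paper proceeds in practice, even though it indexes the verification by the seven monomial types $\overline{a_i}\,\overline{b_i}$, $\overline{a_i}\,\overline{b_j}$, $\overline{a_i}\,\overline{a_j}$, $\overline{b_i}\,\overline{b_j}$, $\overline{a_i}$, $\overline{b_i}$, $\overline{1}$.

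The genuine gap is the one you name yourself in your last paragraph: you never produce the concrete elements of $\calS_p$ that lie \emph{outside} your pointwise stabilizer $\calS_p^0$, and those are precisely the ones that do the hard work. The subgroup $\calS_p^0$ (fixing $a_1,b_1,a_2$) handles everything internal to $H_{\geq 3}$ and the shears of $b_2$ by $H_{\geq 3}$, but it cannot touch $a_1$ or $b_1$; so from $\calS_p^0$ alone you will never reach $\overline{b_1}\,\overline{x}$, $\overline{a_1}\,\overline{a_2}$, the linear functions $\overline{a_1},\overline{b_1}$, or the constant $\overline{1}$. The paper fills this gap by writing down, and checking membership in $\calS_p$ of, four explicit families: $C_1$ (sending $b_1\mapsto b_1+a_1$), $D_i$ (sending $(b_i,b_{i+1})\mapsto(b_i+a_{i+1},b_{i+1}+a_i)$, including the crucial case $i=1$), $E_i$ for $i\neq 2$ (the quarter-turn $(a_i,b_i)\mapsto(-b_i,a_i)$, including $i=1$), and $F_{ij}$ for $i,j\geq 3$. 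For example, $\overline{a_1}\,\overline{a_2}$ comes from $D_1\cdot(\overline{a_1}\,\overline{b_1})$, the $\overline{b_1}\,\overline{x}$ terms come from applying $E_1$ to $\overline{a_1}\,\overline{x}$, and the constant $\overline{1}$ is extracted from $C_1\cdot\overline{b_1}$. Until you exhibit and verify such elements, your argument remains a plan rather than a proof.

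One minor point: your final paragraph about indispensability of each generator is irrelevant to the lemma, which only asserts generation, not minimality; and the specific claim that no element of $\calS_p$ carries $\overline{a_1}\,\overline{b_2}$ to $\overline{a_3}\,\overline{b_2}$ would itself require proof.
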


\begin{proof}
It follows from the isomorphism \eqref{eq:ab_Torelli} and from
formulas~\eqref{eq:tau1_beta}
that  $\calS_p$ is the subgroup of $\Sp(H)$ fixing both
$a_1 \wedge b_1 \wedge a_2 \in \Lambda^3 H$ and 
$\overline{a_1}\, \overline{b_1} 
\, (\overline{1} + \overline{a_2}) \in  B_{\leq 3}(\calQ)$. 
Thus, using Remark \ref{rem:Sp-action},
we easily check that the following elements of $\Sp(H)$ belong to $\calS_p$:
\begin{itemize}
\item $C_1$ maps $b_1$ to $b_1+a_1$
and fixes all other elements of \eqref{eq:symplectic_basis};
\item for $i\neq g$, $D_i$ maps $(b_i,b_{i+1})$ to $(b_i+a_{i+1},b_{i+1}+a_i)$
and fixes all other elements of \eqref{eq:symplectic_basis};
\item for $i\neq 2$, $E_i$ maps $(a_i,b_i)$ to $(-b_i,a_i)$ 
and fixes all other elements of \eqref{eq:symplectic_basis};
\item for $i,j\geq 3$, $F_{ij}$ exchanges $(a_i,b_i)$ with $(a_j,b_j)$
and fixes all other elements of \eqref{eq:symplectic_basis}.
\end{itemize}
Let $V$ be the $\calS_p$-submodule of $B_{\leq 2}(\calQ)$ generated by \eqref{eq:generators}.
As a $\Z_2$-vector space, $B_{\leq 2}(\calQ)$ 
is generated by the following elements:
\begin{center}
\begin{tabular}{llll}
    (i) \ \ $\overline{a_i}\,\overline{b_i}$ &
    (ii) \ $\overline{a_i}\,\overline{b_j}$ ($i\neq j$) \qquad &
    (iii) \ $\overline{a_i}\,\overline{a_j}$ ($i< j$) \qquad &  \\
    (iv)  \  $\overline{b_i}\,\overline{b_j}$ ($i<j$) \qquad &
    (v) \ $\overline{a_i}$ & (vi) \ $\overline{b_i}$ & (vii) \ $\overline{1}$
\end{tabular}
\end{center}
Hence, it suffices to check that the above elements belong to $V$:
\begin{itemize}
\item
For $i\geq 4$, we have $\overline{a_i}\overline{b_i}\in V$
since  $\overline{a_i}\overline{b_i} 
= F_{3i} \cdot \overline{a_3}\overline{b_3}$, which gives the proof for (i).
\item We have $D_2 \cdot \overline{a_2} \overline{b_2} 
= \overline{a_2} \overline{b_2} + \overline{a_2} \, \overline{a_3}$,
hence $\overline{a_2} \,\overline{a_3}\in V$
which implies $\overline{a_2} \,\overline{a_i} =
F_{3i}\cdot \overline{a_2}\, \overline{a_3}  \in V$ for all $i\geq 3$. 
Next, $D_1 \cdot \overline{a_1} \overline{b_1}
= \overline{a_1} \overline{b_1} + \overline{a_1}\, \overline{a_2}$
hence $\overline{a_1}\, \overline{a_2}\in V$. 
Moreover, $D_2\cdot \overline{a_1} \overline{b_2}
=\overline{a_1} \overline{b_2}+ \overline{a_1}\, \overline{a_3}$
so that $\overline{a_1} \,\overline{a_3}\in V$
which implies $\overline{a_1}\, \overline{a_i}
=F_{3i}\cdot \overline{a_1}\, \overline{a_3} \in V$ for all $i\geq 3$. 
Finally, $D_3 \cdot \overline{a_3} \overline{b_3} = 
\overline{a_3} \overline{b_3}  + \overline{a_3}\, \overline{a_4}$
 so that $\overline{a_3}\, \overline{a_4} \in V$,
which implies $\overline{a_i}\, \overline{a_j}  \in V$ 
for all $3 \leq i <j$. This proves that elements of type (iii) belong to $V$.
\item For all $i\geq 3$, we have 
$\overline{a_i} \overline{b_2} = R_{3i}\cdot \overline{a_3} \overline{b_2}
\in V$. Moreover, 
for all $j \neq 2$ and $i\neq j$, 
we deduce from (iii) that $\overline{a_i} \overline{b_j} 
= E_j \cdot  \overline{a_i} \overline{a_j} \in V$.
Thus we have proved that elements of type (ii) belong to $V$.
\item For all $i \neq 2$ and $j\neq i$, 
we deduce from (ii) that $\overline{b_i} \overline{b_j} 
= E_i \cdot  \overline{a_i} \overline{b_j} \in V$, which proves (iv).
\item We have $D_1\cdot \overline{a_1} \overline{b_2}=
\overline{a_1} \overline{b_2} + \overline{a_1}$
which implies that $\overline{a_1}\in V$.
We have $D_1 \cdot \overline{b_1} \overline{a_2}
= \overline{b_1} \overline{a_2} + \overline{a_2}$
and so, by (ii), we get $\overline{a_2}\in V$.
Next, we have $C_1 \cdot \overline{b_1} \overline{a_3} =
\overline{a_1}\, \overline{a_3} + \overline{b_1} \overline{a_3}  
+ \overline{a_3}$ which, by (ii) and (iii), implies that
$\overline{a_3} \in V$. It follows that 
$\overline{a_i} = F_{3i} \cdot \overline{a_3}\in V$ for all $i\geq 3$,
which concludes for (v).
\item We have $C_1 \cdot \overline{b_1}\, \overline{b_2}=
\overline{b_1}\, \overline{b_2} +  \overline{a_1} \overline{b_2} 
+ \overline{b_2}$ so, using (ii) and (iv), 
we obtain $\overline{b_2} \in V$.
Since $\overline{b_i}=E_i \cdot \overline{a_i}$ for every $i\neq 2$,
we then deduce case (vi) from case (v).
\item Finally, $C_1 \cdot \overline{b_1} = \overline{b_1} 
+ \overline{a_1} + \overline{1}$, and case (vii) follows too.
\end{itemize}
\end{proof}

\begin{lemma} \label{lem:c1_c2}
As an $\calS_p$-module, 
the image of $\upsilon: \calK/[\calI,\calI]\to Q$ 
is generated by $\upsilon(T_{c_1})$
and $\upsilon(T_{c_2})$, 
where $c_1$ and $c_2$ are 
the simple closed curves shown in Figure \ref{fig:gamma_c_1_c_2}.
\end{lemma}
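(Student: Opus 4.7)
The proof exploits the $\calS_p$-equivariance of $\upsilon$ to reduce the problem to checking a few explicit generators. First, observe that $\Upsilon$ is $\Sp(H)$-equivariant: the Lie bracket on $\Gr^\Gamma \calI$ is induced by conjugation in $\calM$, which descends to an action of $\calM/\calI \cong \Sp(H)$ on the associated graded. Since $\{p\} \in \calI_{\ab}$ is fixed by $\calS_p$ by definition, the map $\upsilon = \Upsilon(\{p\},-)$ is $\calS_p$-equivariant, so its image is an $\calS_p$-submodule of~$Q$.

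Combining the $\Sp(H)$-equivariant isomorphism~\eqref{eq:K_quotient} with Lemma~\ref{lem:S_p}, the source $\calK/[\calI,\calI]$ is generated as an $\calS_p$-module by five elements whose $\beta$-images are the five products in~\eqref{eq:generators}. Consequently the image of $\upsilon$ is $\calS_p$-generated by the values of $\upsilon$ on lifts in $\calK$ of these five generators. The plan is to show that the three ``diagonal'' generators $\overline{a_i}\overline{b_i}$ with $i\in\{1,2,3\}$ are killed by $\upsilon$, and that the two remaining ``cross'' generators $\overline{a_1}\overline{b_2}$ and $\overline{a_3}\overline{b_2}$ yield, modulo the diagonal ones, precisely $\upsilon(T_{c_1})$ and $\upsilon(T_{c_2})$.

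For each diagonal generator $\overline{a_i}\overline{b_i}$, I will exhibit a bounding simple closed curve $\delta_i \subset \Sigma$, disjoint from both $\gamma_+$ and $\gamma_-$, such that $\beta(T_{\delta_i}) = \overline{a_i}\overline{b_i}$ by formula~\eqref{eq:beta_BSCC}. The curve $\delta_1$ is taken inside the genus-one subsurface cobounded by $\gamma_\pm$, separating off a one-handle carrying $(\alpha_1, \beta_1)$; the curves $\delta_2, \delta_3$ bound one-handles carrying $(\alpha_i, \beta_i)$ inside the complement of that subsurface. Being disjoint from $\gamma_\pm$, the twist $T_{\delta_i}$ commutes in $\calM$ with both $T_{\gamma_+}$ and $T_{\gamma_-}$, hence with $p$. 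Therefore $[p,T_{\delta_i}]=1$ in $\calI$, and $\upsilon(T_{\delta_i})=0$ in $Q$.

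The technical heart of the argument is the treatment of the two cross generators. Since $(a_i,b_j)$ with $i \neq j$ is not a symplectic pair in $H$, neither $\overline{a_1}\overline{b_2}$ nor $\overline{a_3}\overline{b_2}$ can be realized as $\beta(T_\delta)$ for a single genus-one BSCC. Instead, the curves $c_1, c_2$ shown in Figure~\ref{fig:gamma_c_1_c_2} bound higher-genus subsurfaces whose homology sublattices are chosen so that, after expanding $\beta(T_{c_j})$ via~\eqref{eq:beta_BSCC} and applying the quadraticity relation~\eqref{eq:quadriticity} in a suitable symplectic basis, one recovers the cross generator plus a correction lying in the $\calS_p$-submodule generated by the diagonal generators $\overline{a_i}\overline{b_i}$. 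Since $\beta$ is an isomorphism, this identifies the cross-generator lifts with $T_{c_1}, T_{c_2}$ modulo $\calS_p$-translates of the $T_{\delta_i}$ in $\calK/[\calI,\calI]$, and the equalities of $\upsilon$-values follow from the previous paragraph. The main obstacle is carrying out this $\beta$-computation: one must read off from the figure the correct homology sublattices of~$H$ for $c_1$ and~$c_2$, produce explicit symplectic bases that make the cross-terms appear, and then check that every ``extra'' summand in the expansion truly lies in the $\calS_p$-orbit of some $\overline{a_i}\overline{b_i}$.
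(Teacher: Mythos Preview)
Your approach is essentially the paper's, and is correct except for one concrete slip: the curve $\delta_2$ you describe does not exist. Since $[\gamma_+]=[\gamma_-]=a_2$ and $\omega(b_2,a_2)=-1$, every closed curve in the homology class $b_2$ (indeed, every class congruent to $b_2$ mod~$2$) must meet $\gamma_\pm$; hence no subsurface disjoint from $\gamma_\pm$ carries $b_2$ in its homology image in $H$, and $\overline{a_2}\,\overline{b_2}$ cannot be realized as $\beta(T_\delta)$ for any single BSCC $\delta$ disjoint from $\gamma_\pm$. The paper circumvents this by taking, instead of a single twist, a \emph{product}: with nested bounding curves $\gamma_1\subset\gamma_2\subset\gamma_3$ (of genera $1,2,3$, encircling the first one, two, three handles, and each disjoint from $\gamma_\pm$) one sets $f_1:=T_{\gamma_1}$, $f_2:=T_{\gamma_1}T_{\gamma_2}$, $f_3:=T_{\gamma_2}T_{\gamma_3}$, so that $\beta(f_i)=\overline{a_i}\,\overline{b_i}$ for $i=1,2,3$ while $\upsilon(f_i)=0$ because each $T_{\gamma_j}$ commutes with $p$.

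A minor inaccuracy in your description of the cross generators: in the paper $c_1,c_2$ are \emph{genus-one} BSCCs, bounding one-handles with symplectic bases $(a_1,b_1+b_2)$ and $(a_3,b_2+b_3)$. Then \eqref{eq:quadriticity} gives directly
\[
\beta(T_{c_1})=\overline{a_1}\,\overline{b_1}+\overline{a_1}\,\overline{b_2},
\qquad
\beta(T_{c_2})=\overline{a_3}\,\overline{b_3}+\overline{a_3}\,\overline{b_2},
\]
so the ``corrections'' are just the diagonal generators themselves, and one takes $f_4:=f_1T_{c_1}$, $f_5:=f_3T_{c_2}$. No $\calS_p$-translates are needed at this step.
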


\begin{proof}
Since  the isomorphism  \eqref{eq:K_quotient} is $\Sp(H)$-equivariant,
we deduce from Lemma \ref{lem:S_p}  that the $\calS_p$-module
$\calK/[\calI,\calI]$ is generated by the classes of any 
$f_1,f_2,\dots, f_5 \in \calK$ such that
$$
\beta(f_1)=\overline{a_1}\cdot \overline{b_1}, \ 
\beta(f_2) = \overline{a_2}\cdot \overline{b_2}, \
\beta(f_3) = \overline{a_3}\cdot \overline{b_3}, \ 
\beta(f_4) = \overline{a_1}\cdot \overline{b_2}, \
\beta(f_5) = \overline{a_3}\cdot \overline{b_2}.
$$
Using \eqref{eq:beta_BSCC} and considering 
the following bounding curves $\gamma_1,\gamma_2,\gamma_3$, 
we see that 
$f_1:=T_{\gamma_1}$,  $f_2:=T_{\gamma_1} T_{\gamma_2}$ 
and $f_3:= T_{\gamma_2} T_{\gamma_3}$ suit our purposes:
$$
\includegraphics[scale=0.5]{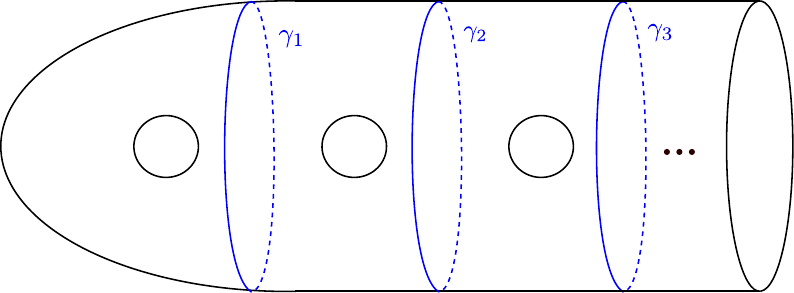}
$$
Besides, \eqref{eq:beta_BSCC}  also gives
$$
\beta(T_{c_1}) = 
\overline{a_1} \cdot \overline{b_1+b_2} \by{eq:quadriticity}
\overline{a_1} \cdot \overline{b_1} + \overline{a_1} \cdot \overline{b_2}
\quad \hbox{and} \quad 
\beta(T_{c_2}) = 
\overline{a_3} \cdot \overline{b_2+b_3}  \by{eq:quadriticity}
\overline{a_3} \cdot \overline{b_2} 
+ \overline{a_3} \cdot \overline{b_3},
$$
showing that we can take $f_4 := f_1 T_{c_1}$
and $f_5 :=  f_3 T_{c_2}$.

Since the map $\upsilon$ is $\calS_p$-equivariant and since its source
is $\calS_p$-generated by $\{f_1\},\dots,\{f_5\}$,
its image is $\calS_p$-generated by $\upsilon(\{f_1\}),\dots,\upsilon(\{f_5\})$.
Since  the curve $\gamma_i$ is disjoint from $\gamma_+$ and $\gamma_-$
for each $i\in \{1,2,3\}$, we obviously have $\upsilon(\{T_{\gamma_i}\})=0$.
Hence, by definition of $f_1,\dots,f_5$, 
the image of $\upsilon$ is $\calS_p$-generated by $\upsilon(\{T_{c_1}\})$
and $\upsilon(\{T_{c_2}\})$.
\end{proof}

\emph{Assume} now  that both $[p,T_{c_1}]$ and $[p,T_{c_2}]$ belong  
to $\Gamma_3 \calI$:
this will follow from Example \ref{ex:c1}
and Example \ref{ex:c2}, respectively, in the next subsection.
Then we have $\upsilon(T_{c_1})=\upsilon(T_{c_2})=0$, 
and Lemma~\ref{lem:c1_c2} implies that the map $\upsilon$ is zero.
Therefore, the group $Q={[\calI,\calK]}/{[\calI,[\calI, \calI]]}$ is trivial,
which proves Theorem~A.

\subsection{The ``Push'' map and its applications}
\label{subsec:Push}

To prove that $[p,T_{c_i}] \in \Gamma_3 \calI$ for $i=1,2$
(see Examples \ref{ex:c1} and  \ref{ex:c2} below),
we  use the ``Push'' map. So we start by reviewing this construction.

Let $W$ be a compact, connected, oriented surface 
with $b\in\{1,2\}$ boundary components, 
and fix a connected component $\delta$ of $\partial W$.
The surface obtained from~$W$ by gluing a closed disk $D$ along $\delta$
is denoted by $\clo{W}$.
The unit tangent bundle of $\clo{W}$ is denoted by $UT(\clo{W})$;
we choose a base point $\overrightarrow{\ast} \in UT(\clo{W})$ 
which projects to the center $\ast \in D$.

There is a homomorphism
$\calM(W)\to \calM(\clo{W})$ that maps any $f$ to $\clo{f}:= f\cup \id_D $.
Moreover, there is a anti-homomorphism
$\Push:\pi_1\big(UT(\clo{W}),\overrightarrow{\ast}\big) \to \calM(W)$
given by
$
\Push(\hbox{fiber}) = T_{\delta},
$
and by
\begin{equation} \label{eq:Push}
\Push(\overrightarrow{\gamma}) = T_{\gamma_+} T_{\gamma_-}^{-1}    
\end{equation}
for any (smooth) oriented simple closed curve $\gamma$ in $\clo{W}$, 
whose unit tangent vector field
$\overrightarrow{\gamma}$ passes through $\overrightarrow{\ast}$:
here $\gamma_+$ (resp$.$ $\gamma_-$) is the boundary component
of a regular neighborhood $\hbox{N}(\gamma)$ of~$\gamma$   (containing $D$)
where the orientation induced by $\hbox{N}(\gamma)$ is the same as
(resp. opposite to) the orientation parallel to $\gamma$.
Then, the  \emph{Birman's exact sequence} is
$$
\xymatrix{
\pi_1\big(UT(\clo{W}),\overrightarrow{\ast}\big)\ar[r]^-{\Push} & \calM(W)
 \ar[r]^-{\clo{(\,\cdot\,)}} & \calM(\clo{W}) \ar[r] & 1,
}
$$
see \cite[\S 4.2.5]{FM12} or \cite{Put09}, for instance.
Furthermore,  the ``Push'' map is injective if $\chi(\clo{W})<0$,
but we shall not need this fact.

Let $V$ be another (compact, connected, oriented) surface
with $c\in\{1,2\}$ boundary components, 
and assume that one connected component of $\partial V$
is identified to $\delta$.
Then, by gluing $W$ and $V$ along $\delta$, we get a new surface $W^+=W \cup V$,
which has $b+c-2$ boundary components.

There is a homomorphism
$\calM(W)\to \calM({W}^+)$ that maps any $f$
to ${f}^+:= f\cup \id_V$.
In the sequel, we consider the following composition
$$
\xymatrix{
\pi_1\big(UT(\clo{W}),\overrightarrow{\ast}\big)\ar[r]^-{\Push}
\ar@/_0.5cm/@{-->}[rr]_{\Push^+} & \calM(W)
 \ar[r]^-{(\,\cdot\,)^+} & \calM(W^+) 
}
$$

\begin{proposition} \label{prop:push-push}
Assume that the curve $\delta$ bounds in $W^+$
(i.e. $b=1$ or $c=1$), 
so that $W^+$ has at most one boundary component.
Then, we have the following:
\begin{enumerate}
\item  $\Push^+$ takes values in $\calI(W^+)$;
\item  for any $u\in \pi_1\big(UT(\clo{W}),\overrightarrow{\ast}\big)$
and  $w\in \calM(W)$  such that $\clo{w}$ acts trivially on the quotient
$\pi_1(\clo{W},\ast)/\Gamma_{k} \pi_1(\clo{W},\ast)$ for some $k\geq 2$,
the commutator $\big[w^+,\Push^+(u)\big]$ in $\calM(W^+)$ belongs to $\Gamma_{k} \calI(W^+)$.
\end{enumerate}
\end{proposition}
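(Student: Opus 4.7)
Both parts (1) and (2) are verified on a natural group-generating set of $\pi_1(UT(\clo{W}),\overrightarrow{\ast})$ consisting of the fiber class $z$ and the tangent-vector lifts $\overrightarrow{\gamma}$ of simple closed curves $\gamma$ generating $\pi_1(\clo{W},\ast)$. For part (1), the explicit formulas give $\Push^+(z) = T_\delta$, which lies in $\calK(W^+) \subset \calI(W^+)$ since $\delta$ bounds in $W^+$ by hypothesis; and $\Push^+(\overrightarrow{\gamma}) = T_{\gamma_+} T_{\gamma_-}^{-1}$ is a bounding-pair map whose curves $\gamma_\pm$ cobound an annular neighborhood of $\gamma$ in $\clo{W} \subset W^+$ and are therefore homologous in $W^+$, placing this element in $\calI(W^+)$. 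As $\Push^+$ is an anti-homomorphism and $\calI(W^+)$ is a subgroup, the image lies entirely in $\calI(W^+)$.

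For part (2), the first step is to establish the conjugation identity
\[ w^+\, \Push^+(u)\, (w^+)^{-1} = \Push^+(\clo{w}_*(u)) \qquad (u \in \pi_1(UT(\clo{W}),\overrightarrow{\ast})), \]
in which $\clo{w}_*$ denotes the automorphism induced by the homeomorphism $\clo{w}$ (which fixes $\overrightarrow{\ast}$ because $\clo{w}$ is the identity on the disk $D$ containing $\ast$). This is verified on the above generators: the formula $w\, T_\gamma\, w^{-1} = T_{w(\gamma)}$ for Dehn twist conjugation, combined with $\clo{w}_*(\overrightarrow{\gamma}) = \overrightarrow{\clo{w}(\gamma)}$, matches both sides. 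Invoking the anti-homomorphism property of $\Push^+$, the commutator rewrites as
\[ [w^+, \Push^+(u)] = \Push^+(u^{-1}\, \clo{w}_*(u)). \]
It then remains to show that $u^{-1}\clo{w}_*(u) \in \Gamma_k\, \pi_1(UT(\clo{W}),\overrightarrow{\ast})$ for every $u$; once this is done, since $\Push^+$ is an anti-homomorphism valued in $\calI(W^+)$, it will map $\Gamma_k$ of the source into $\Gamma_k \calI(W^+)$, finishing the proof. This containment is verified on the generators $z$ and $\overrightarrow{\gamma}$: for $u = z$ it is trivial because $\clo{w}_*(z) = z$, and for $u = \overrightarrow{\gamma}$ it follows from the hypothesis $\gamma^{-1}\clo{w}_*(\gamma) \in \Gamma_k\pi_1(\clo{W})$ by lifting this containment through the central $\Z$-extension $\Z \to \pi_1(UT(\clo{W})) \to \pi_1(\clo{W})$.

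The principal difficulty lies in this last lifting step: a purely formal argument based only on the central extension would yield containment in $\Gamma_k\pi_1(UT(\clo{W})) \cdot Z$, leaving a potential $z$-discrepancy; closing this gap requires exploiting that $\clo{w}_*$ is induced by a diffeomorphism acting as the identity on a disk neighborhood of $\ast$, which controls the framing data. The case where $W$ has two boundary components is handled rapidly because $\clo{W}$ then has non-empty boundary, its unit tangent bundle is trivial, and $\pi_1(UT(\clo{W})) \cong \pi_1(\clo{W}) \times \Z$ as a direct product; the case where $W$ has a single boundary component is more delicate because $\clo{W}$ is closed and the unit tangent bundle is a nontrivial Seifert-fibered $3$-manifold, with the explicit relation $z^{2-2g} = \prod[a_i, b_i]$ entering into the computation.
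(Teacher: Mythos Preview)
Your overall architecture for part~(2) is the same as the paper's: use the $\calM(W)$-equivariance of $\Push$ to rewrite the commutator as $\Push^+\!\big(u^{-1}\overrightarrow{w}_*(u)\big)$, and then argue that $u^{-1}\overrightarrow{w}_*(u)\in\Gamma_k\pi_1(UT(\clo W))$. However, there are two genuine gaps.

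\textbf{The case $k=2$ is not covered.} Your strategy hinges on killing the possible fiber-discrepancy $z^i$, i.e.\ on showing that $\overrightarrow{w}_*$ acts trivially on $H_1(UT(\clo W);\Z)$. But for $k=2$ the hypothesis only gives $\clo w\in\calI(\clo W)$, and the action of a general Torelli element on framings is governed by its Chillingworth class, which is \emph{not} zero in general. So for $k=2$ the integer $i$ need not vanish, and $T_\delta^{\,i}$ (a BSCC twist) is not in $\Gamma_2\calI(W^+)$. The paper circumvents this entirely: for $k=2$ it checks directly (by a Mayer--Vietoris argument) that $w^+\in\calI(W^+)$, whence $[w^+,\Push^+(u)]\in[\calI(W^+),\calI(W^+)]=\Gamma_2\calI(W^+)$ without ever touching the fiber question.

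\textbf{For $k\ge 3$, the framing step is not actually completed.} You write that the $z$-discrepancy is closed ``by exploiting that $\clo w$ is the identity on a disk neighborhood of~$\ast$, which controls the framing data''; and that when $b=2$ the direct product splitting $\pi_1(UT(\clo W))\cong\pi_1(\clo W)\times\Z$ makes it rapid. Neither of these suffices. Every mapping class has a representative fixing a disk around~$\ast$, so that condition carries no information about winding numbers; and in the product decomposition the induced automorphism is of the form $(x,n)\mapsto(\clo w_*(x),\,n+e(x))$ where $e\colon H_1(\clo W)\to\Z$ is the change-of-winding-number homomorphism, which must still be shown to vanish. The paper supplies the missing ingredient: for $k\ge 3$ one has $\clo w\in\calK(\clo W)$, and Johnson \cite{Jo80} proved that the Johnson subgroup acts trivially on homotopy classes of nonsingular vector fields. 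This is exactly the statement $e=0$, and with it the homology projection argument gives $i=0$ immediately. (The closed case $b=1$ is then reduced to the bounded case by lifting $\clo w$ to an element of $\calK(W)$.)

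A minor point in part~(1): you write that $\gamma_\pm$ ``cobound an annular neighborhood of $\gamma$ in $\clo W\subset W^+$'', but $\clo W$ is not a subsurface of $W^+$ (the disk $D$ is replaced by $V$). What is true is that $\gamma_+,\gamma_-,\delta$ cobound a pair of pants in $W\subset W^+$, and since $\delta$ bounds in $W^+$ by hypothesis, $\gamma_\pm$ are homologous there. The paper makes this precise by splitting into the cases $b=1$ and $c=1$.
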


\begin{proof}
(1) By assumption, 
$\Push^+(\operatorname{fiber})=T_\delta \in \calM(W^+)$ 
is the Dehn twist along a bounding simple closed curve and, so,
belongs to $\calI(W^+$).
Thus, it remains to check that, for any
simple closed curve $\gamma \subset \clo{W}$  as in \eqref{eq:Push}, 
we have $\Push(\overrightarrow{\gamma})\in \calI(W^+)$. 
Indeed, if $c=1$,  then $\gamma_+$ and $\gamma_-$ 
cobound a subsurface in $W^+$ (which contains $V$).
Besides, if $b=1$, then (depending on whether $\gamma$ is 
separating or not in $\clo{W}$) 
either each of $\gamma_+$ and $\gamma_-$  bounds a subsurface in $W \subset W^+$, or,
$\gamma_+$ and $\gamma_-$  cobound a subsurface in $W\subset W^+$.\\
(2) For $k=2$, the statement follows from (1).
Indeed it is easily checked with a Mayer--Vietoris argument
that $w^+\in \calI(W^+)$
(by distinguishing the case $b=1$ from the case $c=1$).
Hence we assume in the sequel that $k\geq 3$.

It easily follows from \eqref{eq:Push} that
the $\Push$ map is $\calM(W)$-equivariant in the following sense:
for any $x\in \pi_1\big(UT(\clo{W}),\overrightarrow{\ast}\big)$
and $f\in \calM(W)$, we have
$\Push\big(\overrightarrow{f}_*(x)\big)
= f \Push(x) f^{-1},$
where $\overrightarrow{f}:UT(\clo{W}) \to UT(\clo{W})$ 
is the fiber-bundle map 
induced by the diffeomorphism $\clo{f}:\clo{W} \to \clo{W}$.
Hence we have
\begin{equation} \label{eq:comm}
\big[w^+,\Push^+(u)\big] =
\big(w\, \Push(u)\, w^{-1} \big)^+\, \big(\Push^+(u)\big)^{-1}
= \Push^+\big(\overrightarrow{w}_*(u)\big) \, \big(\Push^+(u)\big)^{-1}.    
\end{equation}
Next, we claim that $U:=u^{-1}\, \overrightarrow{w}_*(u)$
belongs to $\Gamma_k \pi_1\big(UT(\clo{W}),\overrightarrow{\ast}\big)$.
Then, we conclude from~\eqref{eq:comm}  and statement (1) that
$\big[w^+,\Push^+(u)\big] =  \Push^+\big(U)$
belongs to $\Gamma_{k} \calI(W^+)$.

It now remains to prove the above claim.
By the assumption on $w$, we know that
$u^{-1}\, \overrightarrow{w}_*(u) = \tilde{U} \cdot \hbox{fiber}^i$
for some $\tilde U \in \Gamma_k \pi_1\big(UT(\clo{W}),\overrightarrow{\ast}\big)$ and $i\in \Z$ 
(both depending on $u$).
Projecting this identity onto $H_1\big(UT(\clo{W});\Z\big)$,
we obtain that 
$\overrightarrow{w}_*([u]) = [u] + i\cdot  [\hbox{fiber}]$.
But, $w$  acts trivially on $H_1\big(UT(\clo{W});\Z\big)$,
which implies that $i=0$ and proves the claim.

The triviality of the action of $\clo{w}$  
on $H_1\big(UT(\clo{W});\Z\big)$
can be justified as follows,
by distinguishing the case $b=2$ from $b=1$.
Note that, in both cases, we have $\clo{w} \in \calK(\clo{W})$ by our assumption on $k\geq 3$.
If $b=2$, then $\clo{W}$ has one boundary component,
so that we have a short exact sequence
$$
\xymatrix{
0 \ar[r] & H_1(\hbox{fiber}; \Z) \ar[r] &
H_1\big(UT(\clo{W});\Z\big) \ar[r] & H_1(\clo{W};\Z) \ar[r] & 0
}
$$
which splits by choosing a non-singular vector field on $\clo{W}$;
according to \cite{Jo80}, the Johnson subgroup $\calK(\clo{W})$
acts trivially on homotopy classes of non-singular vector fields;
hence, $\clo{w}$ preserves the above splitting
so that the triviality of its  action on $ H_1(\clo{W};\Z)$
implies the same on $H_1\big(UT(\clo{W});\Z\big)$.
If~{$b=1$}, then $\clo{W}$ is closed and
we can find $w'\in \calK(W)$ such that $\clo{w'}=\clo{w}$;
by the same argument as in the previous case applied now to $W$, 
$w'$ acts trivially on $H_1\big(UT({W});\Z\big)$
and, since $H_1\big(UT(\clo{W});\Z\big)$ 
is a quotient of $H_1\big(UT({W});\Z\big)$,
we deduce that $\clo{w}$ acts trivially on $H_1\big(UT(\clo{W});\Z\big)$.
\end{proof}

We now apply Proposition \ref{prop:push-push} in the following two examples, 
where $W^+$ is our reference surface~$\Sigma$ and we take $k:=3$.

\begin{example} \label{ex:c1}
For the decomposition $W \cup V$ of $\Sigma$ shown below,  take $u:=\overrightarrow{\gamma}$
and $w:=T_d$. Then, we have $\Push(u)=T_{\gamma_+} T_{\gamma_-}^{-1}$
and we deduce that 
$\left[T_d,T_{\gamma_+} T_{\gamma_-}^{-1}\right]\in \Gamma_3 \calI(\Sigma)$.\\
$$
\includegraphics[scale=0.55]{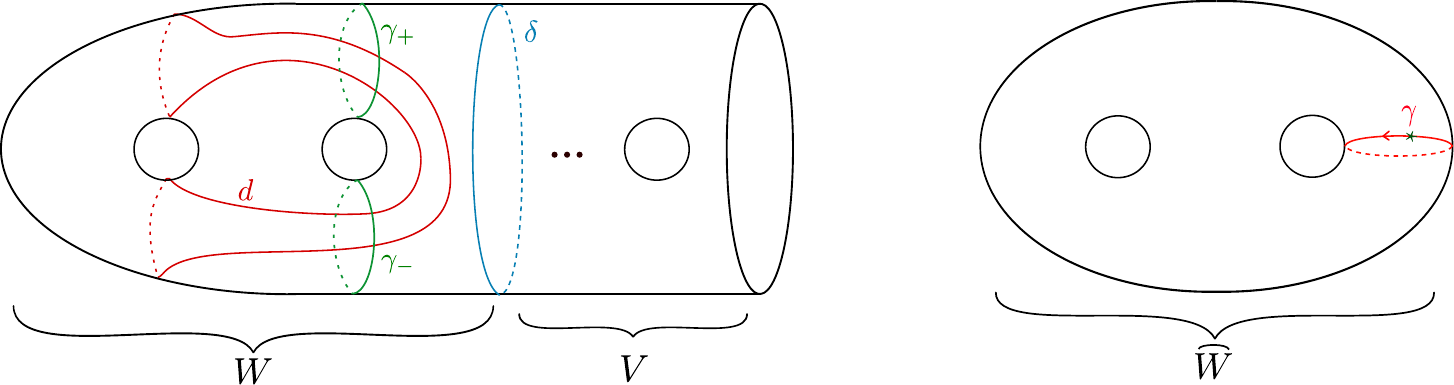} 
$$

\up
\hfill $\blacksquare$
\end{example}

\begin{example}  \label{ex:c2}
For the decomposition $V \cup W$ of $\Sigma$ shown below,  take $u:=\overrightarrow{\gamma}$
and $w:=T_d$. Then, we have $\Push(u)=T_{\gamma_+} T_{\gamma_-}^{-1}$
and we deduce that 
$\left[T_d,T_{\gamma_+} T_{\gamma_-}^{-1}\right]\in \Gamma_3 \calI(\Sigma)$.\\
$$
\includegraphics[scale=0.55]{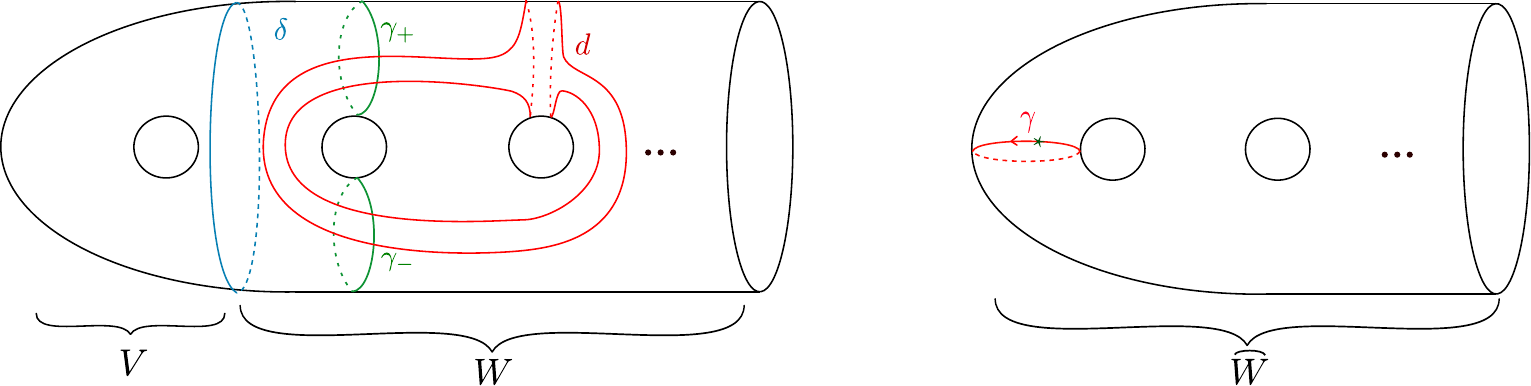} 
$$

\up
\hfill $\blacksquare$
\end{example}

\section{Proof of Theorem~B}

\label{sec:proof_Th_B}

In this section, we define the map $B$ and we prove Theorem~B. 

\subsection{The map $B$}

We embed $\Lambda^4 H^\Q$ into the space $S^2(\Lambda^2 H^\Q)$, 
by sending any $4$-vector $a\wedge b \wedge c \wedge d$
to $(a\wedge b) \cdot (c \wedge d) + (a\wedge c) \cdot (d \wedge b) +
(a\wedge d) \cdot (b \wedge c)$. 
(In the diagrammatic setting that is mentioned below
in Remark \ref{rem:brackets},
this embedding corresponds to the ``IHX'' relations.)
Then we consider the $\Q$-linear map
\begin{equation} \label{eq:B_map}
B=\big(B^{(0)},B^{(2)}\big): \Lambda^2(\Lambda^3 H^\Q) \longrightarrow 
\frac{S^2(\Lambda^2 H^\Q)}{\Lambda^4 H^\Q} \oplus \Q    
\end{equation}
defined by 
\begin{equation} \label{eq:B0}
B^{(0)}\big((x_1\wedge x_2 \wedge x_3)
\wedge (y_1\wedge y_2 \wedge y_3) \big)
= \sum_{i,j\in \Z_3}
\omega(x_i,y_j) \, (x_{i+1} \wedge x_{i+2})
\cdot  (y_{j+1} \wedge y_{j+2}),
\end{equation}
and
\begin{equation} \label{eq:B2}
B^{(2)}\big((x_1\wedge x_2 \wedge x_3)
\wedge (y_1\wedge y_2 \wedge y_3) \big)
= -\frac{1}{4} \begin{vmatrix}
\omega(x_1,y_1) &   \omega(x_1,y_2) & \omega(x_1,y_3)\\
\omega(x_2,y_1) &   \omega(x_2,y_2) & \omega(x_2,y_3)\\
\omega(x_3,y_1) &   \omega(x_3,y_2) & \omega(x_3,y_3)\\
\end{vmatrix}
\end{equation}
We refer to the map $B$ as the \emph{diagrammatic bracket}
since, as specified in the next remark, 
it is also given by  restricting the Lie bracket
of the space of ``symplectic Jacobi diagrams''.

\begin{remark} \label{rem:brackets}
There is a  $\Q$-algebra of ``symplectic Jacobi diagrams'',
denoted by $\calA(H^\Q)$ in \cite{HM09},
whose multiplication is a diagrammatic analogue 
of the Moyal--Weyl product. 
It is a graded Hopf $\Q$-algebra,
whose primitive part $\calA^c(H^\Q)$
consists of \emph{connected} Jacobi diagrams.
It turns out that, in degree $1+1$, its Lie bracket
$$
[-,-]: \Lambda^2 \calA^c_1(H^\Q)  \longrightarrow 
\calA^c_2(H^\Q)
$$
is essentially the above map $B$: 
indeed, according to \cite[Lemma 5.4]{HM09},
the $B^{(0)}$ component corresponds to $0$-looped diagrams (i.e$.$ trees) 
in the target, while the $B^{(2)}$ component corresponds to $2$-looped diagrams 
(i.e$.$ multiples of the theta graph). \hfill $\blacksquare$
\end{remark}

It is proved in \cite[Lemma 5.6 \& (5.5)]{HM09}
that $\ker(B)=R_2^\Q$, i.e$.$ the diagrammatic bracket $B$
and the ``topological'' bracket
$J_2^\Q: \Lie_2(\Lambda^3 H^\Q) 
\to (\Gamma_2\calI/ \Gamma_3 \calI)\otimes \Q$
share the same kernel.
In the sequel, we will just need the inclusion
$R_2^\Q \subset \ker(B)$ which follows also from Proposition \ref{prop:invariants} below.

To state this proposition, we need two ingredients.
On the one hand, we  consider the \emph{second Johnson homomorphism}
$\tau_2:\calK \to {S^2(\Lambda^2 H^\Q)}/{\Lambda^4 H^\Q}$.
According to \cite{Mor89,Mor91}, it is determined by the fact that 
\begin{equation} \label{eq:tau_2}
\tau_2(T_\gamma) 
= \frac{1}{2} \sum_{i,j} (u_i \wedge v_i) \cdot (u_j \wedge v_j) ,
\end{equation}
for any bounding simple closed curve $\gamma$, 
where $(u_1,\dots,u_h,v_1,\dots,v_h)$ is a symplectic basis of
the subsurface of $\Sigma$ bounded  by $\gamma$.

On the other hand, 
we consider the \emph{second core of the Casson invariant} $d'': \calK \to \Q$,
which appeared in Auclair's thesis \cite[Theorem~4.4.6]{Au06}.
This is an alternative to Morita's \emph{core of the Casson invariant} 
$d:\calK \to \Z$ \cite{Mor89,Mor91}. With the knowledge of $\tau_2$,
these two homomorphisms are equivalent one to the other: 
specifically, we have
\begin{equation} \label{eq:d'_d''}
    d=2d'+48d''
\end{equation}
where $d':=-\overline{d'}\circ \tau_2$ and 
$\overline{d'}: S^2(\Lambda^2 H^\Q)/\Lambda^4 H^\Q \to \Q$
is defined by 
$$
\overline{d'}\big((a\wedge b) \cdot (c \wedge d)\big)
= -4 \omega(a,b)\, \omega(c,d) 
-2 \omega(a,c)\, \omega(b,d)  + 2 \omega(a,d)\, \omega(b,c).
$$
Originally,
both homomorphisms are defined from the Casson invariant
and, a posteriori, 
they are determined by the formulas
$$
d(T_\gamma)=4h(h-1) \quad \hbox{and} \quad d''(T_\gamma)=-h/8,
$$
for any bounding simple closed curve $\gamma$,
where $h$ denotes the genus of the subsurface of  $\Sigma$ bounded by $\gamma$.
From our perspectives (see Remark \ref{rem:LMO} below),
we find more natural to work here with $d''$ rather than $d$.

Note that both $\tau_2$ and $d''$ can be restricted to $\Gamma_2 \calI=[\calI,\calI]$,
and they both vanish on $\Gamma_3 \calI$.

\begin{proposition} \label{prop:invariants}
We have $(\tau_2,d'') \circ J_2^\Q = B$.
\end{proposition}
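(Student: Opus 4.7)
The plan is to split the equality $(\tau_2, d'') \circ J_2^\Q = B$ into its two components $B^{(0)}$ and $B^{(2)}$, and check each separately. Both sides are $\Q$-linear and $\Sp(H^\Q)$-equivariant on $\Lambda^2(\Lambda^3 H^\Q) \simeq \Lie_2(\Lambda^3 H^\Q)$, which gives considerable flexibility. Throughout, fix $f, g \in \calI$ with $\tau_1(f) = X := x_1 \wedge x_2 \wedge x_3$ and $\tau_1(g) = Y := y_1 \wedge y_2 \wedge y_3$, so that $J_2^\Q(X \wedge Y)$ is the class of $[f,g]$ in $(\Gamma_2 \calI / \Gamma_3 \calI) \otimes \Q$; it then remains to compute $\tau_2([f,g])$ and $d''([f,g])$ in terms of $X$ and $Y$.

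For the identity $\tau_2 \circ J_2^\Q = B^{(0)}$, I would invoke Morita's Johnson Lie algebra: the graded Lie algebra generated by the images of the $\tau_k$ is a subalgebra of positive-degree symplectic derivations of $\Lie(H^\Q)$, and $\tau_2([f,g])$ equals the Lie bracket of the degree-one derivations attached to $\tau_1(f)$ and $\tau_1(g)$. An element $X = x_1 \wedge x_2 \wedge x_3 \in \Lambda^3 H^\Q$ acts as the derivation $D_X$ sending each $h \in H^\Q$ to $\sum_{i \in \Z_3} \omega(h, x_i) \, [x_{i+1}, x_{i+2}]$. A direct calculation of the commutator $[D_X, D_Y]$ on generators of $H^\Q$, viewed in $H^\Q \otimes \Lambda^2 H^\Q$ and then projected to $S^2(\Lambda^2 H^\Q)/\Lambda^4 H^\Q$ via the IHX-type identification, reproduces exactly the six-term sum defining $B^{(0)}$ in \eqref{eq:B0}.

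For the identity $d'' \circ J_2^\Q = B^{(2)}$, my strategy combines representation theory with a single explicit computation. Both $d'' \circ J_2^\Q$ and $B^{(2)}$ are $\Sp(H^\Q)$-invariant linear forms on $\Lambda^2(\Lambda^3 H^\Q)$, so they are proportional on each isotypic component of its decomposition; in fact they both factor through the trivial $\Sp(H^\Q)$-summand. Morita's classical formula for his core $d$ on commutators of BP-maps expresses $d([f,g])$ as a polynomial in the $\omega$-values between the vectors underlying $\tau_1(f)$ and $\tau_1(g)$. Converting $d$ to $d''$ via \eqref{eq:d'_d''}, and using that the part $d' = -\overline{d'} \circ \tau_2$ is already controlled by $B^{(0)}$ from the previous step, isolates the two-loop contribution and recovers the $3 \times 3$ determinant in \eqref{eq:B2}.

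The main obstacle is not conceptual but rather the bookkeeping of signs and normalizations: the factor $1/2$ in \eqref{eq:tau_2}, the coefficient $-1/4$ in \eqref{eq:B2}, the passage \eqref{eq:d'_d''} between $d$ and $d''$, and the minus sign appearing in \eqref{eq:tau1_beta} for $\tau_1$ of a BP-map. I would pin down each constant by evaluating both sides on a single carefully chosen commutator $[p_1, p_2]$ with $p_i = T_{\varepsilon_i} T_{\delta_i}^{-1}$, whose supporting subsurfaces are placed so that the $\omega$-values between their symplectic bases are $0$ or $\pm 1$ in a controlled pattern. Reading off the resulting numerical identity fixes both normalizations and completes the verification.
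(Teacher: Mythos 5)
Your proposal follows essentially the same route as the paper: the identity $\tau_2\circ J_2^\Q=B^{(0)}$ is obtained from Morita's result that the Johnson homomorphisms assemble into a Lie algebra map (the derivation picture you describe), and the identity $d''\circ J_2^\Q=B^{(2)}$ is deduced from Morita's formula for $d([u,v])$ on a commutator together with the conversion $d=2d'+48d''$ and the fact that $d'=-\overline{d'}\circ\tau_2$ is already controlled by the first step. One caveat: the representation-theoretic shortcut in your third paragraph is not sound as stated, since the trivial isotypic component of $\Lambda^2(\Lambda^3 H^\Q)$ has multiplicity two (coming from the two antisymmetric invariant pairings on $\Lambda^3 H^\Q\simeq V_{[1^3]}\oplus H^\Q$), so two $\Sp(H^\Q)$-invariant linear forms need not be proportional and a single evaluation would not pin one down; fortunately you do not need this, because Morita's commutator formula already determines $d''([f,g])$ for arbitrary $\tau_1(f),\tau_1(g)$ and the remaining verification is the purely algebraic identity relating $\sum_{i,j}\omega(x_i,x_{i+1})\,\omega(y_j,y_{j+1})\,\omega(x_{i+2},y_{j+2})$ to the $3\times 3$ determinant $\det\big(\omega(x_i,y_j)\big)$.
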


\begin{proof}
That $\tau_2 \circ J_2^\Q=B^{(0)}$ is well-known, and follows from the fact 
that the sequence of  all Johnson  homomorphisms~$\tau_k$ ($k\geq 1$)
defines a Lie algebra map on the associated graded of the ``Johnson filtration'' \cite{Mor93}.

That   $d'' \circ J_2^\Q=B^{(2)}$ is proved as follows.
Let $u,v\in \calI$ and assume, without loss of generality, 
that we have
$$
\tau_1(u) = x_1 \wedge x_2 \wedge x_3
\quad \hbox{and} \quad 
\tau_1(v) = y_1 \wedge y_2 \wedge y_3
$$
for some $x_1,x_2,x_3,y_1,y_2,y_3 \in H$.
We wish to prove that
$$
d''([u,v])= -\frac{1}{4} \begin{vmatrix}
\omega(x_1,y_1) &   \omega(x_1,y_2) & \omega(x_1,y_3)\\
\omega(x_2,y_1) &   \omega(x_2,y_2) & \omega(x_2,y_3)\\
\omega(x_3,y_1) &   \omega(x_3,y_2) & \omega(x_3,y_3)\\
\end{vmatrix}.
$$
But this follows easily from \eqref{eq:d'_d''}, the above definition of $d'$
and Morita's formula \cite[Prop. 5.1]{Mor89} which, in our notations,  
states that
$$
d([u,v]) = 8 \sum_{i,j\in \Z_3}
\omega(x_i,x_{i+1})\, \omega(y_j,y_{j+1})\,  \omega(x_{i+2},y_{j+2}).
$$

\up
\end{proof}

\begin{remark} \label{rem:LMO}
Alternatively, one can obtain Proposition \ref{prop:invariants} 
using  the LMO homomorphism 
$Z: \calI \to \calA(H^\Q)$ that has been introduced in \cite{HM09}.
Indeed, $\tau_2$ coincides with the tree part of the degree 2 part $Z_2$ of $Z$ \cite[Lemma~4.3]{MM13},
while $d''$ is exactly 
the coefficient of the theta graph in $Z_2$ \cite[Lemma~7.3]{MM13}.
Then, the proposition follows directly from \cite[(4.5)]{MM13} 
and Remark \ref{rem:brackets}. \hfill $\blacksquare$
\end{remark}

\subsection{Strategy for the proof of Theorem~B}
\label{subsec:strategy}

We are interested in the submodule
$$
K:= \ker(B) \cap \Lambda^2(\Lambda^3 H) 
$$
of $\Lambda^2(\Lambda^3 H)$.
In the previous subsection, we have justified that $K$ contains 
the kernel of $J:~\Lambda^2(\Lambda^3 H) \to \Gamma_2 \calI/\Gamma_3 \calI$. 
To prove Theorem~B, we now need to show that $J(K)=0$.

In the sequel, we use the following notation and  terminology.
We fix a symplectic basis  $S=~\{a_1,\dots,a_g,b_1,\dots,b_g\}$ 
of $H$ as in \eqref{eq:symplectic_basis}.
An \emph{elementary trivector} is an element 
$s_1 \wedge s_2 \wedge s_3 \in \Lambda^3 H$ with $s_1,s_2,s_3 \in S$.
The wedge of two elementary trivectors is denoted by
$$
\Com{s_1}{s_2}{s_3}{s'_1}{s'_2}{s'_3} :=
(s_1 \wedge s_2 \wedge s_3) \wedge (s'_1 \wedge s'_2 \wedge s'_3) 
\ \in \Lambda^2(\Lambda^3 H).
$$

\begin{remark} \label{rem:choice}
A basis of the free module $\Lambda^2(\Lambda^3 H)$
can be  extracted from the list
$$
\Com{s_1}{s_2}{s_3}{s'_1}{s'_2}{s'_3},
\quad \hbox{with }{s_1,s_2,s_3,s'_1,s'_2,s'_3\in S}
$$
\emph{by deleting zeroes 
and fixing signs among repetitions},
which means the following: if the element
$\Com{s_1}{s_2}{s_3}{s'_1}{s'_2}{s'_3}$ is zero,
then  it  is deleted from the list; otherwise, 
among all the repetitions of 
$\pm \Com{s_1}{s_2}{s_3}{s'_1}{s'_2}{s'_3}$ in the above list,
we choose either  $\Com{s_1}{s_2}{s_3}{s'_1}{s'_2}{s'_3}$
or  $-\Com{s_1}{s_2}{s_3}{s'_1}{s'_2}{s'_3}$. 
Thus, such a basis of $\Lambda^2(\Lambda^3 H)$
is  indexed by unordered pairs $\{I,J\}$
of $3$-element subsets $I,J$ of $S$ such that $I\neq J$
(the corresponding element of the basis
being equal to 
$\pm \big({i_1}\wedge {i_2} \wedge {i_3}\big)
\wedge \big( {j_1}\wedge {j_2} \wedge {j_3}\big)$
if $I,J$ write $\{i_1,i_2,i_3\}$, $\{j_1,j_2,j_3\}$ respectively),
and any two choices of such bases differ by some sign changes.
\hfill $\blacksquare$
\end{remark}

We denote by $\ov{\,\cdot\,}$ the involution of $S$
that is defined by $\ov{a_i}=b_i$ and $\ov{b_i}=a_i$.
A \emph{self-contraction} of an elementary trivector
$s_1 \wedge s_2 \wedge s_3$
is a pair $\{s_i,s_j\}$ such that $s_i= \ov{s_j}$. 
A \emph{mixed contraction} between 
two elementary trivectors $s_1 \wedge s_2 \wedge s_3$ and 
$s'_1 \wedge s'_2 \wedge s'_3$ is a pair $\{s_i,s'_j\}$ such that $s'_j = \ov{s_i}$.
There is the decomposition
$$
\Lambda^2(\Lambda^3 H) = 
V_0 \oplus V_1 \oplus V_2 \oplus V_3
$$
where $V_m$ denotes the submodule generated by the elements
$\Com{s_1}{s_2}{s_3}{s'_1}{s'_2}{s'_3}$
with $m$  mixed contractions between the elementary trivectors
$s_1 \wedge s_2 \wedge s_3$ and $s'_1 \wedge s'_2 \wedge s'_3$. 
Furthermore, $V_1$ decomposes itself as
$$
V_1 = V_{1,0} \oplus V_{1,1} \oplus V_{1,2}
$$
where $V_{1,n}$ is the submodule generated by the elements 
$\Com{s_1}{s_2}{s_3}{s'_1}{s'_2}{s'_3}$
with a single mixed contraction between 
$s_1 \wedge s_2 \wedge s_3$ and $s'_1 \wedge s'_2 \wedge s'_3$,
and a total of $n$ self-contractions
which are disjoint from the mixed contraction. 
Therefore, we have
\begin{equation} \label{eq:U_decomposition}
\Lambda^2(\Lambda^3 H) = 
U_0 \oplus U_1 \oplus U_2 \oplus U_3    
\end{equation}
where $U_0:=V_0$, $U_1:=V_{1,0}$, $U_2:= V_{1,1} \oplus V_2$ 
and $U_3:= V_{1,2} \oplus V_3$.
We can extract 
bases of  $V_m$ and $V_{1,n}$
from the above generating sets
by deleting zeroes and fixing signs among repetitions,
as explained in Remark \ref{rem:choice}.

\begin{lemma} \label{lem:K}
We have 
$
K = U_0 \oplus (K \cap U_1) \oplus 
(K \cap U_2) \oplus (K \cap U_3).
$
\end{lemma}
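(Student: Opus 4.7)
The plan is to show that both components of the diagrammatic bracket $B = (B^{(0)}, B^{(2)})$ respect the decomposition \eqref{eq:U_decomposition} in the sense that $B$ sends the summands $U_m$ into pairwise-independent direct summands of its target; vanishing of $B$ on a sum $x = x_0 + x_1 + x_2 + x_3$ with $x_m \in U_m$ will then immediately force $B(x_m) = 0$ for each $m$. First, $U_0 = V_0$ lies in $K$ automatically, since in a generator of $V_0$ every $\omega(s_i, s'_j)$ vanishes, hence so do \eqref{eq:B0} and \eqref{eq:B2}. Second, $B^{(2)}$ is the $3 \times 3$ determinant of $(\omega(x_i, y_j))$ and is zero unless this matrix has a nonzero entry in every row and every column, which requires at least three mixed contractions; so $B^{(2)}$ kills $U_0 \oplus U_1 \oplus U_2$, and only $x_3$ can contribute to $B^{(2)}(x)$.

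The main ingredient is a \emph{pair grading} on the target $S^2(\Lambda^2 H^\Q)/\Lambda^4 H^\Q$ of $B^{(0)}$: the basis element $(u_1 \wedge u_2) \cdot (u_3 \wedge u_4)$ with $u_i \in S$ is assigned pair-degree equal to the maximum number of disjoint $\omega$-dual pairs $\{s, \ov{s}\}$ extractable from the multiset $\{u_1, u_2, u_3, u_4\}$. This descends to the quotient, since the three terms of each Pl\"ucker relation $(a \wedge b) \cdot (c \wedge d) + (a \wedge c) \cdot (d \wedge b) + (a \wedge d) \cdot (b \wedge c)$ share the same underlying multiset and hence the same pair-degree, yielding a direct sum decomposition $S^2(\Lambda^2 H^\Q)/\Lambda^4 H^\Q = E_0 \oplus E_1 \oplus E_2$. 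A direct inspection of \eqref{eq:B0} then shows that every output multiset from a mixed contraction $(i, j)$ of a generator in $V_{1, n}$ has pair-degree exactly $n$, while every output multiset from a generator in $V_m$ ($m = 2, 3$) has pair-degree exactly $m - 1$: the unused mixed contractions and the disjoint self-contractions each contribute an $\omega$-dual pair in the output multiset, non-disjoint self-contractions lose a position to the contraction and thus contribute nothing, and any hypothetical further $\omega$-dual pair in the output would force an additional mixed contraction in the generator, contradicting the defining cardinality of the summand. Consequently $B^{(0)}(U_m) \subseteq E_{m-1}$ for $m \in \{1, 2, 3\}$.

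Putting everything together, for $x = \sum_m x_m \in K$ with $x_m \in U_m$ the equality $B^{(0)}(x_1) + B^{(0)}(x_2) + B^{(0)}(x_3) = 0$ in $E_0 \oplus E_1 \oplus E_2$ forces each $B^{(0)}(x_m) = 0$, and $B^{(2)}(x_3) = B^{(2)}(x) = 0$; combined with $x_0 \in U_0 \subseteq K$, we obtain $x_m \in K$ for every $m$, as required. The main obstacle will be the combinatorial verification of the pair-degree claims in the second paragraph: one must handle uniformly the generators in $V_m$ and $V_{1, n}$ that may additionally carry non-disjoint self-contractions (not excluded by the definitions), and confirm in each case that an extra $\omega$-dual pair in the output multiset can always be traced back to an additional mixed contraction between the two elementary trivectors. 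This boils down to a modest enumeration of the possible combinatorial patterns but needs to be done with some care to avoid missing or double-counting pairs arising from coincidences between elements of the two trivectors.
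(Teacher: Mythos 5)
Your proposal is correct and follows essentially the same route as the paper: a grading on $S^2(\Lambda^2 H^\Q)/\Lambda^4 H^\Q$ by number of (disjoint) $\omega$-dual pairs, compatibility of $B^{(0)}$ with the gradings on source and target, and the observation that $B^{(2)}$ vanishes on $U_0 \oplus U_1 \oplus U_2$. Your "pair-degree" (maximum number of disjoint dual pairs in the multiset) is a more precisely stated version of the paper's $W_r$, and your verification that the grading descends mod $\Lambda^4 H^\Q$ and that $B^{(0)}(U_m) \subseteq E_{m-1}$ is the combinatorial content the paper leaves implicit.
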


\begin{proof}
We start by observing that
$$
\frac{S^2(\Lambda^2 H^\Q)}{\Lambda^4 H^\Q}  = W_0 \oplus W_1 \oplus W_{2} 
$$
where $W_r$ denotes the subspace generated by elements of the form
$(s_1 \wedge s_2)\cdot (s_3 \wedge s_4)$
showing exactly $r$  pairs  $\{s_i,s_j\}$
of the form $\{s,\ov{s}\}$ with $s\in S$. If $r=2$, we require that those two pairs are disjoint.

Note that, trivially, $U_0 \subset K$. Let $k\in K$. By \eqref{eq:U_decomposition}, 
there exist $k_0\in U_0,\dots,k_3\in U_3$
such that $k=k_0+\cdots +k_3$. 
Since $B^{(0)}(U_i)$ is contained in $W_{i-1}$ for $i\geq 1$, 
and since
$$
0= B^{(0)}(k)  = B^{(0)}(k_1)+ B^{(0)}(k_2)+ B^{(0)}(k_3), 
$$
each of $B^{(0)}(k_1), B^{(0)}(k_2)$ and $B^{(0)}(k_3)$ is trivial.
Since $B^{(2)}(k_1)=B^{(2)}(k_2)=0$, it follows that $k_i \in K \cap U_i$
for $i=1,2$. Finally, since 
$
0= B^{(2)}(k) =  B^{(2)}(k_3),
$
we have $k_3 \in K \cap U_3$ as well.
\end{proof}

In the sequel, we shall also consider the subgroup
\begin{equation} \label{eq:G}
G:=G(S) \quad 
\big( \simeq \,  \Z_4^g \rtimes   \mathfrak{S}_g\big)    
\end{equation}
of $\Sp(H)$ that is generated by the transformations
$E_i$ and $F_{ij}$, where
\begin{itemize}
\item $E_i$ maps $(a_i,b_i)$ to $(-b_i,a_i)$ 
and fixes all other elements of $S$,
\item $F_{ij}$ exchanges $(a_i,b_i)$ with $(a_j,b_j)$
and fixes all other elements of $S$.
\end{itemize}
Observe that the $G$-action 
preserves the decomposition  \eqref{eq:U_decomposition}
of  $\Lambda^2(\Lambda^3 H)$, 
as well as the resulting decomposition of $K$ in Lemma \ref{lem:K}.
Then, the proof of Theorem~$B$ splits into two parts:
\begin{itemize}
\item in \S \ref{subsec:generating}, we identify  a $G$-generating system  of $K$ 
by finding a $G$-generating system of $K\cap U_i$ 
for each $i\in \{0,1,2,3\}$;
\item in \S \ref{subsec:vanishing}, we show that $J$ vanishes on this $G$-generating system of $K$.
\end{itemize}

\subsection{A finite generating system of $K$} 
\label{subsec:generating}

Set $\ve(s):=\omega(s,\ov{s})$ for all $s\in S$.
It is straightforward to check that
the following elements of 
$\Lambda^2(\Lambda^3 H)$ 
belong to $K=\ker(B) \cap \Lambda^2(\Lambda^3 H) $:
\begin{eqnarray*}
\Diff{x}{y}{c_1}{c_2}{x'}{y'} &:=&
\ve(c_1)\, \Com{x}{y}{c_1}{\ov{c_1}}{x'}{y'}
-\ve(c_2)\, \Com{x}{y}{c_2}{\ov{c_2}}{x'}{y'}\\
&& \hbox{with
$x,y,c_1,c_2,\ov{x'}, \ov{y'}$ pairwise different;}\\
\Square{x}{y}{p}{q}{r}{s} &:=&
\ve(r) \ve(s)\, \Com{x}{p}{q}{y}{\ov{p}}{\ov{q}}
-  \ve(p) \ve(r)\, \Com{x}{q}{s}{y}{\ov{q}}{\ov{s}} \\[-0.2cm]
&& + \ve(p) \ve(q)\, \Com{x}{r}{s}{y}{\ov{r}}{\ov{s}}
- \ve(q)\ve(s)\, \Com{x}{p}{r}{y}{\ov{p}}{\ov{r}} \\
&& \hbox{with 
$p,q,r,s,x,\ov{y}$ pairwise different};\\
\Triple{x}{p}{q}{r}& := &  
\ve(r)\, \Com{x}{p}{q}{x}{\ov{p}}{\ov{q}} - \ve(q)\, \Com{x}{p}{r}{x}{\ov{p}}{\ov{r}} 
+ \ve(p)\, \Com{x}{q}{\ov{r}}{x}{\ov{q}}{{r}}  \\
& &\hbox{with 
$p,\ov{p},q,\ov{q}, r,\ov{r},x$ pairwise different};\\
\IHXone{s_1}{s_2}{s_3}{s_4}{c}&:=&
\Com{s_1}{s_2}{c}{\ov{c}}{s_3}{s_4} 
+ \Com{s_1}{s_3}{c}{\ov{c}}{s_4}{s_2}
+ \Com{s_1}{s_4}{c}{\ov{c}}{s_2}{s_3}\\
&& \hbox{with 
$c,s_2,\ov{s_2},s_3,\ov{s_3},s_4,\ov{s_4}$ pairwise different}\\
&&\hbox{and $s_1 \not\in \{c,s_2,\ov{s_2},s_3,\ov{s_3},s_4,\ov{s_4}\}$;}\\
\IHXtwo{x}{y}{p}{c} & := &  \ve(p)\, \Com{x}{p}{\ov{p}}{y}{p}{\ov{p}} 
- \ve(c)\, \Com{x}{y}{c}{p}{\ov{p}}{\ov{c}} \\
&& \hbox{with 
$x,\ov{x},y,\ov{y},p,\ov{p}$ pairwise different 
and $c\not \in \{x,y,p,\ov{p}\} $;}\\
\IHXthree{p}{q}{r} & := &  
\ve(r) \, \Com{p}{\ov{p}}{q}{\ov{q}}{r}{\ov{r}} 
-\ve(q) \, \Com{r}{\ov{r}}{p}{\ov{p}}{r}{\ov{r}} \\
&& - \ve(r) \, \Com{q}{\ov{q}}{p}{\ov{p}}{r}{\ov{r}} 
+ \ve(p) \, \Com{r}{\ov{r}}{q}{\ov{q}}{r}{\ov{r}} \\
&& \hbox{with 
$p,\ov{p}, q,\ov{q}, r,\ov{r}$ pairwise different};\\
\IHXthreep{p}{q}{r}{s}&:=& 
\ve(q)\,\Com{p}{r}{s}{\ov{p}}{\ov{r}}{\ov{s}}
- \ve(p)\,\Com{q}{r}{s}{\ov{q}}{\ov{r}}{\ov{s}} 
-\ve(s)\,\Com{\ov{r}}{\ov{p}}{q}{{r}}{{p}}{\ov{q}}\\
&& - \ve(r)\,\Com{s}{\ov{p}}{q}{\ov{s}}{p}{\ov{q}} 
-\ve(r)\, \Com{s}{\ov{s}}{q}{\ov{q}}{p}{\ov{p}}
+\ve(s)\, \Com{q}{\ov{q}}{p}{\ov{p}}{r}{\ov{r}}
\\
&&\hbox{with 
$p,\ov{p}, q,\ov{q}, r,\ov{r},s,\ov{s}$ pairwise different}.
\end{eqnarray*}

\begin{theorem} \label{th:K}
As a $G$-submodule of $\Lambda^2(\Lambda^3 H)$,
$K$ is generated by the following 26 elements:
\begin{itemize}
\item[$(\calR_0)$] 
$\Com{a_1}{a_2}{a_3}{a_4}{a_5}{a_6}$,
$\Com{a_1}{b_1}{a_2}{a_3}{a_4}{a_5}$,
$\Com{a_1}{b_1}{a_2}{a_3}{b_3}{a_4}$,\\
$\Com{a_1}{a_2}{a_3}{a_3}{a_4}{a_5}$,
$\Com{a_1}{b_1}{a_2}{a_2}{a_3}{a_4}$,
$\Com{a_1}{b_1}{a_2}{a_2}{a_3}{b_3}$,\\
$\Com{a_1}{a_2}{a_3}{a_2}{a_3}{a_4}$;
\item[$(\calR_1)$]
$\Diff{a_{1}}{a_{2}}{a_5}{a_{3}}{a_{3}}{a_{4}}$,
$\Diff{a_{1}}{a_{2}}{a_{3}}{a_{4}}{a_{3}}{a_{4}}$,
 $\Diff{a_{1}}{a_{2}}{a_{3}}{b_{1}}{a_{3}}{a_{4}}$,\\
 $\Diff{a_{3}}{a_{1}}{a_4}{a_{2}}{a_{2}}{a_{3}}$,
 $\Diff{a_{3}}{a_{1}}{b_{1}}{a_{2}}{a_{2}}{a_{3}}$, 
 $\Diff{a_1}{a_2}{a_4}{a_3}{a_2}{a_{1}}$,\\
 $\IHXone{a_{1}}{a_{2}}{a_{3}}{a_{4}}{b_{1}}$;
\item[$(\calR_2)$] 
$\Square{a_1}{a_2}{a_4}{a_5}{a_3}{b_3}$,
$\Square{a_1}{a_2}{a_4}{b_4}{a_3}{b_3}$,
$\Square{a_1}{a_2}{a_2}{b_1}{a_3}{b_3}$,\\
$\Square{a_1}{a_2}{a_2}{a_4}{a_3}{b_3}$,
$\Square{a_1}{a_2}{b_1}{a_4}{a_3}{b_3}$,
$\Triple{a_1}{a_2}{a_3}{a_4}$,\\
$\IHXtwo{a_1}{a_2}{a_3}{a_4}$, $\IHXtwo{a_1}{a_2}{a_3}{b_1}$;
\item[$(\calR_3)$] 
$\Diff{a_1}{b_1}{a_2}{b_2}{a_3}{b_3}$,
$\Diff{a_1}{b_1}{a_2}{a_3}{a_4}{b_4}$,
 $\IHXthree{a_1}{a_2}{a_3}$,
  $\IHXthreep{a_1}{a_2}{a_3}{a_4}$.
\end{itemize}
\end{theorem}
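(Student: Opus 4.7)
The plan is to exploit the $G$-invariant decomposition
$K = U_0 \oplus (K \cap U_1) \oplus (K \cap U_2) \oplus (K \cap U_3)$
provided by Lemma~\ref{lem:K} and to exhibit, for each summand, an explicit $G$-generating set matching the corresponding family $\calR_i$. The argument splits into two stages: first, verify that the 26 listed elements all belong to $K$; second, show that each summand $K \cap U_i$ is $G$-spanned by the members of $\calR_i$.

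Membership in $K$ is a direct computation. For $(\calR_0)$ both $B^{(0)}$ and $B^{(2)}$ vanish trivially because no mixed contraction is present. For $(\calR_1)$ and $(\calR_2)$, $B^{(2)}$ vanishes automatically (too few mixed contractions for the $3\times 3$ determinant to be nonzero), while $B^{(0)}$ vanishes either by telescoping cancellation ($\Diff$, $\Square$, $\Triple$) or by the IHX identity in $S^2(\Lambda^2 H^\Q)/\Lambda^4 H^\Q$ arising from the embedding $\Lambda^4 H^\Q \hookrightarrow S^2(\Lambda^2 H^\Q)$ ($\IHXone$, $\IHXtwo$). For $(\calR_3)$ both components of $B$ must be checked; the check for $\IHXthreep$ is the most involved and uses the alternating property of $B^{(2)}$ in both the rows and columns of the $3 \times 3$ determinant.

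The main work is in proving $G$-generation, carried out summand by summand. For $U_0$, every basis vector already lies in $K$, and we only need a set of $G$-orbit representatives. Since $G \simeq \Z_4^g \rtimes \mathfrak{S}_g$ permutes the pairs $\{a_i,b_i\}$ and flips signs within each pair, orbits of the basis vectors $\Com{s_1}{s_2}{s_3}{s'_1}{s'_2}{s'_3}$ with no mixed contraction are classified by the combinatorial shape (how many self-contractions each trivector carries, and which pair-supports are shared across the two trivectors), and a direct enumeration yields exactly the seven types in $(\calR_0)$. For $K \cap U_1$ and $K \cap U_2$, one fixes a ``template'' specifying the combinatorial location of the mixed and self-contractions and realises $B^{(0)}$ as an explicit linear map between finite-rank free modules indexed by such templates; its kernel is then computed by elementary linear algebra. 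The kernel is spanned by two kinds of relations: ``exchange'' relations ($\Diff$, $\Square$, $\Triple$), reflecting that different fillings of the non-contracting slots produce the same $B^{(0)}$-output, together with IHX-type relations ($\IHXone$, $\IHXtwo$) coming from the $\Lambda^4 H^\Q$ quotient. A careful listing of templates up to $G$-orbits then recovers the generators of $\calR_1$ and $\calR_2$.

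The hardest summand is $K \cap U_3$, because $B^{(2)}$ becomes nontrivial on the $V_3$-part and because the splitting $U_3 = V_{1,2} \oplus V_3$ is \emph{not} preserved by $\ker B$; indeed, the relation $\IHXthreep$ has components in both pieces, whereas $\Diff$ and $\IHXthree$ lie purely in $V_{1,2}$. The plan is to first use $G$-translates of $\IHXthreep$ to eliminate the $V_3$-part of an arbitrary element of $K \cap U_3$, after which the residue lies in $K \cap V_{1,2}$ and is handled by the same template method as before to produce the generators $\Diff$ and $\IHXthree$. The main obstacle throughout is the combinatorial bookkeeping needed to pass from a generic kernel element to an explicit $G$-combination of the listed generators without overcounting, especially the tracking of $\ve$-signs from the symplectic form and the sign conventions fixed in Remark~\ref{rem:choice}, and in $U_3$ the simultaneous management of the $B^{(0)}$ and $B^{(2)}$ constraints.
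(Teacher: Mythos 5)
Your treatment of $U_0$, $U_1$ and $U_2$ follows the paper's own route: start from the splitting of Lemma~\ref{lem:K}, decompose each summand into finitely many $B^{(0)}$-stable blocks indexed by the combinatorial type of the non-contracted slots (the paper's $U_{s_1,s_2,s_3,s_4}$ and $U_{s,s'}$), reduce to one block per $G$-orbit, and check by explicit integral linear algebra that $B^{(0)}$ is injective on the quotient by the candidate relations. That part of the plan is sound and is essentially the paper's argument.

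The gap is in $K\cap U_3$. Your pivotal step --- ``first use $G$-translates of $\IHXthreep{p}{q}{r}{s}$ to eliminate the $V_3$-part of an arbitrary element of $K\cap U_3$'' --- presupposes that the projection to $V_3$ of every element of $K\cap U_3$ lies in the $\Z$-span of the $V_3$-components of the $G$-translates of $\IHXthreep{a_1}{a_2}{a_3}{a_4}$. That containment is precisely the hard content of this case, not a preliminary reduction: describing $\pi_{V_3}(K\cap U_3)$ means solving, over $\Z$, the conditions $B^{(2)}(k_3)=0$ and $B^{(0)}(k_3)\in B^{(0)}(V_{1,2})$, where the second is a nontrivial integral lattice condition inside $W_2$, and then matching the answer against the $V_3$-parts of $\IHXthreep$; none of this is carried out or even set up in your sketch. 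The issue is sharpest at $g=3$, where $\IHXthreep$ does not exist (it needs eight pairwise distinct basis vectors), so your plan silently asserts that $K\cap U_3\subset V_{1,2}$ in that case --- true, but only as a consequence of the computation you have skipped. Note also that, unlike $U_1$ and $U_2$, the map $B^{(0)}$ on $U_3$ does not respect a fine block decomposition (a single $u(p,q,r)=\Com{p}{q}{r}{\ov{p}}{\ov{q}}{\ov{r}}$ spreads over several index pairs in $W_2$), which is why the paper abandons the template method here and instead filters $U_3$ and $W_2$ by the genus function $N$, proves injectivity of $\Gr_k B^{(0)}$ modulo the relations for $k\geq 4$, and settles the base case $F_3$ by a separate $10$-generator computation against $F_3W\oplus\Q$. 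To repair your route you would need to supply the integral computation of $\pi_{V_3}(K\cap U_3)$ (including the $g=3$ degeneration) or adopt a filtration of this kind.
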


\noindent
In the above statement, 
it is understood that we only consider
the elements making sense 
in the given genus:
for instance, the element $\Com{a_1}{a_2}{a_3}{a_4}{a_5}{a_6}$
in ($\calR_0)$ has to be removed
in the cases  $g<6$.
(Recall that our permanent assumption is that $g\geq 3$.)

The proof of Theorem~\ref{th:K}, which is given below,
consists in proving that the family ($\calR_i)$ 
is a $G$-generating system of $K\cap U_i$  
for each case $i=0$, $i=1$, $i=2$ and $i=3$.
Actually, as we shall see, 
the proofs are rather different in nature 
from one case to another.

\subsubsection{A $G$-generating system  of $K \cap U_0$.}

By its very definition, the module
$K\cap U_0=U_0=V_0$ is generated by wedge of elementary trivectors
without mixed contractions, i.e$.$ the elements
\begin{equation} \label{eq:no_contraction}
\Com{s_1}{s_2}{s_3}{s'_1}{s'_2}{s'_3} \quad
\hbox{with  $\{\ov{s_1},\ov{s_2},\ov{s_3}\}
\cap \{s'_1,s'_2,s'_3\} =\varnothing$.}
\end{equation}

Let $N:S \to \{1,\dots,g\}$ be the map defined by $N(a_i)=N(b_i)=i$.
We consider an  element  
$\Lambda:= \Com{s_1}{s_2}{s_3}{s'_1}{s'_2}{s'_3}$
of the form \eqref{eq:no_contraction},  and let 
$$
n:= \sharp\ N(\{s_1,s_2,s_3\})\cap N(\{s_1',s_2',s'_3\}).
$$
If $n=0$, then $\pm \Lambda$ is in the $G$-orbit of 
$\Com{a_1}{a_2}{a_3}{a_4}{a_5}{a_6}$,
or $\Com{a_1}{b_1}{a_2}{a_3}{a_4}{a_5}$,
or $\Com{a_1}{b_1}{a_2}{a_3}{b_3}{a_4}$.
If $n=1$, then  
$\pm \Lambda$ is in the $G$-orbit of 
$\Com{a_1}{a_2}{a_3}{a_3}{a_4}{a_5}$
or $\Com{a_1}{b_1}{a_2}{a_2}{a_3}{a_4}$,
or $\Com{a_1}{b_1}{a_2}{a_2}{a_3}{b_3}$.
If $n=2$, then  $\pm \Lambda$ is in the $G$-orbit of 
$\Com{a_1}{a_2}{a_3}{a_2}{a_3}{a_4}$.
Thus, $(\calR_0)$
is a $G$-generating system of $K \cap U_0$.

\subsubsection{A $G$-generating system  of $K \cap U_1$.}

As in the proof of Lemma \ref{lem:K}, 
we consider here the subspace 
$W_0$ of $S^2(\Lambda^2 H^\Q)/\Lambda^4 H^\Q$
generated by the elements $(s_1\wedge s_2)\cdot (s_3\wedge s_4)$
showing no contraction. This decomposes as a direct sum
$$
W_0 = \bigoplus_{\{s_1,s_2,s_3,s_4\}} W_{s_1,s_2,s_3,s_4}
$$
indexed by the unordered quadruplets 
$\{s_1,s_2,s_3,s_4\}\in S^4/\mathfrak{S}_4$ 
showing no contraction and at most 2 repetitions, where 
$W_{s_1,s_2,s_3,s_4}$ is the subspace
generated by $(s_{\sigma(1)}\wedge s_{\sigma(2)})
\cdot (s_{\sigma(3)}\wedge s_{\sigma(4)})$ for 
$\sigma \in \mathfrak{S}_4$.
Similarly, the submodule $U_1=V_{1,0}$ of $\Lambda^2(\Lambda^3 H)$
decomposes as a direct sum 
$$
U_1 = \bigoplus_{\{s_1,s_2,s_3,s_4\}} U_{s_1,s_2,s_3,s_4}
$$
indexed by the same quadruplets,
where we denote by $U_{s_1,s_2,s_3,s_4}$ the submodule generated
by $\Com{s_{\sigma(1)}}{s_{\sigma(2)}}{a_i}
{b_i}{s_{\sigma(3)}}{s_{\sigma(4)}}$ for $i\in\{1,\dots,g\}$
and $\sigma \in \mathfrak{S}_4$.
The map $B^{(0)}: U_1 \to W_0$ preserves the above decompositions,
so that we have 
$$
K\cap U_1 
= \bigoplus_{\{s_1,s_2,s_3,s_4\}} K \cap U_{s_1,s_2,s_3,s_4}.
$$
Moreover, the map $B^{(0)}$ is $G$-equivariant:
so,  a generating system of the $G$-module $K\cap U_1$
is obtained by determining a generating system 
of the module $K \cap U_{s_1,s_2,s_3,s_4}$ for a 
representative $\{s_1,s_2,s_3,s_4\}$ in each $G$-orbit. 
But, an unordered quadruplet  $\{s_1,s_2,s_3,s_4\}$ 
with no contraction and at most $2$ repetitions 
is $G$-equivalent to either
$\{a_1,a_2,a_3,a_4\}$, or  $\{a_1,a_2,a_3,a_3\}$, or 
$\{a_1,a_1,a_2,a_2\}$.

The following three lemmas deal with the above three subcases
in this order and, all together, they prove that $(\calR_1)$
is a $G$-generating system of $K \cap U_1$.

\begin{lemma} \label{lem:a_1,a_2,a_3,a_4}
If $g\geq 4$, the module $K\cap U_{a_1,a_2,a_3,a_4}$
is generated by
\begin{itemize}
\item[(i)] $\Diff{a_{\sigma(1)}}{a_{\sigma(2)}}{a_i}{a_{\sigma(3)}}
{a_{\sigma(3)}}{a_{\sigma(4)}}$
with $i\geq 5$, $\sigma \in \mathfrak{S}_4$,
\item[(ii)] $\Diff{a_{\sigma(1)}}{a_{\sigma(2)}}{a_{\sigma(3)}}
{a_{\sigma(4)}}{a_{\sigma(3)}}{a_{\sigma(4)}}$
with $\sigma \in \mathfrak{S}_4$,
\item[(iii)] $\Diff{a_{\sigma(1)}}{a_{\sigma(2)}}{a_{\sigma(3)}}
{b_{\sigma(1)}}{a_{\sigma(3)}}{a_{\sigma(4)}}$
with $\sigma \in \mathfrak{S}_4$,
\item[(iv)] $\IHXone{a_{\sigma(1)}}{a_{\sigma(2)}}{a_{\sigma(3)}}
{a_{\sigma(4)}}{b_{\sigma(1)}}$
with $\sigma \in \mathfrak{S}_4$.
\end{itemize}
\end{lemma}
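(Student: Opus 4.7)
The plan is to directly compute the restricted map $B^{(0)} \colon U_{a_1,a_2,a_3,a_4} \to W_{a_1,a_2,a_3,a_4}$ and then show that its kernel is $\Z$-generated by the four families (i)--(iv) as $\sigma$ and $i$ vary. Since $B^{(2)}$ vanishes identically on $U_1 = V_{1,0}$, this kernel coincides with $K \cap U_{a_1,a_2,a_3,a_4}$.

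I would first pin down a basis of $U_{a_1,a_2,a_3,a_4}$. Up to the wedge antisymmetries, each generator $\Com{a_{\sigma(1)}}{a_{\sigma(2)}}{a_i}{b_i}{a_{\sigma(3)}}{a_{\sigma(4)}}$ is determined by $i \in \{1,\ldots,g\}$, an unordered partition of $\{1,2,3,4\}$ into two pairs (three choices), and a choice of which pair carries the $a_i$ (two choices). Nonvanishing of the first trivector forces $i \notin \{\sigma(1),\sigma(2)\}$, which produces six basis elements per $i \geq 5$ and three per $i \in \{1,2,3,4\}$ (the latter each carrying a self-contraction in the second trivector which, being non-disjoint from the mixed contraction, still places the element in $V_{1,0}$).

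Applying the defining formula, only $\omega(a_i,b_i) = 1$ contributes, yielding
\[
B^{(0)}\!\big( \Com{a_p}{a_q}{a_i}{b_i}{a_r}{a_s} \big) = (a_p \wedge a_q) \cdot (a_r \wedge a_s)
\]
independently of $i$ (and without correction even when $i \in \{r,s\}$, since all other $\omega$-pairings vanish). The target $W_{a_1,a_2,a_3,a_4}$ is the span of the three partition-classes $(a_p \wedge a_q)(a_r \wedge a_s)$ modulo the single IHX relation induced by $\Lambda^4 H^\Q \hookrightarrow S^2(\Lambda^2 H^\Q)$, so it has $\Q$-dimension $2$. Consequently, the kernel of $B^{(0)}$ is spanned by two kinds of relations: \emph{differences} $\Lambda^{(i,P)} - \Lambda^{(j,P)}$ with the same partition $P$ and $i \neq j$, and \emph{IHX-type} relations combining the three partitions for a single value of $i$.

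I would then identify each kind of relation with the listed generators: the differences between two generic indices $i, j \geq 5$ arise as differences of two (i)-type elements with the same $\sigma$; the differences between a generic $i \geq 5$ and a weird $j \in \{1,2,3,4\}$ are directly of type (i); (ii) bridges two weird indices of the ``$a$-valued'' type, and (iii) handles the variant in which the weird contraction appears at the ``$b$-valued'' end $b_{\sigma(1)}$; finally (iv) supplies the IHX-type relation at the flipped position $c = b_{\sigma(1)}$, from which the generic IHX relation for $i \geq 5$ follows by subtracting suitable instances of (i). The main technical obstacle is the sign and presentation bookkeeping: each basis element admits several equivalent presentations via the wedge antisymmetries and the outer $\Lambda^2$-swap, and the weird cases with self-contractions must be carefully reconciled with the generic ones; once this combinatorial verification is carried out, every kernel element is visibly a $\Z$-linear combination of the listed generators.
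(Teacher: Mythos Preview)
Your proposal is correct and follows essentially the same approach as the paper. Both arguments compute $B^{(0)}$ on the explicit basis $\{e_{J,i}\}$ (your $\Lambda^{(i,P)}$, though note that this notation suppresses the $J$ versus $\bar J$ choice, which is exactly what relation (iii) addresses), observe that the target $W_{a_1,a_2,a_3,a_4}$ is two-dimensional with the IHX relation, and then verify that (i)--(iv) generate the kernel over~$\Z$; the paper organizes this as a step-by-step quotient (by (i), then (ii), then (iii), then (iv), checking freeness and rank at each stage) whereas you describe the kernel first and match afterwards, but the content is the same.
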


\begin{lemma} \label{lem:a_1,a_2,a_3,a_3}
The module $K \cap U_{a_1,a_2,a_3,a_3}$ is generated by
\begin{itemize}
\item[(i)] $\Diff{a_{3}}{a_{\sigma(1)}}{a_i}{a_{\sigma(2)}}
{a_{\sigma(2)}}{a_{3}}$
with $i\geq 4$, $\sigma \in \mathfrak{S}_2$,
\item[(ii)] 
$\Diff{a_{3}}{a_{\sigma(1)}}{b_{\sigma(1)}}{a_{\sigma(2)}}
{a_{\sigma(2)}}{a_{3}}$
with  $\sigma \in \mathfrak{S}_2$.
\end{itemize}
\end{lemma}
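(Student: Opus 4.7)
The plan is to solve a small linear algebra problem: identify a $\Z$-basis of the free module $U_{a_1,a_2,a_3,a_3}$, describe $W_{a_1,a_2,a_3,a_3}$, compute the restriction of $B^{(0)}$ between them, and then extract a generating set of its kernel that matches the elements listed in~(i) and~(ii).

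First I would classify the nonzero elements among the formal generators
$\Com{s_{\sigma(1)}}{s_{\sigma(2)}}{a_i}{b_i}{s_{\sigma(3)}}{s_{\sigma(4)}}$
with $(s_1,s_2,s_3,s_4)=(a_1,a_2,a_3,a_3)$, $i\in\{1,\dots,g\}$ and $\sigma\in\mathfrak{S}_4$. Since two equal entries in a trivector force it to vanish, a routine case analysis on $i$ shows:
\begin{itemize}
\item for $i=3$, every such element vanishes (one of the two trivectors always contains $a_3\wedge a_3$);
\item for $i\in\{1,2\}$, there is, up to sign, a unique nonzero generator, namely $G_i:=\Com{a_{3-i}}{a_3}{a_i}{b_i}{a_i}{a_3}$;
\item for $i\geq 4$, there are exactly two, distinguished by which of $a_1,a_2$ sits in the trivector containing $a_i$: $G_i^{(1)}:=\Com{a_1}{a_3}{a_i}{b_i}{a_2}{a_3}$ and $G_i^{(2)}:=\Com{a_2}{a_3}{a_i}{b_i}{a_1}{a_3}$.
\end{itemize}
These four families correspond to distinct unordered pairs of basis trivectors in $\Lambda^2(\Lambda^3 H)$, so they are linearly independent and $U_{a_1,a_2,a_3,a_3}$ is free of rank $2g-4$.

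On the target side, the only partition of the multiset $\{a_1,a_2,a_3,a_3\}$ into two size-$2$ submultisets that avoids $a_3\wedge a_3$ is $\{a_1,a_3\}\sqcup\{a_2,a_3\}$, so $W_{a_1,a_2,a_3,a_3}\cong\Z$ is generated by $w:=(a_1\wedge a_3)\cdot(a_2\wedge a_3)$. Applying~\eqref{eq:B0} directly, where only the mixed contraction $\omega(a_i,b_i)=1$ contributes (the other four slots carry only $a$'s), each of $G_1,G_2,G_i^{(1)},G_i^{(2)}$ maps to $w$. Hence $B^{(0)}|_{U_{a_1,a_2,a_3,a_3}}$ is surjective, so $K\cap U_{a_1,a_2,a_3,a_3}$ is free of rank $2g-5$ and spanned by pairwise differences of basis elements.

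To finish, I would expand the proposed generators by unfolding the definition of $\mathsf{D}$, using $\ve(a_k)=1$ and $\ve(b_k)=-1$, and normalise each $\Com{\cdot}{\cdot}{\cdot}{\cdot}{\cdot}{\cdot}$ against our basis via the antisymmetries of $\Lambda^2(\Lambda^3 H)$. Up to signs that can be read off directly, one gets
$$
\Diff{a_3}{a_1}{a_i}{a_2}{a_2}{a_3}=-G_i^{(1)}+G_2,\qquad
\Diff{a_3}{a_2}{a_i}{a_1}{a_1}{a_3}=-G_i^{(2)}+G_1\qquad(i\geq 4),
$$
$$
\Diff{a_3}{a_1}{b_1}{a_2}{a_2}{a_3}=-G_1+G_2=-\Diff{a_3}{a_2}{b_2}{a_1}{a_1}{a_3}.
$$
The last identity accounts for the fact that the listed family contains $2g-4$ elements for a kernel of rank $2g-5$. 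These expressions deliver $G_i^{(1)}-G_2$, $G_i^{(2)}-G_1$ for every $i\geq 4$, and $G_1-G_2$; all other pairwise differences follow by $\Z$-linear combinations, so the listed elements generate $K\cap U_{a_1,a_2,a_3,a_3}$. The only real obstacle is this last step of sign bookkeeping, which is combinatorial rather than conceptual.
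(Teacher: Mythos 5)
Your proposal is correct and is essentially the paper's own argument: you use the same basis of $U_{a_1,a_2,a_3,a_3}$ (your $G_1,G_2,G_i^{(1)},G_i^{(2)}$ are the paper's $e_{j,i}$ up to sign), the same one-dimensional space $W_{a_1,a_2,a_3,a_3}$ spanned by $(a_1\wedge a_3)\cdot(a_2\wedge a_3)$, and the same computation of $B^{(0)}$ on the basis. The only difference is presentational: the paper shows that the listed relations collapse $U_{a_1,a_2,a_3,a_3}$ to a rank-one quotient on which $B^{(0)}$ is injective, whereas you equivalently identify $\ker B^{(0)}$ with the rank-$(2g-5)$ module of pairwise differences and check the listed elements realize a generating set of those differences (note only that $W_{a_1,a_2,a_3,a_3}$ is a one-dimensional $\Q$-vector space rather than a copy of $\Z$, which does not affect the argument).
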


\begin{lemma} \label{lem:a_1,a_1,a_2,a_2}
The module $K\cap U_{a_1,a_1,a_2,a_2}$
is generated by  
$\Diff{a_1}{a_2}{a_i} {a_3}{a_2}{a_{1}}$ with $i\geq 4$.
\end{lemma}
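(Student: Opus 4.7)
The plan is to exploit the very constrained nature of the multiset $\{a_1,a_1,a_2,a_2\}$: the module $U_{a_1,a_1,a_2,a_2}$ will turn out to be free of rank $g-2$, and the kernel of $B^{(0)}$ on it can be read off directly from a single one-line computation.

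The first step is to enumerate a basis of $U_{a_1,a_1,a_2,a_2}$. Running $\sigma$ over $\mathfrak{S}_4$ in
$\Com{s_{\sigma(1)}}{s_{\sigma(2)}}{a_i}{b_i}{s_{\sigma(3)}}{s_{\sigma(4)}}$
with $(s_1,s_2,s_3,s_4)=(a_1,a_1,a_2,a_2)$, any splitting of the multiset $\{a_1,a_1,a_2,a_2\}$ that leaves two equal symbols in the same slot produces a vanishing trivector; only the splitting $(\{a_1,a_2\},\{a_1,a_2\})$ survives. Moreover the factor $a_1\wedge a_2\wedge a_i$ is non-zero only for $i\geq 3$. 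Hence, after deleting zeroes and fixing signs among repetitions as in Remark \ref{rem:choice}, $U_{a_1,a_1,a_2,a_2}$ is the free module on the family
$$
e_i := \Com{a_1}{a_2}{a_i}{b_i}{a_1}{a_2}, \qquad i \geq 3.
$$

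The second step is to apply \eqref{eq:B0} to $e_i$. Among the nine pairings $\omega(x_i,y_j)$ between the entries of $a_1\wedge a_2\wedge a_i$ and $b_i\wedge a_1\wedge a_2$, only $\omega(a_i,b_i)=1$ is non-zero, so
$$
B^{(0)}(e_i) = (a_1\wedge a_2)\cdot (a_1\wedge a_2) \ \in\ \frac{S^2(\Lambda^2 H^\Q)}{\Lambda^4 H^\Q},
$$
independently of $i$. Consequently $K \cap U_{a_1,a_1,a_2,a_2}$ is the free submodule generated by the differences $e_i - e_j$, equivalently by the family $e_3 - e_i$ for $i \geq 4$.

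Finally, expanding $\Diff{a_1}{a_2}{a_i}{a_3}{a_2}{a_1}$ with $\varepsilon(a_i)=\varepsilon(a_3)=1$, and absorbing the sign produced by transposing the last two entries of each second trivector, yields $\Diff{a_1}{a_2}{a_i}{a_3}{a_2}{a_1} = e_3 - e_i$, which identifies the claimed family as a generating system. There is no genuine obstacle in this argument: the whole lemma is a direct computation, and the only care required is routine sign bookkeeping under the conventions of Remark \ref{rem:choice}.
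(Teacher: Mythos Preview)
Your proof is correct and follows essentially the same approach as the paper's: both identify the basis $\{e_i\}_{i\geq 3}$ of $U_{a_1,a_1,a_2,a_2}$, compute that $B^{(0)}(e_i)=(a_1\wedge a_2)\cdot(a_1\wedge a_2)$ independently of $i$, and conclude that the kernel is spanned by the differences $e_3-e_i$, which are exactly the $\mathsf{D}$-elements. The only cosmetic difference is that the paper phrases the last step as ``$U/\langle\text{(i)}\rangle$ is free of rank $1$ and injects into $W$'', whereas you read off the kernel directly.
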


\begin{proof}[Proof of Lemma \ref{lem:a_1,a_2,a_3,a_4}]
The module $U:=U_{a_1,a_2,a_3,a_4}$ is free, 
and a basis is obtained
from the following list by deleting zeroes and fixing signs
among repetitions, as explained in Remark \ref{rem:choice}:
\begin{equation} \label{eq:basis}
\Com{a_{\sigma(1)}}{a_{\sigma(2)}}{a_i}
{b_i}{a_{\sigma(3)}}{a_{\sigma(4)}}
\quad \hbox{with }
 \sigma \in \mathfrak{S}_4,  i\in \{1,\dots,g\}.    
\end{equation}
We choose such a basis of $U$ 
and we denote it by $\{e_{J,i}\}_{J,i}$,
which is indexed by $2$-element subsets  $J\subset \{1,2,3,4\}$
together with an element $i \in \{1,\dots, g\}\setminus J$;
specifically, we have 
$e_{J,i}= \pm 
\Com{a_{j}}{a_{j'}}{a_i}{b_i}{a_{k}}{a_{k'}}$
writing $J$ 
and $\overline{J}=\{1,2,3,4\}\setminus J$
as $\{j,j'\}$ and $\{k,k'\}$,
respectively.
Let $U'$ be the submodule 
of  $U$ (freely) generated by the previous elements $e_{J,i}$
for $i\leq 4$
(hence $J\subset \{1,2,3,4\}$ is a $2$-element subset
and $i\in \overline{J}$).
When viewed as a relation in $U$, 
any element of type~(i)
identifies a basis element $e_{J,i}$
with $\pm e_{J,k}$
for  $i\geq 5$, $J\subset \{1,2,3,4\}$ of cardinality $2$
and some $k\in \overline{J}$.
Similarly, 
for any  $J\subset \{1,2,3,4\}$ of cardinality $2$,
writing  $\overline{J}=\{i,k\}$,
the basis element $e_{J,i}$
is identified with $\pm e_{J,k}$
if an appropriate element of type (ii) is viewed as a relation in $U$.
Therefore, the inclusion of $U'$  in $U$ induces an isomorphism 
$$
\frac{U'}{\langle \,\hbox{(ii)}, \hbox{(iii)},
\hbox{(iv)} \, \rangle} \stackrel{\simeq}{\longrightarrow}
\frac{U}
{\langle \, \hbox{(i)},\hbox{(ii)}, \hbox{(iii)},
\hbox{(iv)} \, \rangle}.
$$
 
 So, we can assume for the rest of the proof that $g=4$,
and we  need to show that the map $B^{(0)}$
induces an injection from 
$U/{\langle \, \hbox{(ii)}, \hbox{(iii)}, \hbox{(iv)} \, \rangle}$
to ${S^2(\Lambda^2 H^\Q)}/{\Lambda^4 H^\Q}$.
By the above discussion,
the module
$U/{\langle \, \hbox{(ii)} \, \rangle}$ is free of rank 6,
with basis $\{e_J\}_J$ indexed 
by $2$-element subsets $J$ of $\{1,2,3,4\}$ where
$e_J = \pm
\Com{a_{j}}{a_{j'}}{a_i}{b_i}{a_{i}}{a_{k}}$
writing $J=\{j,j'\}$ and $\overline{J}=\{i,k\}$.
When viewed as a  relation 
in $U/{\langle \, \hbox{(ii)} \, \rangle}$,
any element of type (iii)
identifies a basis element $e_J$ with $\pm e_{\overline{J}}$
for a $2$-element subset $J$ of $\{1,2,3,4\}$.
Consequently, 
the module 
$U/{\langle \, \hbox{(ii)}, \hbox{(iii)}\, \rangle}$
is free of rank $3$, with basis $\{e_P\}_P$
indexed by partitions $P=J \sqcup \overline{J}$ of 
$\{1,2,3,4\}$ into two $2$-element subsets.

On the other hand, the $\Q$-vector space $W:=W_{a_1,a_2,a_3,a_4}$ 
is generated by  
 \begin{equation} \label{eq:family_2}
\big\{(a_1 \wedge a_2) \cdot (a_3 \wedge a_4)\,,\,
(a_1 \wedge a_3) \cdot (a_4 \wedge a_2)\,,\,
(a_1 \wedge a_4) \cdot (a_2 \wedge a_3)\big\}
\end{equation}
with the single relation
\begin{equation} \label{eq:relation}
(a_1 \wedge a_2) \cdot (a_3 \wedge a_4)+ 
(a_1 \wedge a_3) \cdot (a_4 \wedge a_2) +
(a_1 \wedge a_4) \cdot (a_2 \wedge a_3).    
\end{equation}
The map $B^{(0)}:
U/{\langle \, \hbox{(ii)}, \hbox{(iii)} \, \rangle} \to W$
sends the basis element $e_{P}$ to 
$\pm (a_{j} \wedge a_{j'})\cdot (a_{k} \wedge a_{k'})$
writing $P=\{j,j'\}\sqcup \{k,k'\}$. Finally,
when viewed as a relation in 
$U/{\langle \, \hbox{(ii)}, \hbox{(iii)} \, \rangle}$,
any element of type (iv)  goes to \eqref{eq:relation}.
The conclusion follows.
\end{proof}

\begin{proof}[Proof of Lemma \ref{lem:a_1,a_2,a_3,a_3}]
The module $U:=U_{a_1,a_2,a_3,a_3}$ is free, 
and a basis is obtained
from the following list by deleting zeroes and fixing signs
among repetitions, as explained in Remark \ref{rem:choice}:
\begin{eqnarray*} 
\Com{a_{\sigma(1)}}{a_{\sigma(2)}}{a_i}
{b_i}{a_{\sigma(3)}}{a_{\sigma(4)}}
&& \hbox{with }
\{1,2,3,4\} \stackrel{\sigma}{\longrightarrow} \{1,2,3\}
\ \hbox{\small surjective, reaching 3 twice}, \\
&& \hbox{\small and } i\in \{1,\dots,g\}.    
\end{eqnarray*}
We choose such a basis of $U$
and denote it by $\{e_{j,i}\}_{j,i}$,
which is indexed by $j\in \{1,2\}$ and $i\not \in \{j,3\}$;
specifically, we have 
$e_{j,i} = \pm \Com{a_3}{a_j}{a_i}{b_i}{a_{\overline{j}}}{a_3}$
where $\overline{j}$ is the element of $\{1,2\}$ not equal to~$j$.
When viewed as a relation in $U$, 
any element of type (i) identifies a basis element
$e_{j,i}$ with $\pm e_{j,\overline{j}}$
for $i\geq 4$ and $j\in \{1,2\}$,
and any element of type (ii) identifies $e_{1,2}$
with $\pm e_{2,1}$.
It follows that the module
$U/{\langle \, \hbox{(i)}, \hbox{(ii)} \, \rangle}$
is free of rank $1$. 

On the other hand, the $\Q$-vector space $W:=W_{a_1,a_2,a_3,a_3}$
is $1$-dimensional, generated by 
$(a_1\wedge a_3) \cdot (a_2 \wedge a_3)$.
Since $B^{(0)}$ maps $e_{1,2}$ to 
$\pm (a_1\wedge a_3) \cdot (a_2 \wedge a_3)$,
it induces an injection 
$U/{\langle \, \hbox{(i)}, \hbox{(ii)} \, \rangle} \to W$.
\end{proof}

\begin{proof}[Proof of Lemma \ref{lem:a_1,a_1,a_2,a_2}]
The module $U:=U_{a_1,a_1,a_2,a_2}$ is free, 
and a basis is obtained
from the following list by deleting zeroes and fixing signs
among repetitions, as explained in Remark \ref{rem:choice}:
\begin{eqnarray*} 
\Com{a_{\sigma(1)}}{a_{\sigma(2)}}{a_i}
{b_i}{a_{\sigma(3)}}{a_{\sigma(4)}}
&& \hbox{with }
\{1,2,3,4\} \stackrel{\sigma}{\longrightarrow} \{1,2\}
\ \hbox{\small  reaching each of 1 and 2 twice}, \\
&& \hbox{\small and } i\in \{1,\dots,g\}.    
\end{eqnarray*}
We choose such a basis of $U$ and denote it 
by $\{e_i\}_{i\geq 3}$,
having 
$e_i=\pm \Com{a_1}{a_2}{a_i}{b_i}{a_2}{a_1}$.
When viewed as a relation in $U$, any element of type (i)
identifies a basis element $e_i$ with $\pm e_3$ for $i\geq 3$.
It follows that the module
$U/{\langle \, \hbox{(i)} \, \rangle}$
is free of rank $1$. 

Besides, the $\Q$-vector space $W:=W_{a_1,a_1,a_2,a_2}$
is $1$-dimensional, generated by 
$(a_1\wedge a_2) \cdot (a_1 \wedge a_2)$.
Since $B^{(0)}$ maps $e_{3}$ to 
$\pm (a_1\wedge a_2) \cdot (a_1 \wedge a_2)$,
it induces an injection 
$U/{\langle \, \hbox{(i)}  \, \rangle} \to W$.
\end{proof}

\subsubsection{A $G$-generating system of $K \cap U_2$.}

As in the proof of Lemma \ref{lem:K}, 
we consider now the subspace 
$W_1$ of $S^2(\Lambda^2 H^\Q)/\Lambda^4 H^\Q$
generated by the elements $(s_1\wedge s_2)\cdot (s_3\wedge s_4)$
showing a single contraction. This decomposes as a direct sum
$$
W_1 = \bigoplus_{\{s,s'\}} W_{s,s'}
$$
indexed by the unordered pairs $\{s,s'\}\in S^2/\mathfrak{S}_2$
such that $s'\neq \ov{s}$,
where $W_{s,s'}$ denotes the subspace generated
by the elements
$(s \wedge x)\cdot (s'\wedge \ov{x})$
for all $x\in S$.
Similarly, the submodule $U_2=V_{1,1} \oplus V_2$
decomposes as a direct sum
$$
U_2 =  \bigoplus_{\{s,s'\}} U_{s,s'}
$$
indexed by the same pairs, 
where $U_{s,s'}$ denotes the submodule generated by the elements
$\Com{s}{x}{y}{s'}{\ov{x}}{\ov{y}}$ for all $x,y\in S$
and
$\Com{s}{s'}{p}{\ov{p}}{q}{\ov{q}}$
for all $p\in S,q \in S\setminus \{s,s',\ov{s},\ov{s'}\}$.
 The map $B^{(0)}:U_2 \to W_1$
 preserves the above decompositions, so that we have
$$
K \cap U_2 =  \bigoplus_{\{s,s'\}} K \cap U_{s,s'}.
$$
Since $B^{(0)}$ is $G$-equivariant, 
a $G$-generating system of $K\cap U_2$
is obtained by determining a generating system
of $K \cap U_{s,s'}$ for a representative $\{s,s'\}$ 
in each $G$-orbit.
Thus, we are reduced to determine a generating system
of $K\cap U_{a_1,a_2}$ and $K\cap U_{a_1,a_1}$.
This is achieved by the following two lemmas
which prove that $(\calR_2)$
is a $G$-generating system of $K \cap U_2$.

\begin{lemma} \label{lem:a_1,a_2}
The module $K \cap U_{a_1,a_2}$ is generated by
\begin{itemize}
\item[(i)] 
$\Square{a_{\sigma(1)}}{a_{\sigma(2)}}{p}{q}{r}{\ov{r}}$
with $\sigma\in \mathfrak{S}_2$, 
$r\not\in \{p,\ov{p},q,\ov{q},a_1,a_2,b_1,b_2\}$
and $p,q,a_{\sigma(1)},b_{\sigma(2)}$ pairwise different,
\item[(ii)] 
$\IHXtwo{a_{\sigma(1)}}{a_{\sigma(2)}}{p}{q}$
with $\sigma\in \mathfrak{S}_2$, 
$p\not\in \{a_1,a_2,b_1,b_2\}$
and $q \not\in \{ a_{1}, a_{2}, p, \ov{p}\}$.
\end{itemize}
\end{lemma}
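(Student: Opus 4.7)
The approach is to mirror the template of Lemmas~\ref{lem:a_1,a_2,a_3,a_4}--\ref{lem:a_1,a_1,a_2,a_2}, adapted to the fact that $U_{a_1,a_2}$ mixes the two summands $V_{1,1}$ and $V_2$. First, I extract an explicit basis of $U_{a_1,a_2}$ via Remark~\ref{rem:choice}, using the splitting $U_{a_1,a_2} = U_{a_1,a_2}^{(2)} \oplus U_{a_1,a_2}^{(1,1)}$: a basis of $U_{a_1,a_2}^{(2)}$ comes from the family $\Com{a_1}{x}{y}{a_2}{\ov{x}}{\ov{y}}$ indexed by suitable $(x,y) \in S^2$, while a basis of $U_{a_1,a_2}^{(1,1)}$ comes from the family $\Com{a_1}{a_2}{p}{\ov{p}}{q}{\ov{q}}$ indexed by $p\in S$ and $q\in S\setminus\{a_1,a_2,b_1,b_2\}$ (modulo $q\leftrightarrow\ov{q}$), with zeroes deleted and signs fixed in the usual way.

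Second, I interpret the generators (i) and (ii) as linear relations among these basis vectors. Each Square $\Square{a_{\sigma(1)}}{a_{\sigma(2)}}{p}{q}{r}{\ov{r}}$ is supported entirely in $U_{a_1,a_2}^{(2)}$; by varying the free parameter $r$ over admissible indices, these relations are expected to identify, up to sign, every off-diagonal basis vector $\Com{a_1}{x}{y}{a_2}{\ov{x}}{\ov{y}}$ with a ``diagonal'' element $\Com{a_1}{p}{\ov{p}}{a_2}{p}{\ov{p}}$. The $\mathsf{IHX}_2$-type elements in (ii) then express each such diagonal in terms of a $V_{1,1}$-basis vector $\Com{a_1}{a_2}{c}{p}{\ov{p}}{\ov{c}}$. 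Combining the two reductions, $U_{a_1,a_2}/\langle(\mathrm{i}),(\mathrm{ii})\rangle$ becomes a free module supported on a sub-collection of the $V_{1,1}$-basis.

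Third, since $B^{(2)}$ vanishes identically on $U_2$ (as was recorded in the proof of Lemma~\ref{lem:K}), it suffices to check that the induced map $B^{(0)} : U_{a_1,a_2}/\langle(\mathrm{i}),(\mathrm{ii})\rangle \to W_{a_1,a_2}$ is injective. A direct evaluation of \eqref{eq:B0} on a $V_{1,1}$-basis vector $\Com{a_1}{a_2}{p}{\ov{p}}{q}{\ov{q}}$ produces a non-zero scalar multiple of $(a_1\wedge q)\cdot(a_2\wedge \ov{q})$, and once the case analysis is organized so that distinct surviving basis vectors of the quotient land on distinct elements of the basis $\{(a_1\wedge x)\cdot(a_2\wedge \ov{x}) : x\in S\}$ of $W_{a_1,a_2}$, injectivity follows.

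The main obstacle will be the combinatorial bookkeeping in the second step: the Square relations come in several flavors depending on how the parameters $p, q, r$ interact with $a_1, a_2, b_1, b_2$, and reducing off-diagonal elements in which $x$ or $y$ coincides with $b_1$ or $b_2$ requires combining several distinct Squares. Sign management under the choices of Remark~\ref{rem:choice} is the most error-prone ingredient; matching the rank of the quotient against $\dim W_{a_1,a_2}$ will provide a useful consistency check at the end of the argument.
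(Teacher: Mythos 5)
Your overall template (extract a basis of $U_{a_1,a_2}$ respecting the splitting into $V_2$- and $V_{1,1}$-type generators, read (i) and (ii) as relations, reduce to a small generating set of the quotient, then check injectivity of the induced $B^{(0)}$ into $W_{a_1,a_2}$) is exactly the paper's, and your observation that $B^{(2)}$ plays no role on $U_2$ is correct. But the reduction you claim in the second step is false, and the argument collapses there. The Square relations do \emph{not} identify every off-diagonal basis vector $\Com{a_1}{x}{y}{a_2}{\ov{x}}{\ov{y}}$ with a diagonal $\Com{a_1}{p}{\ov{p}}{a_2}{p}{\ov{p}}$, and the quotient $U_{a_1,a_2}/\langle(\mathrm{i}),(\mathrm{ii})\rangle$ is \emph{not} supported on $V_{1,1}$-type vectors. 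Your own suggested consistency check exposes this: $\dim W_{a_1,a_2}=2g-2$, with basis $w(x)=(a_1\wedge x)\cdot(a_2\wedge\ov{x})$ for $x\in S\setminus\{a_1,b_2\}$, whereas the images under $B^{(0)}$ of the diagonals and of the $V_{1,1}$-vectors all lie in the $(g-2)$-dimensional subspace spanned by the differences $w(p)-w(\ov{p})$ (indeed $B^{(0)}\big(\Com{a_1}{a_2}{p}{\ov{p}}{q}{\ov{q}}\big)=\ve(p)\,(w(q)-w(\ov{q}))$ after rewriting $(a_1\wedge a_2)\cdot(q\wedge\ov{q})$ modulo $\Lambda^4H^\Q$ — note this is a difference of two basis vectors of $W_{a_1,a_2}$, not a multiple of a single one as you assert in your third step). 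Since the relations (i),(ii) lie in $\ker B^{(0)}$, your reduction would force, e.g., $B^{(0)}\big(\Com{a_1}{a_2}{a_3}{a_2}{b_2}{b_3}\big)=w(a_2)+w(a_3)$ into that small subspace, which is absurd. Equivalently, if your reduction were valid, the module generated by (i) and (ii) would be strictly larger than $K\cap U_{a_1,a_2}$, contradicting the (easily checked) fact that all these elements lie in $K$.

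What is actually true — and what the paper proves — is that the quotient is free of rank $2g-2$, generated by the single diagonal $u(a_3,b_3)=\Com{a_1}{a_3}{b_3}{a_2}{b_3}{a_3}$ together with the $2g-3$ genuinely off-diagonal elements $u(a_2,p)=\Com{a_1}{a_2}{p}{a_2}{b_2}{\ov{p}}$ for $p\in S\setminus\{a_1,a_2,b_2\}$. The relations (ii) are used in the direction opposite to yours: they eliminate all the $V_{1,1}$-generators $\Com{a_1}{a_2}{q}{\ov{q}}{p}{\ov{p}}$ in favor of the diagonals $u(p,\ov{p})$, and then a chain of carefully chosen Squares (several of which must be combined, as you anticipated) expresses every remaining $u(x,y)$ in terms of the $2g-2$ generators above. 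Their images $-w(a_3)+w(b_3)$ and $\ve(p)\,w(a_2)+w(p)$ are linearly independent in $W_{a_1,a_2}$, which gives the required injectivity. You would need to redo your second and third steps along these lines for the proof to go through.
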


\begin{lemma} \label{lem:a_1,a_1}
The module $K \cap U_{a_1,a_1}$ is generated by
\begin{itemize}
\item[$(*)$] $\Triple{a_1}{p}{q}{r}$
with $p,q,r \not\in \{a_1,b_1\}$ and
$p,\ov{p},q,\ov{q},r,\ov{r}$ pairwise different.
\end{itemize}
\end{lemma}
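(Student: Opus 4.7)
The plan is to reduce the statement to a concrete piece of linear algebra over $\Z$.

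\emph{Step 1 (Set-up).} First, $B^{(2)}$ vanishes identically on $U_{a_1, a_1}$: every generator has the form $\Com{a_1}{x}{y}{a_1}{\ov{x}}{\ov{y}}$ with $x,y \notin \{a_1, b_1\}$, so in formula \eqref{eq:B2} the entire first row and first column of the relevant $3 \times 3$ $\omega$-matrix vanish, and therefore $K \cap U_{a_1, a_1} = \ker(B^{(0)}|_{U_{a_1, a_1}})$. Setting $I := \{2, \dots, g\}$, I would take as basis of $U_{a_1, a_1}$ the elements
\[
\phi_{ij}^{++} := \Com{a_1}{a_i}{a_j}{a_1}{b_i}{b_j}, \qquad \phi_{ij}^{+-} := \Com{a_1}{a_i}{b_j}{a_1}{b_i}{a_j},
\]
for $2 \leq i < j \leq g$, and write $w_i := (a_1 \wedge a_i) \cdot (a_1 \wedge b_i)$ for $i \in I$, which span $W_{a_1,a_1}$ freely of rank $g-1$. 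A direct computation from \eqref{eq:B0} shows $B^{(0)}(\phi_{ij}^{++}) = w_i + w_j$ and $B^{(0)}(\phi_{ij}^{+-}) = w_j - w_i$, so that the image of $B^{(0)}|_{U_{a_1, a_1}}$ is the even-sum sublattice $L := \{\sum_{i \in I} n_i w_i : \sum_i n_i \in 2\Z\}$, free of rank $g-1$ and of index $2$ in $\bigoplus_{i \in I} \Z w_i$. For $g = 3$ the map $B^{(0)}$ is already injective and no admissible $\Triple{a_1}{p}{q}{r}$ exists, so both sides of the claim vanish; from now on I may assume $g \geq 4$.

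\emph{Step 2 (Reduction modulo Triples).} Next, a short calculation yields three key families of Triple identities, for $2 \leq i < j < k \leq g$:
\begin{align*}
\Triple{a_1}{a_i}{a_j}{a_k} &= \phi_{ij}^{++} - \phi_{ik}^{++} + \phi_{jk}^{+-}, \\
\Triple{a_1}{a_j}{a_i}{a_k} &= \phi_{ij}^{++} - \phi_{jk}^{++} + \phi_{ik}^{+-}, \\
\Triple{a_1}{a_i}{b_j}{a_k} &= \phi_{ij}^{+-} + \phi_{ik}^{++} - \phi_{jk}^{++}.
\end{align*}
Let $N$ denote the submodule of $U_{a_1, a_1}$ generated by all $\Triple{a_1}{p}{q}{r}$. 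Using the first two relations with $i = 2$, every $\phi_{jk}^{++}$ and $\phi_{jk}^{+-}$ with $3 \leq j < k$ can be expressed, modulo $N$, in terms of $\{\phi_{2j}^{++}\}$ and $\{\phi_{2k}^{+-}\}$. Comparing, for each $k \geq 4$, the two resulting expressions for $\phi_{3k}^{++}$ — one coming from $\Triple{a_1}{a_3}{a_2}{a_k}$, the other from $\Triple{a_1}{a_2}{b_3}{a_k}$ — then produces the telescoping congruence
\[
\phi_{2k}^{+-} \equiv \phi_{2k}^{++} + \phi_{23}^{+-} - \phi_{23}^{++} \pmod{N}.
\]
It follows that the quotient $U_{a_1, a_1}/N$ is generated, as a $\Z$-module, by the $g-1$ classes of $\{\phi_{2j}^{++}\}_{j=3}^g \cup \{\phi_{23}^{+-}\}$.

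\emph{Step 3 (Conclusion).} Under $B^{(0)}$, these $g-1$ generators map to $\{w_2 + w_j\}_{j=3}^g \cup \{w_3 - w_2\} \subset L$. Writing these as vectors in the basis $(w_2, \dots, w_g)$, the corresponding $(g-1) \times (g-1)$ integer matrix has determinant $\pm 2$, which matches the index $[\bigoplus_{i \in I} \Z w_i : L]$; hence the sublattice they span is exactly $L$ and they form a $\Z$-basis of it. The composition $\Z^{g-1} \twoheadrightarrow U_{a_1, a_1}/N \to L$ sending the standard basis to the chosen generators and then to their $B^{(0)}$-images is therefore a $\Z$-module isomorphism, which forces each intermediate arrow to be an isomorphism as well. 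In particular $\ker(B^{(0)}|_{U_{a_1, a_1}}) = N$, which is the content of Lemma~\ref{lem:a_1,a_1}.

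The main obstacle is precisely the \emph{integral} character of the reduction in Step 2. Rationally, surjectivity of $N \to \ker(B^{(0)})$ would follow from a dimension count on $\Sp$-representations; but to obtain a $\Z$-generating set of the kernel one needs the sharp telescoping identity that collapses the $g-2$ elements $\phi_{24}^{+-}, \dots, \phi_{2g}^{+-}$ onto the single extra generator $\phi_{23}^{+-}$, and this depends on the integral compatibility of the three specific Triple relations above rather than on some isolated arithmetic accident.
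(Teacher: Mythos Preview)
Your proof is correct and follows essentially the same route as the paper: both arguments reduce $U_{a_1,a_1}/N$ to the $g-1$ generators $u(a_2,q)$ for $q\in\{a_3,\dots,a_g,b_3\}$ (your $\phi_{2j}^{++}$ and $\phi_{23}^{+-}$), and then check that $B^{(0)}$ is injective on these. The only cosmetic difference is that you compute the determinant $\pm 2$ and match it to the index of the image lattice, whereas the paper simply notes that the images are $\Q$-linearly independent; both yield injectivity of $U_{a_1,a_1}/N \to W_{a_1,a_1}$ for the same reason.
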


\begin{proof}[Proof of Lemma \ref{lem:a_1,a_2}]
The module $U:=U_{a_1,a_2}$ is free of basis
 $\{u(x,y)\}_{x,y} \sqcup \{u'(q,p)\}_{q,p}$ where
\begin{equation} \label{eq:basis_u}
u(x,y) := \Com{a_1}{x}{y}{a_2}{\ov{x}}{\ov{y}}
\hbox{}
\end{equation}
is indexed by 2-element subsets $\{x,y\}$ 
of $S\setminus \{a_1,b_2\}$, and 
\begin{equation} \label{eq:basis_up}
u'(q,p) := \Com{a_1}{a_2}{q}{\ov{q}}{p}{\ov{p}}
\end{equation}
is indexed by pairs $(q,p)$ 
with $q\in S\setminus\{a_1,a_2\}$
and $p \in S\setminus\{a_1,a_2,b_1,b_2, q,\ov{q}\}$.
We claim that the quotient module 
$U/{\langle \, \hbox{(i), (ii)}  \, \rangle}$
is generated by 
\begin{equation} \label{eq:gen_U}
 u(a_3,b_3) \quad \hbox{and} \quad
u(a_2,p) \hbox{ for $p\in S \setminus \{a_1,a_2,b_2\}$}.
\end{equation}

We start by observing that any element $u'(q,p)$ 
of type  \eqref{eq:basis_up} is, modulo (ii),
equal to the element $\pm u(p,\ov{p})$ of type \eqref{eq:basis_u}.
Indeed, we have 
$$
\IHXtwo{a_1}{a_2}{p}{q}=
-\ve(p)\, u(p,\ov{p}) - \ve(q)\, u'(q,p).
$$
Thus, to prove the above claim, it is enough to show
 that the elements \eqref{eq:basis_u} 
not among the elements \eqref{eq:gen_U} 
can be written in terms of the latter using the relations (i).
For any $r\in S \setminus\{a_1,a_2,a_3,b_1,b_2,b_3\}$, 
we have the following elements of type (i):
$$
\Square{a_{1}}{a_{2}}{r}{a_2}{a_3}{b_3}
=- u(r,a_2) -\ve(r)\, u(a_2,b_3) 
+ \ve(r)\,  u(a_3,b_3) +  u(r,a_3)
$$
$$
\Square{a_{1}}{a_{2}}{r}{a_2}{b_3}{a_3}
= - u(r,a_2) + \ve(r)\, u(a_2,a_3) 
+ \ve(r)\, u(b_3,a_3) - u(r,b_3)
$$
$$
\Square{a_{1}}{a_{2}}{a_3}{b_3}{r}{\ov{r}}
= - u(a_3,b_3) - \ve(r)\, u(b_3,\ov{r})
- u(r,\ov{r}) - \ve(r)\, u(a_3,r)
$$
We deduce that, modulo (i), $u(r,\ov{r})$ 
is a linear combination of \eqref{eq:gen_U}
for $r\in S \setminus\{a_1,a_2,b_1,b_2\}$.
Next, for any such $r$ and $p\in S \setminus \{a_1,a_2,b_2, r,\ov{r}\} $, considering 
$$
\Square{a_{1}}{a_{2}}{p}{a_2}{r}{\ov{r}}
= - u(p,a_2) - \ve(p)\ve(r)\, u(a_2,\ov{r}) 
+ \ve(p)\, u(r,\ov{r}) +\ve(r)\,  u(p,r)
$$
shows that $u(p,r)$
is also modulo (i) a linear combination of \eqref{eq:gen_U}.
This proves the above claim.

The $\Q$-vector space $W:=W_{a_1,a_2}$ has a basis $\{w(x)\}_x$ defined by
$w(x) := (a_1\wedge x )\cdot (a_2\wedge \ov{x})$ for $x\in S \setminus \{a_1,b_2\}$.
The map  $B^{(0)}: U/{\langle \, \hbox{(i)},\hbox{(ii)}  \, \rangle} \to W$ 
sends $u(x,y)$ to $\ve(x)\, w(y)+ \ve(y)\, w(x)$. 
Hence, the images of the generators \eqref{eq:gen_U} are
$$
-w(a_3) + w(b_3), \quad 
\ve(p)\, w(a_2)+ w(p)  \
\hbox{ for $p\in S \setminus \{a_1,a_2,b_2\}$},
$$
and they are easily checked to be $\Q$-linearly independent.
It follows that the map  
$B^{(0)}: U/{\langle \, \hbox{(i)}, \hbox{(ii)}  \, \rangle} \to W$ 
is injective.
\end{proof}

\begin{proof}[Proof of Lemma \ref{lem:a_1,a_1}]
The module $U:=U_{a_1,a_1}$ is free, and a basis
is obtained from the following list
by deleting zeroes and 
fixing signs among repetitions: 
\begin{equation} \label{eq:basis_u*}
 \Com{a_1}{x}{y}{a_1}{\ov{x}}{\ov{y}}
\quad \hbox{with $x,y \in S\setminus \{a_1,b_1\}$}
\end{equation}
Thus, we get a basis $\{u(x,y)\}_{x,y}$ indexed 
by  unordered pairs $\{x,y\}\subset S\setminus\{a_1,b_1\}$
such that $x\neq \ov{y}$, 
up to the involution 
$\{x,y\} \mapsto \{\ov{x},\ov{y}\}$, 
and where $u(x,y)= \pm \Com{a_1}{x}{y}{a_1}{\ov{x}}{\ov{y}}$.

For any $q,r\in \{a_3,\dots,a_g,b_3,\dots,b_g\}$
with $q\neq \ov{r}$,
we have the following element of type $(*)$:
$$
\Triple{a_1}{a_2}{q}{r} = 
\pm u(a_2,q) \pm u(a_2,r) \pm u(q,\ov{r}).
$$
Hence, $U/\langle \,(*)\, \rangle$ is generated
by the elements $u(x,y)$ such that the pair
$\{x,y\}$ contains $a_2$ or $b_2$.
Furthermore, for any  $r\in \{b_4,\dots,b_g\}$, the elements
$$
\Triple{a_1}{a_2}{b_3}{r} 
= \pm u(a_2,b_3) \pm u(a_2,r) \pm u(b_3,\ov{r})
$$
$$
\Triple{a_1}{\ov{r}}{a_2}{b_3} 
= \pm u(\ov{r},a_2) \pm u(\ov{r},b_3) \pm u(a_2,a_3)
$$
show that, modulo $(*)$, $ u(a_2,r)$ is a linear combination of 
\begin{equation} \label{eq:gen_U*}
u(a_2,q) \quad  \hbox{ for } q\in \{a_3,a_4,\dots,a_g,b_3\}.
\end{equation}
Hence the quotient module 
$U/\langle \,(*)\, \rangle$ 
is generated by \eqref{eq:gen_U*}.

The $\Q$-vector space $W:=W_{a_1,a_1}$ has the basis 
$\{w(x)\}_x$ defined by
$w(x) := (a_1\wedge x )\cdot (a_1\wedge \ov{x})$ 
and indexed by  the elements $x\in S \setminus \{a_1,b_1\}$,
up to the involution $x\mapsto \ov{x}$.
The map  $B^{(0)}: U/{\langle \, (*) \, \rangle} \to W$ 
sends $u(x,y)$ to $\pm \big(\ve(x)\, w(y)+ \ve(y)\, w(x)\big)$. 
Hence, the images of the generators \eqref{eq:gen_U*} are
$$
\pm\big (\ve(q)\, w(a_2)+ w(q) \big)  \
\hbox{ for } q\in \{a_3,a_4, \dots,a_g,b_3\}.
$$
Those vectors being  linearly independent, the map 
$B^{(0)}: U/{\langle \, (*) \, \rangle} \to W$  is injective.
\end{proof}

\subsubsection{A $G$-generating system  of $K \cap U_3$.}

As in the proof of Lemma \ref{lem:K}, 
we finally consider the subspace 
$W_2$ of $S^2(\Lambda^2 H^\Q)/\Lambda^4 H^\Q$
generated by the elements $(s_1\wedge s_2)\cdot (s_3\wedge s_4)$
with $s_i\in S$, showing two (disjoint) contractions. We are interested in the kernel $K \cap U_3$ of the map
$$
B=\big(B^{(0)},B^{(2)}\big):U_3 \longrightarrow W_2 \oplus \Q
$$
defined on the module $U_3 = V_{1,2} \oplus V_3$.
Note that, in contrast with the previous cases, 
the component $B^{(2)}$ of $B$ plays a role here, although minor.

The next lemma proves that 
$(\calR_3)$ is a $G$-generating system of $K \cap U_3$.

\begin{lemma} \label{lem:K,U_3}
The module $K\cap U_3$ is generated by
\begin{itemize}
\item [(i)] $\Diff{x}{\ov{x}}{c}{\ov{c}}{y}{\ov{y}}$
with $x,\ov{x},y,\ov{y},c,\ov{c}$ pairwise different,
\item [(ii)] $\Diff{x}{\ov{x}}{c_1}{c_2}{y}{\ov{y}}$
with $x,\ov{x},y,\ov{y},c_1,\ov{c_1},c_2,\ov{c_2}$ pairwise different,
\item [(iii)] $\IHXthree{p}{q}{r}$
with $p,\ov{p},q,\ov{q},r,\ov{r}$ pairwise different,
\item [(iv)] $\IHXthreep{p}{q}{r}{s}$
with $p,\ov{p},q,\ov{q},r,\ov{r},s,\ov{s}$ pairwise different.
\end{itemize}
\end{lemma}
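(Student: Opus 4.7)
The plan is to imitate the strategy already used for $K \cap U_0$, $K \cap U_1$, and $K \cap U_2$: decompose the target $W_2 \oplus \Q$ and the source $U_3 = V_{1,2} \oplus V_3$ into $G$-invariant pieces, reduce to representatives in each $G$-orbit, and for each representative verify by linear algebra that the claimed relations exhaust the kernel. First I would decompose $W_2 = \bigoplus_{\{i,j\}} W_{\{i,j\}}$ according to the unordered pair of distinct indices $\{i,j\}$ carrying the two disjoint contractions, together with a parallel decomposition $V_{1,2} = \bigoplus_{\{i,j\}} V_{1,2;\{i,j\}}$ indexed by the two self-contractions, under which $B^{(0)}$ sends $V_{1,2;\{i,j\}}$ into $W_{\{i,j\}}$. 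A direct inspection of~\eqref{eq:B2} shows that $B^{(2)}$ vanishes on $V_{1,2}$: in a typical element $\Com{x}{\ov{x}}{c}{\ov{c}}{y}{\ov{y}}$, the first row of the matrix $(\omega(x_i,y_j))$ has all zero entries (since $x$ is paired with none of $\ov{c}, y, \ov{y}$), so the determinant vanishes; hence the scalar constraint $B^{(2)} = 0$ becomes a condition only on the $V_3$-component of a prospective kernel element.

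By contrast, an element of $V_3$ supported on an index triple $\{i,j,k\}$ is sent by $B^{(0)}$ into a sum of three pieces lying in $W_{\{i,j\}} \oplus W_{\{i,k\}} \oplus W_{\{j,k\}}$, and it typically has non-zero $B^{(2)}$, so $V_3$ does not split cleanly along pairs. The IHX$_3$ relation (iii) and the IHX$'_3$ relation (iv) are precisely the identities expressing $V_3$-basis elements as linear combinations of $V_{1,2}$-elements whose $B^{(0)}$-images cancel across the three pair-components and whose $B^{(2)}$-contributions cancel out. I would use these to show that, modulo the $G$-orbits of (iii) and (iv), the kernel $\ker(B) \cap U_3$ reduces to $\bigoplus_{\{i,j\}} \ker(B^{(0)}) \cap V_{1,2;\{i,j\}}$. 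Inside each $V_{1,2;\{i,j\}}$, the analysis becomes a finite-dimensional check analogous to the proofs of Lemmas~\ref{lem:a_1,a_2} and~\ref{lem:a_1,a_1}: I would pick a basis indexed by the mixed-contraction pair $\{c,\ov{c}\}$ or $\{c_1,c_2\}$, observe that the relations of types (i) and (ii) identify all such choices for fixed $\{i,j\}$, and check that the quotient injects into $W_{\{i,j\}}$ via $B^{(0)}$.

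The hard part will be the reduction step for $V_3$: showing that the $G$-span of (iii) and (iv), combined with (i) and (ii), is large enough to absorb every $V_3$-contribution to $\ker(B)$ modulo $V_{1,2}$, while simultaneously killing the scalar $B^{(2)}$-value. Because the image of a $V_3$-element spreads over three distinct pair-components at once, the orbit analysis must be organized under the stabilizer of an unordered triple of indices rather than a single pair, which is the most delicate combinatorial point of the argument.
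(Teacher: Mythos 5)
Your preliminary observations check out: $B^{(2)}$ does vanish on $V_{1,2}$ (in $\Com{p}{\ov{p}}{c}{\ov{c}}{q}{\ov{q}}$ the first row of the matrix $(\omega(x_i,y_j))$ is zero), the decomposition $V_{1,2}=\bigoplus_{\{i,j\}}V_{1,2;\{i,j\}}$ by the pair of self-contraction indices is compatible with $B^{(0)}$, and within each piece the relations (i) and (ii) do generate $\ker(B^{(0)})$: the piece is free of rank $2(g-2)$ on the generators $\Com{a_i}{b_i}{r}{\ov{r}}{a_j}{b_j}$, its image is the line spanned by $(a_i\wedge b_i)\cdot(a_j\wedge b_j)$, and (i), (ii) supply a connected graph of differences, hence the $2(g-2)-1$ independent kernel elements. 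The problem is that these are the easy parts, and the entire content of the lemma sits in the step you defer: cancelling the $V_3$-component of an arbitrary element of $K\cap U_3$ by a combination of (iii) and (iv). This requires characterizing the image of $K\cap U_3$ under the projection onto $V_3$, and that is \emph{not} a componentwise problem: a kernel element can have a $V_3$-part and a $V_{1,2}$-part whose $B^{(0)}$-images cancel inside a single $W_{\{i,j\}}$. Indeed $\IHXthree{p}{q}{r}$ is itself such a mixed cancellation --- its term $\ve(r)\,\Com{p}{\ov{p}}{q}{\ov{q}}{r}{\ov{r}}$ lies in $V_{1,2;\{N(p),N(r)\}}$ while the $V_3$-term $-\ve(q)\,\Com{r}{\ov{r}}{p}{\ov{p}}{r}{\ov{r}}$ also has a $B^{(0)}$-component in $W_{\{N(p),N(r)\}}$ --- so the order of operations ``first absorb $V_3$, then treat $V_{1,2}$ pair by pair'' is not self-justifying and must be argued.

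Moreover, the organizing principle you propose for the hard step (orbits under the stabilizer of an unordered triple of indices) does not decouple it. $V_3$ is not spanned only by elements with three distinct indices: it contains generators such as $\Com{a_1}{b_1}{a_3}{b_1}{a_1}{b_3}$ (three mixed contractions, index multiset $\{1,1,3\}$) whose $B^{(0)}$-image does not spread over three distinct pair-components and which the relations (iii), (iv) tie to generators attached to other triples. The paper avoids all of this with a different device: it filters $U_3$ and $W_2$ by the largest basis index occurring, shows for each $k\ge 4$ that the map induced by $B^{(0)}$ on the $k$-th graded quotient modulo (i)--(iv) is injective (after using (ii)--(iv) to whittle the surviving generators involving the new index $k$ down to a short explicit list), and closes the induction with an explicit computation on $10$ generators in filtration level $3$ --- the only place where the $B^{(2)}$-component is actually needed. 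To complete your approach you would still need an induction of this kind, or some other global mechanism, to control the $V_3$-part; the pairwise and triplewise decompositions alone will not produce it.
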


\begin{proof}
We endow $U:=U_3$ and $W:=W_2$
with the following filtrations.
First, observe that the map  $N:S \to \{1,\dots,g\}$  
defined by $N(a_i)=N(b_i)=i$
induces an increasing filtration 
$$
\varnothing = F_0 S \subset F_1 S
\subset \cdots \subset F_{g-1} S \subset F_g S=S
$$
of the set $S$, where
$F_k S$ consists of the $s\in S$ with $N(s)\leq k$.
Thus, we get increasing filtrations
\begin{eqnarray*}
&& \{0\} = F_0U \subset F_1 U
\subset \cdots \subset F_{g-1} U \subset F_g U=U\\
&& \{0\} = F_0W \subset F_1 W
\subset \cdots \subset F_{g-1} W \subset F_g W=W
\end{eqnarray*}
of the module $U$ and the $\Q$-vector space $W$, respectively.
Specifically, for every $k\in\{1,\dots,g\}$,
the module $F_k U$ is generated by the  elements
\begin{equation} \label{eq:uuuu}
u(p,q,r) := \Com{p}{q}{r}{\ov{p}}{\ov{q}}{\ov{r}}
\quad  \hbox{with $p,q,r\in F_k S$ pairwise different},
\end{equation}
and 
\begin{equation} \label{eq:uuuu'}
u'(r;p,q):= \Com{p}{\ov{p}}{r}{\ov{r}}{q}{\ov{q}}
\quad  \hbox{with $p,q,r\in F_k S$ 
and $p,\ov{p},q,\ov{q},r,\ov{r}$ pairwise different}.
\end{equation}
Similarly, for every $k\in\{1,\dots,g\}$,
the $\Q$-vector space $F_kW$
is generated by the elements
$$
w(p,q) := (p \wedge q) \cdot  (\ov{p} \wedge \ov{q})
\quad  \hbox{with $p,q\in F_k S$ and $p\neq q$}.
$$
For future use, we compute the map $B^{(0)}: U \to W$
in terms of those generators:
\begin{eqnarray}
\label{eq:val_1} B^{(0)}\big({u}(p,q,r)\big)
&=& 
\ve(p)\, w(q,r) + \ve(q)\, w(p,r)+ \ve(r)\, w(p,q),
\\
\label{eq:val_2} 
B^{(0)}\big({u'}(r;p,q) \big)
&=&  \ve(r)\, w(p,q)-  \ve(r)\, w(p,\ov{q}).
\end{eqnarray}

Since we have $B^{(0)}(F_kU)\subset F_k W $,
there is a  map 
$\Gr B^{(0)}: \Gr U \to \Gr W$
on the associated graded,
and we set $L:= \ker \big(\Gr B^{(0)}\big)$.
The filtration on $U$ induces a filtration on $K\cap U$, 
by setting $F_k(K \cap U) :=  K \cap (F_k U)$.
By definition of $K$, the inclusion of $K\cap U$ in $U$
induces an injective map
$$
\kappa: \Gr(K \cap U) \longrightarrow L.
$$
Let $I$ be the submodule of $U$
generated by the elements (i),(ii),(iii),(iv) of the statement,
and endow $I$ with the filtration 
defined by $F_k I := I \cap (F_k U)$.
The inclusion of $I$ in $K \cap U$ induces an injective map
$$
\iota: \Gr I \longrightarrow \Gr(K \cap U).
$$
We claim two things:
\begin{eqnarray}
\label{eq:claim_1}&&
\hbox{the composition  
$\kappa_k \circ \iota_k:\Gr_k I \to L_k$ 
is surjective  for $k\geq 4$;}\\
\label{eq:claim_2}&& \hbox{we have $F_3 I =F_3( K\cap U)$.}
\end{eqnarray}
Claim \eqref{eq:claim_1} will imply  that 
$\iota_k$ is surjective for every $k\geq 4$,
from which we  will deduce that $K\cap U=I + F_3(K\cap U)$;
then, with claim \eqref{eq:claim_2},
we will conclude that $K\cap U=I$ 
which will prove the lemma.

We first prove \eqref{eq:claim_1},
fixing $k\in \{4,\dots,g\}$. 
Since the maps $\iota$ and $\kappa$ are injective,
the quotient modules $\Gr_k I=F_kI/F_{k-1} I$ and 
$\Gr_k (K \cap U) = F_k(K\cap U)/F_{k-1} (K \cap U)$
can be viewed as submodules of $\Gr_k U = F_k U/F_{k-1} U$.
Thus, claim \eqref{eq:claim_1} 
is equivalent to the injectivity of 
\begin{equation} \label{eq:Phi}
\Phi:= \Gr_k B^{(0)} : \frac{\Gr_k U}{\Gr_k I} \longrightarrow \Gr_k W.
\end{equation}
The quotient module  $\Gr_k U$
is generated by the  (classes of) elements 
$u(q,t,\ov{t})$, $u(q,r,t)$,   $u'(t;q,r)$, 
$u'(r;q,t)$ with $q,r,t \in S$ such that  
{$N(t)=k$, $N(q)<k$, $N(r)<k$}.
This induces a  generating system 
\begin{eqnarray}
\label{eq:gen_1} && \big\{\,  \overline{u}(q,t,\ov{t})
\ \big\vert \ q,t \in S, \hbox{ $N(t)=k$, $N(q)<k$} \, \big\}\\
\label{eq:gen_2}  &\cup &  \big\{\, \overline{u}(q,\ov{q},t)
\ \big\vert \ q,t \in S,  \hbox{ $N(t)=k$, $N(q)<k$} \, \big\}\\
\label{eq:gen_2'}  &\cup &  \big\{\, \overline{u}(q,r,t)
\ \big\vert \ q,r,t \in S, 
\hbox{ $N(t)=k$, $N(q)<k$, $N(r)<k$, $N(q)\neq N(r)$} \, \big\}\\
\label{eq:gen_3}  &\cup &  \big\{\,  \overline{u'}(t;q,r)
\ \big\vert \ q,r,t \in S, \hbox{ $N(t)=k$, $N(q)<k$, $N(r)<k$,
$N(q)\neq N(r)$} \, \big\}\\
\label{eq:gen_4}  &\cup & 
\big\{\,  \overline{u'}(r;q,t) 
\ \big\vert \ q,r,t \in S, \hbox{ $N(t)=k$, $N(q)<k$, $N(r)<k$,
$N(q)\neq N(r)$}
\, \big\}
\end{eqnarray}
of $\Gr_k U/\Gr_k I$, which we reduce as follows:
\begin{itemize}
\item For any $t,q,r,s\in S$ such that 
$N(t)=k$, $N(q)<k$, $N(r)<k$, $N(s)<k$ 
and $N(q),N(r), N(s)$ are pairwise different,  
the element
$$
\Diff{q}{\ov{q}}{t}{s}{r}{\ov{r}}
=\ve(t)\, u'(t;q,r) - \ve(s)\, u'(s;q,r) 
$$
is of type (ii), which shows that  $\overline{u'}(t;q,r)=0$.
Thus, all the generators of $\Gr_k U/\Gr_k I$ of type \eqref{eq:gen_3} 
are actually zero.
\item For any $t,q,r\in S$ such that 
$N(t)=k$, $N(q)<k$, $N(r)<k$ and $N(q)\neq N(r)$,  
the element
$$
\qquad 
\IHXthree{t}{q}{r} =
\ve(r)\, u'(q;t,r)  + \ve(q)\, u(r,\ov{r},t)
 - \ve(r)\, u'(t;q,r)  - \ve(t)\, u(r,\ov{r},q)
$$
is of type (iii), which (using the previous item) implies that
\begin{equation} \label{eq:uu}
\overline{u'}(q;t,r)
= -\ve(q)\ve(r) \, \overline{u}(r,\ov{r},t).
\end{equation}
Thus, all the generators of type \eqref{eq:gen_4}
are repeated in \eqref{eq:gen_2}, so that they can be removed.
\item For any $t,p,q\in S$ such that $N(t)=k$, $N(p)<k$, $N(q)<k$
and $N(p)\neq N(q)$, the element
$$
\IHXthree{p}{q}{t}
=\ve(t)\,u'(q;p,t) + \ve(q)\, u(t,\ov{t},p)
-\ve(t)\, u'(p;q,t)- \ve(p)\, u(t,\ov{t},q)
$$
is of type (iii),  which (using \eqref{eq:uu}), implies that
$$
 \overline{u}(t,\ov{t},p)=
\ve(t)\ve(p)\, \overline{u}(p,\ov{p},t) 
-\ve(t)\ve(p)\,  \overline{u}(q,\ov{q},t) 
+ \ve(p)\ve(q)\, \overline{u}(t,\ov{t},q).
$$
Thus, setting $q:=a_1$ and $t:=a_k$ in the above equation,
we see that
the submodule of $\Gr_k U/\Gr_k I$  generated by
\eqref{eq:gen_1} and \eqref{eq:gen_2}
is actually generated by the single element $u(a_1,a_k,b_k)$ 
of \eqref{eq:gen_1}  and \eqref{eq:gen_2}.
\item For any $t,q,r,s\in S$ such that 
$N(t)=k$, $N(q)<k$, $N(r)<k$, $N(s)<k$ 
and $N(q),N(r),N(s)$ are pairwise different, the element
\begin{eqnarray*}
\IHXthreep{t}{q}{r}{s}
&= &\ve(q)\, u(t,r,s)- \ve(t)\, u(q,r,s) -\ve(s)\,  u(\ov{r},\ov{t},{q})\\
&& - \ve(r)\, u(s,\ov{t},q) - \ve(r)\, u'(q;s,t) + \ve(s)\, u'(t;q,r)
\end{eqnarray*}
is of type (iv), which (using \eqref{eq:uu}) implies that
\begin{equation} \label{eq:uuu}
\ve(q)\ \overline{u}(t,r,s) +\ve(s)\, \overline{u}({r},{t},\ov{q})
+ \ve(r)\, \overline{u}(\ov{s},t,\ov{q})+
\ve(r)\ve(s)\ve(q)\, \overline{u}(s,\ov{s},t) =0.
\end{equation}
Thus, setting $(t,r):=(a_k,a_1)$ in the above equation, we get
$$
\overline{u}(\ov{s},\ov{q},a_k) =
 -\ve(s)\, \overline{u}(\ov{q},a_1,a_k) 
- \ve(q)\, \overline{u}(s,a_1,a_k) 
- \ve(s)\ve(q)\, \overline{u}(s,\ov{s},a_k),
$$
and, setting $(t,s):=(a_k,a_1)$ in the same equation, we get
$$
 \overline{u}(\ov{q},b_1,a_k)
= -\ve(q)\, \overline{u}(b_1,a_1,a_k) 
-  \ve(r)\, \overline{u}({r},\ov{q},a_k) 
-\ve(q)\ve(r)\, \overline{u}(r,a_1,a_k). 
$$
The above two equations show that, in the presence of \eqref{eq:gen_1} and \eqref{eq:gen_2},
the generators of type \eqref{eq:gen_2'} can be reduced
to those of the form $\overline{u}(q,a_1,a_k)$ with $q\in S$ such that $1<N(q)<k$.
Furthermore, by setting $(q,r,s,t):=(a_j,a_1,a_2,a_k)$ 
in \eqref{eq:uuu} for every $j\in \{3,\dots,k-1\}$, 
we  obtain
$$
\overline{u}(b_j,a_1,a_k)= 
-\overline{u}(a_2,b_2,a_k) - \overline{u}(b_2,a_k,b_j) 
- \overline{u}(a_2,a_1,a_k)
$$
and, 
by setting $(q,r,s,t):=(a_2,a_1,a_j,a_k)$ in \eqref{eq:uuu}, we also get
$$
\overline{u}(b_2,a_k,b_j) = -\overline{u}(a_j,a_1,a_k) - \overline{u}(b_2,a_1,a_k) - \, \overline{u}(a_j,b_j,a_k).
$$
We deduce that, 
in the presence of \eqref{eq:gen_1} and \eqref{eq:gen_2},
the generators of type \eqref{eq:gen_2'}  can be reduced
to those of the form $\overline{u}(b_2,a_1,a_k)$ and
$\overline{u}(a_j,a_1,a_k)$ 
with $j\in \{2,\dots,k-1\}$.
\end{itemize}
It follows from the above discussion
that $\Gr_k U/\Gr_k I$ is generated by the following elements:
$$
\overline{u}(a_1,a_k,b_k), \quad 
\overline{u}(a_i,b_i,a_k) \hbox{ with $i<k$}, \quad
\overline{u}(b_2,a_1,a_k), \quad 
\overline{u}(a_j,a_1,a_k) \hbox{ with $1<j<k$}.
$$
For $p,q\in S$ with $N(p)\leq k, N(q)\leq k$,
let $\overline{w}(p,q)$ be the class
of $w(p,q)$ in $\Gr_k W$.
According to~\eqref{eq:val_1}, the values of 
the map \eqref{eq:Phi} on this generating set 
are as follows:
\begin{eqnarray*}
\Phi\big( \overline{u}(a_1,a_k,b_k)\big)
&=& \overline{w}(a_k,b_k) + \overline{w}(a_1,b_k) 
- \overline{w}(a_1,a_k),\\
\Phi\big( \overline{u}(a_i,b_i,a_k) \big) 
&=& \overline{w}(b_i,a_k) -\overline{w}(a_i,a_k),  \\
\Phi\big( \overline{u}(b_2,a_1,a_k) \big) &=&
-\overline{w}(a_1,a_k) + \overline{w}(b_2,a_k),\\
\Phi\big( \overline{u}(a_j,a_1,a_k)  \big) &=& 
\overline{w}(a_1,a_k) + \overline{w}(a_j,a_k).
\end{eqnarray*}
Using the fact that 
$\big\{\overline{w}(a_i,a_k)\,\vert\, i<k\big\}
\cup \big\{\overline{w}(b_i,a_k)\,\vert\, i<k\big\} 
\cup \big\{\overline{w}(a_k,b_k)\big\}$ 
is a basis of $\Gr_k W$,
we easily check that
the above values of $\Phi$ are $\Q$-linearly independent.
Claim \eqref{eq:claim_1} follows.

We now prove \eqref{eq:claim_2}.
The quotient $Q:=F_3 U /F_3 I$
is generated by the classes 
\begin{equation} \label{eq:u_Q}
\overline{u}(p,q,r) \quad 
 \hbox{with $p,q,r \in F_3 S$ pairwise different}
\end{equation}
and the classes
\begin{equation} \label{eq:u'_Q}
 \overline{u'}(r;p,q)
\quad \hbox{with $p,q,r \in F_3 S$ 
such that $N(p),N(q),N(r)$ are pairwise different}
\end{equation}
of elements in $F_3U$
of type \eqref{eq:uuuu} and \eqref{eq:uuuu'}, respectively. 
For any $p,q,r \in F_3S$ such that 
$N(p),N(q),N(r)$ are pairwise different,
the element
$$
\Diff{p}{\ov{p}}{q}{\ov{q}}{r}{\ov{r}}
= \ve(q)\, u'(q;p,r) + \ve(q)\, u'(\ov{q};p,r)
$$
is of type (i), so that we have 
$\overline{u'}(q;p,r) = - \overline{u'}(\ov{q};p,r) \in Q$.
Thus, among the generators of $Q$ of type \eqref{eq:u'_Q},
we can reduce ourselves to 
$\overline{u'}(a_1;a_2,a_3)$, 
$\overline{u'}(a_2;a_1,a_3)$, 
$\overline{u'}(a_3;a_1,a_2)$.
Furthermore, for any $p,q,r \in F_3S$ such that 
$N(p),N(q),N(r)$ are pairwise different,
the element
$$
\IHXthree{p}{q}{r}
=\ve(r)\,u'(q;p,r) + \ve(q)\, u(r,\ov{r},p)
-\ve(r)\, u'(p;q,r)- \ve(p)\, u(r,\ov{r},q)
$$
is of type (iii), 
so that 
$\ve(q)\, \overline{u}(r,\ov{r},p)
= -\ve(r)\,\overline{u'}(q;p,r) 
+\ve(r)\, \overline{u'}(p;q,r)+ \ve(p)\, \overline{u}(r,\ov{r},q)\in Q$;
hence, among the generators of $Q$ 
of type \eqref{eq:u_Q}, we can remove
$\overline{u}(a_1,b_1,a_2)$, 
$\overline{u}(a_2,b_2,a_3)$ and
$\overline{u}(a_3,b_3,a_1)$.
Consequently, the module $Q$ is generated
by the following 10 elements:
\begin{eqnarray*}
\overline{u}(a_1,a_2,a_3), && \overline{u}(b_1,a_2,a_3), \ 
\overline{u}(a_1,b_2,a_3), \ 
\overline{u}(a_1,a_2,b_3), \\ 
\overline{u}(a_1,b_1,a_3), \
\overline{u}(a_2,b_2,a_1), \ 
\overline{u}(a_3,b_3,a_2), &&
\overline{u'}(a_1;a_2,a_3), \
\overline{u'}(a_2;a_1,a_3), \
\overline{u'}(a_3;a_1,a_2).    
\end{eqnarray*}
According to \eqref{eq:val_1}
and \eqref{eq:val_2}, 
the values of the map 
$\overline{B}:Q \to F_3 W \oplus \Q$
(which is induced by $B:F_3 U \to F_3 W \oplus \Q$)
on this generating set are as follows:
\begin{eqnarray*}
\overline{B}\big(\overline{u}(a_1,a_2,a_3) \big) &=&  
\big(w(a_2,a_3)+ w(a_1,a_3)+w(a_1,a_2) \,,\, -1/4 \big) \\
\overline{B}\big(\overline{u}(b_1,a_2,a_3)\big) &=& 
\big(-w(a_2,a_3)+w(a_1,b_3)+w(a_1,b_2) \,,\, +1/4 \big)  \\
\overline{B}\big(\overline{u}(a_1,b_2,a_3)\big) &=& 
\big(w(a_2,b_3)-w(a_1,a_3)+w(a_1,b_2) \,,\, +1/4 \big)  \\
\overline{B}\big(\overline{u}(a_1,a_2,b_3)\big) &=& 
\big(w(a_2,b_3)+w(a_1,b_3)-w(a_1,a_2) \,,\, +1/4\big)  \\
\overline{B}\big(\overline{u}(a_1,b_1,a_3)\big)  &=&
\big(w(a_1,b_3) -w(a_1,a_3)+w(a_1,b_1) \,,\, +1/4 \big)  \\
\overline{B}\big(\overline{u}(a_2,b_2,a_1)\big) &=&
\big(w(a_1,b_2)-w(a_1,a_2)+ w(a_2,b_2)\,,\, +1/4 \big)  \\
\overline{B}\big(\overline{u}(a_3,b_3,a_2)\big) &=& 
\big(w(a_2,b_3) -w(a_2,a_3)+ w(a_3,b_3) \,,\, +1/4 \big)  \\
\overline{B}\big(\overline{u'}(a_1;a_2,a_3)\big) &=&
\big(w(a_2,a_3)-w(a_2,b_3) \,,\, 0\big)  \\
\overline{B}\big(\overline{u'}(a_2;a_1,a_3)\big) &=&
\big(w(a_1,a_3)-w(a_1,b_3)  \,,\, 0\big)  \\
\overline{B}\big(\overline{u'}(a_3;a_1,a_2)\big) &=& 
\big(w(a_1,a_2)-w(a_1,b_2)  \,,\, 0\big)  
\end{eqnarray*}
Using that
$\big\{w(a_i,a_j)\,\vert\, 1 \leq i <j \leq 3 \big\}
\cup \big\{w(a_i,b_i)\,\vert\, 1 \leq i \leq 3 \big\}
\cup \big\{w(a_i,b_j)\,\vert\, 1 \leq i <j \leq 3 \big\}$
is a basis of the $\Q$-vector space $F_3W$,
it is easily checked that the above  vectors
of $F_3W\oplus \Q$ are $\Q$-linearly independent.
Therefore, the map $\overline{B}$ is injective.
\end{proof}

\subsection{Vanishing of $J$ on $K$} \label{subsec:vanishing}

We now prove that the map 
$J:\Lambda^2 (\Lambda^3 H) \to \Gamma_2\calI/\Gamma_3 \calI$ 
vanishes on the elements of type 
($\calR_i)$ with $0\leq i \leq 3$,
which have been identified in Theorem~\ref{th:K}.
Again, we use the action of the subgroup $G=G(S)$ of $\Sp(H)$
 defined at \eqref{eq:G}.
Since  the elements ($\calR_i)$ constitute a $G$-generating system of $K$ 
and since $J$ is $G$-equivariant,
this will prove that $J(K)=0$ and  conclude the proof of Theorem~B.

\subsubsection{BP-maps and PB-maps}

We start by fixing notations 
for some specific elements of the Torelli group $\calI$,
which we shall need in the sequel.

The first class of elements of $\calI$, 
which we have already  met in previous sections
and are called \emph{BP-maps} by Johnson,
are opposite Dehn twists  $T_{\varepsilon}T_\delta^{-1}$ 
along a pair $(\varepsilon,\delta)$ of disjoint and cobounding,
simple closed curves.
In fact, we shall only need such  ``B''ounding ``P''airs 
$(\varepsilon, \delta)$ that delimit 
a subsurface of genus~$1$ in $\Sigma$.
The data of a BP-map $T_{\varepsilon}T_\delta^{-1}$ of genus $1$
is encoded by 
giving a chain of $3$  circles in  $\Sigma$, whose neighborhood is a subsurface of genus $1$ with $2$ boundary components, 
as shown below:
\begin{equation} \label{eq:BP}
\begin{array}{c}
\includegraphics[scale=0.42]{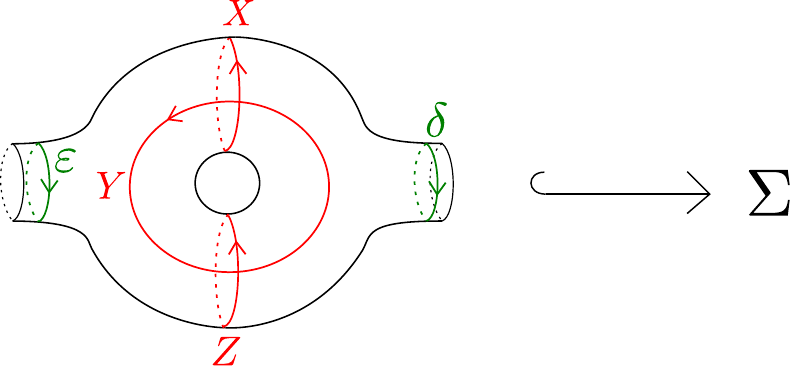}
\end{array},
\
\begin{array}{l}
\tau_1\big(T_{\varepsilon}T_\delta^{-1}\big)
= - x\wedge y \wedge e  =x \wedge y \wedge z\\
\hbox{where $x=\{X\},y=\{Y\},z=\{Z\}$, $e=\{\epsilon\}$.}
\end{array}
\end{equation}
(The value of $\tau_1$  on $T_{\varepsilon}T_\delta^{-1}$
is derived directly from \eqref{eq:tau1_beta}.)

The second class of elements of $\calI$ are called \emph{PB-maps},
since they arise from embeddings of the commutator subgroup 
of  the ``P''ure ``B''raid group   into $\calI$  \cite{Oda92}.
Specifically, a PB-map in~$\calI$ is encoded by a $Y$-shaped graph 
in the surface $\Sigma$,  
which gives a 2-holed disk in $\Sigma$
and defines a commutator of two Dehn twists 
$[T_{\varepsilon},T_\delta]$, as shown below:
\begin{equation} \label{eq:PB}
\begin{array}{c}
\includegraphics[scale=0.25]{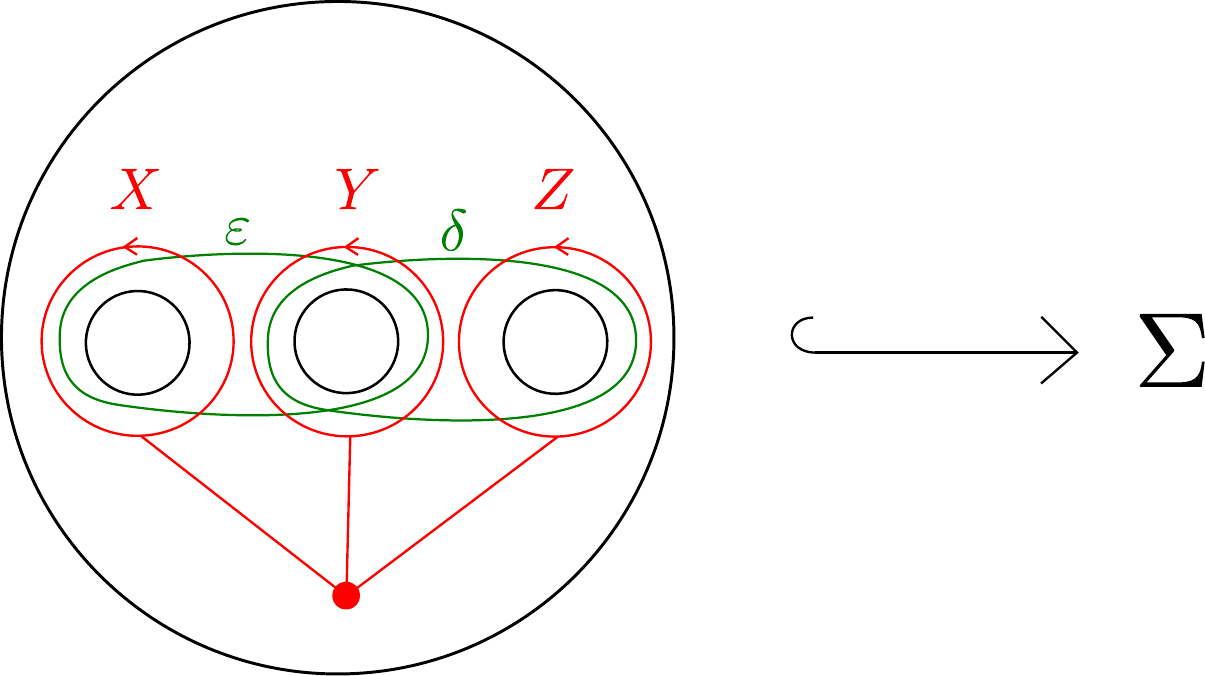}
\end{array},
\quad
\begin{array}{l}
\tau_1\big([T_{\varepsilon},T_\delta]\big)
 = - x \wedge y \wedge z\\
\hbox{where $x=\{X\},y=\{Y\},z=\{Z\}$.}
\end{array}
\end{equation}
(The value of $\tau_1$ on $[T_{\varepsilon},T_\delta]$
can be derived, for instance, from the exact 
relationship between Milnor invariants 
and Johnson homomorphisms \cite{GH02};
BP-maps also appear in \cite{Put09}
under the name ``commutators of simply intersecting pairs''.)

Recall that $N:S \to \{1,\dots,g\}$ is the map 
defined by $N(a_i)=N(b_i)=i$,
and observe that any elementary trivector $s_1\wedge s_2 \wedge s_3$
(with $s_1,s_2,s_3\in S$ pairwise different)
can be realized as the value of $\tau_1$ on
\begin{itemize}
\item  a PB-map
if $N(s_1),N(s_2),N(s_3)$ are pairwise different,
\item  a BP-map if $N(s_i)=N(s_j)$ for some $i\neq j$.
\end{itemize}

\begin{example}
The elementary trivectors $a_1 \wedge a_2 \wedge b_3$ 
and $a_1 \wedge b_1 \wedge b_3$
are (up to signs)
realized by the following PB-map and BP-map, respectively:
$$
\includegraphics[scale=0.48]{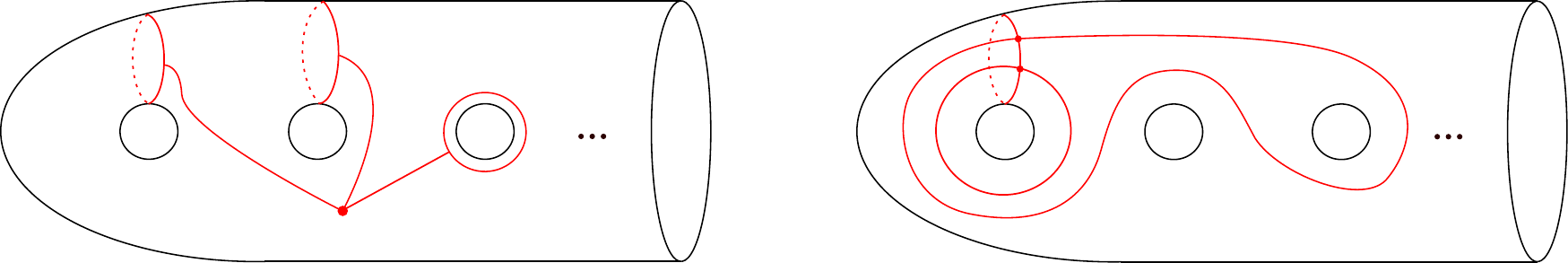}
$$

\hfill $\blacksquare$
\end{example}

\subsubsection{Vanishing of $J$ on the elements ($\calR_0)$}

All the elements of the family ($\calR_0)$
 are of the form
$\Com{s_1}{s_2}{s_3}{s'_1}{s'_2}{s'_3}$
where the elementary trivectors $s_1\wedge s_2 \wedge s_3$
and $s_1'\wedge s_2' \wedge s_3'$ are realized by
BP-maps or PB-maps with disjoint supports:
specifically, realizations are shown (up to a sign)
in Table \ref{table:no_contraction}.
Therefore, all the elements of ($\calR_0)$ belong to $\ker(J)$.

\begin{table}[h!]
\centering
 \begin{tabular}{||c c c ||} 
 \hline && \\
 $\begin{array}{c}
 \hbox{
 \includegraphics[scale = 0.3]{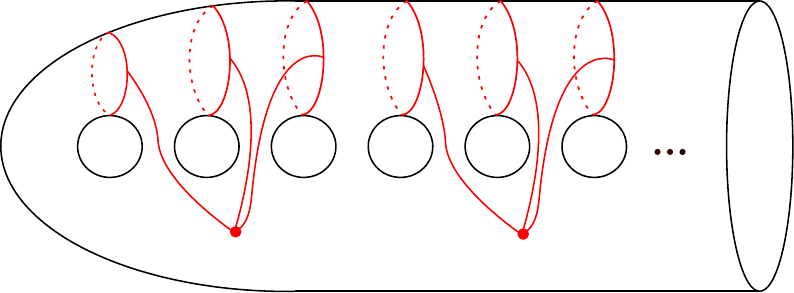}} \\ 
 \Com{a_1}{a_2}{a_3}{a_4}{a_5}{a_6} 
 \end{array}$ & 
  $\begin{array}{c}
 \hbox{\includegraphics[scale = 0.3]{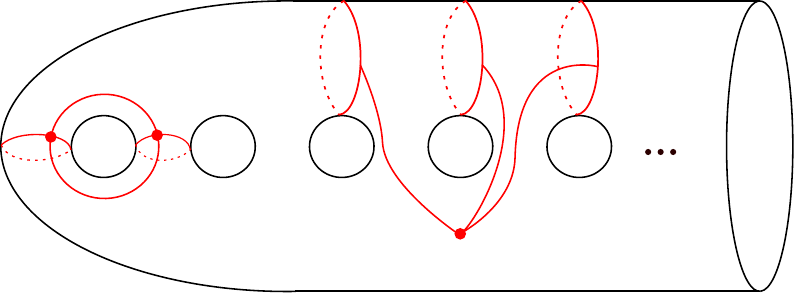}} \\ 
 \Com{a_1}{b_1}{a_2}{a_3}{a_4}{a_5}
  \end{array}$& 
  $\begin{array}{c}
 \hbox{\includegraphics[scale = 0.3]{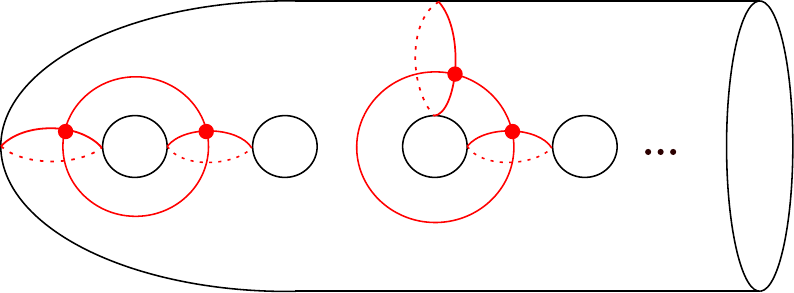}} \\ 
 \Com{a_1}{b_1}{a_2}{a_3}{b_3}{a_4}
  \end{array}$\\[0.5cm] 
  $\begin{array}{c}
 \hbox{\includegraphics[scale = 0.3]{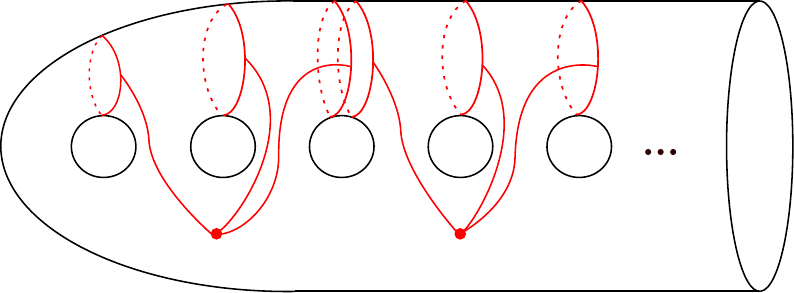}} \\ 
 \Com{a_1}{a_2}{a_3}{a_3}{a_4}{a_5}
  \end{array}$& 
  $\begin{array}{c}
 \hbox{\includegraphics[scale = 0.3]{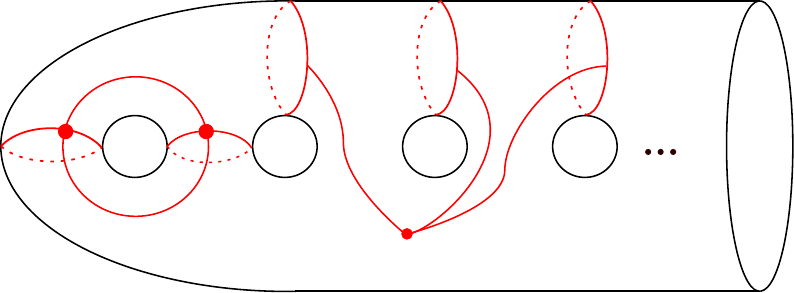}} \\ 
 \Com{a_1}{b_1}{a_2}{a_2}{a_3}{a_4}
  \end{array}$& 
  $\begin{array}{c}
 \hbox{\includegraphics[scale = 0.3]{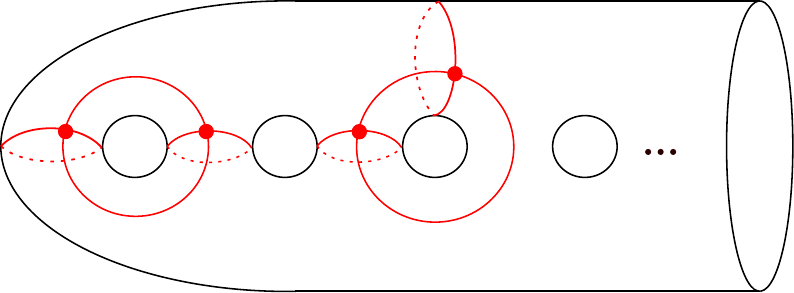}} \\ 
 \Com{a_1}{b_1}{a_2}{a_2}{a_3}{b_3}
  \end{array}$\\[0.5cm] 
 &  $\begin{array}{c}
 \hbox{\includegraphics[scale = 0.3]{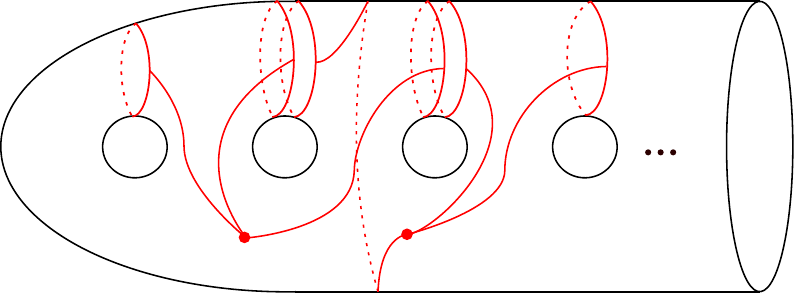}} \\ 
 \Com{a_1}{a_2}{a_3}{a_2}{a_3}{a_4}
  \end{array}$& \\ && \\
 \hline
 \end{tabular}
 \caption{Realizations of elementary trivectors
 with no contraction}
 \label{table:no_contraction}
\end{table}

Since $(\calR_0)$ is a $G$-generating system of $K\cap U_0$, 
we obtain the following more general result.

\begin{lemma} \label{lem:0}
The submodule $K\cap U_0$ of $\Lambda^2(\Lambda^3H)$
is contained in $\ker(J)$.
\end{lemma}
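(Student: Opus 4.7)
The plan is to combine Theorem~\ref{th:K} with the $G$-equivariance of $J$. Since $(\calR_0)$ is a $G$-generating system of $K\cap U_0$, and since $J$ is $G$-equivariant (because the $G$-action on $\Lambda^3 H$ lifts to the $\Sp(H)$-action induced by the conjugation action of $\calM$ on $\calI$), it will suffice to check that $J$ vanishes on each of the seven generators listed in $(\calR_0)$.

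For each generator $x = \Com{s_1}{s_2}{s_3}{s'_1}{s'_2}{s'_3}$, I want to produce $u,v\in \calI$ with \emph{disjoint supports} in $\Sigma$ such that $\tau_1(u) = \pm (s_1\wedge s_2\wedge s_3)$ and $\tau_1(v) = \pm (s'_1\wedge s'_2\wedge s'_3)$. If such $u,v$ exist, then $[u,v]=1$ in $\calI$, and hence
\[
J(x) = \pm \{[u,v]\}_{\Gamma_3\calI} = 0 \in \Gamma_2\calI/\Gamma_3\calI.
\]
The choice of realization is dictated by the map $N\colon S\to\{1,\dots,g\}$: by \eqref{eq:PB}, an elementary trivector whose three entries have pairwise distinct $N$-images is realized (up to sign) by a PB-map, supported on a disk with two holes; by \eqref{eq:BP}, an elementary trivector for which two entries share an $N$-value is realized by a genus-$1$ BP-map, supported on a once-holed torus.

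The crucial geometric input is that $x$ lies in $U_0=V_0$, meaning there is no mixed contraction: no element of $\{s_1,s_2,s_3\}$ is the $\ov{\,\cdot\,}$-dual of any element of $\{s'_1,s'_2,s'_3\}$. This ensures that the topological ``material'' needed for $u$ (a disk with two holes, or a genus-$1$ piece built from circles of the meridian-parallel system) can be placed on the $N$-fibres hit by $\{s_1,s_2,s_3\}$, and similarly for $v$, without the two supports having to overlap in $\Sigma$. Combined with a count of how many fibres each side occupies, this leaves room to realize $u$ and $v$ as disjointly supported elements of $\calI$.

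The main obstacle is therefore purely the case-by-case picture-drawing: for each of the seven generators in $(\calR_0)$, one must exhibit explicit disjoint realizations, which is precisely what Table~\ref{table:no_contraction} records (with the understanding that the neighborhoods of the drawn curves are pairwise disjoint subsurfaces). Once these seven realizations are in hand, $G$-equivariance of $J$ propagates the vanishing to the whole $G$-submodule $K\cap U_0$, concluding the proof.
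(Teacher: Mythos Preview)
Your proposal is correct and follows essentially the same approach as the paper: reduce to the seven generators in $(\calR_0)$ via $G$-equivariance of $J$, then realize each pair of elementary trivectors by BP-maps or PB-maps with disjoint supports (as in Table~\ref{table:no_contraction}), so that the corresponding commutators are trivial. One small terminological slip: the support of a genus-$1$ BP-map is a genus-$1$ surface with \emph{two} boundary components (the neighborhood of the $3$-chain), not a once-holed torus.
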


\subsubsection{Vanishing of $J$ on the elements $(\calR_1)$}

We deduce from the following lemma that
the first six elements of ($\calR_1)$ are in the kernel of $J$.

\begin{lemma} \label{lem:D}
Let $x,y,c_1,c_2,u,v \in S$  be pairwise different,
and assume that $c_1\neq \ov{c_2}$ too.
Then we have \
$\Diff{x}{y}{c_1}{c_2}{\ov{u}}{\ov{v}} \in \ker(J).$
\end{lemma}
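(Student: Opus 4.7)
The lemma is equivalent to the congruence
\[
\ve(c_1)\,[\phi_1,\psi_1]\;\equiv\;\ve(c_2)\,[\phi_2,\psi_2]\pmod{\Gamma_3\calI},
\]
for any lifts $\phi_i,\psi_i\in\calI$ with $\tau_1(\phi_i)=x\wedge y\wedge c_i$ and $\tau_1(\psi_i)=\ov{c_i}\wedge\ov{u}\wedge\ov{v}$. Such lifts exist since $\tau_1$ is surjective, and any two lifts differ by an element of $\calK$; the corresponding corrections in the commutators lie in $[\calI,\calK]=\Gamma_3\calI$ by Theorem~A, so the congruence is well-defined independently of the chosen lifts.

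The plan is to realize $\phi_i$ and $\psi_i$ geometrically as PB-maps (or BP-maps when some indices coincide), supported on 2-holed disks $D_i^{\phi}$ and $D_i^{\psi}$ meeting transversally in a single point $p_i=C_i\cap\ov{C}_i$, where $C_i$ and $\ov{C}_i$ are representative curves of the classes $c_i$ and $\ov{c}_i$. The pairwise-distinctness hypotheses, together with $c_1\neq\ov{c}_2$, allow such a configuration to be arranged geometrically with $X,Y,C_i$ in $D_i^{\phi}$ and $\ov{C}_i,\ov{U},\ov{V}$ in $D_i^{\psi}$, all other pairs of curves disjoint. The sign of the intersection $p_i$ is precisely $\ve(c_i)$. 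I then invoke the Push/Birman formalism of Section~\ref{subsec:Push}: writing $\phi_i=\Push^+(\overrightarrow{\gamma_i})$ for an explicit loop $\gamma_i$ in a subsurface $W\supset D_i^{\phi}$ disjoint from most of $D_i^{\psi}$, the commutator identity displayed in the proof of Proposition~\ref{prop:push-push} yields
\[
[\phi_i,\psi_i]\;\equiv\;\Push^+\!\bigl((\overrightarrow{\psi_i})_{*}(\gamma_i)\cdot\gamma_i^{-1}\bigr)\pmod{\Gamma_3\calI}.
\]
Because $\psi_i$ meets $\gamma_i$ only near $p_i$, the ``difference'' $(\overrightarrow{\psi_i})_{*}(\gamma_i)\cdot\gamma_i^{-1}$ equals $\ve(c_i)\cdot\eta$ modulo $\Gamma_2\,\pi_1\bigl(UT(\clo{W})\bigr)$, where $\eta$ is a universal loop depending only on the reduced configuration $(X,Y,\ov{U},\ov{V})$ and not on $c_i$.

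Multiplying by $\ve(c_i)=\pm1$ thus yields the same element $\Push^+(\eta)$ modulo $\Gamma_3\calI$ for both $i=1$ and $i=2$, proving the lemma. The main obstacle will be the geometric verification of this last claim: showing rigorously that the action of $\psi_i$ on $\gamma_i$ contributes only the local sign $\ve(c_i)$ at $p_i$, with no additional $c_i$-dependence surviving modulo $\Gamma_2$. Using the $G$-equivariance of $J$ (with $G$ as in \eqref{eq:G}), this reduces to a short list of canonical configurations indexed by whether any of $x,y,u,v$ shares an index with $c_1$ or $c_2$; each reduced case should be verifiable by direct inspection of curve diagrams analogous to those in Table~\ref{table:no_contraction}.
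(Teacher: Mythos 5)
Your opening reduction is fine: the lemma is indeed equivalent to the congruence $[\phi_1,\psi_1]^{\ve(c_1)}\equiv[\phi_2,\psi_2]^{\ve(c_2)} \bmod \Gamma_3\calI$ for lifts of the two trivectors, and Theorem~A makes this independent of the lifts. The rest of the proposal, however, has a genuine gap: the assertion that $[\phi_i,\psi_i]$ modulo $\Gamma_3\calI$ is governed purely by the local sign $\ve(c_i)$ at the single intersection point of the two supports, with a ``universal'' remainder $\eta$ independent of $c_i$, is essentially a restatement of the lemma itself, and you explicitly leave its verification open. Several points would need fixing before that verification could even be attempted: (a) to invoke the commutator identity \eqref{eq:comm} you need $\psi_i$ to lie in $\calM(W)$ for the subsurface $W$ whose capped-off copy carries $\gamma_i$, which your configuration does not guarantee; (b) the difference loop $\gamma_i^{-1}\overrightarrow{\psi_i}_*(\gamma_i)$ already lies in $\Gamma_2\pi_1\big(UT(\clo{W})\big)$ by Proposition~\ref{prop:push-push}\,(2) with $k=2$, so identifying it ``modulo $\Gamma_2$'' gives nothing --- you must compute it modulo $\Gamma_3\pi_1\big(UT(\clo{W})\big)$; (c) the element $\eta$ lives in a group depending on the chosen $W$, which itself depends on $c_i$ (the support $D_i^\phi$ contains a curve representing $c_i$), so even the meaning of ``the same $\eta$ for $i=1$ and $i=2$'' requires arranging a single $W$ and compatible configurations, including in the boundary cases $c_i\in\{\ov{x},\ov{y},\ov{u},\ov{v}\}$, which your sketch does not separate out.

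By contrast, the paper's own proof avoids all local analysis. Setting $\eta:=\ve(c_1)\ve(c_2)$, one applies to the contraction-free element $\Com{x}{y}{c_1}{\ov{c_2}}{\ov{u}}{\ov{v}}\in\ker(J)$ (Lemma~\ref{lem:0}, i.e$.$ disjointly supported realizations) the symplectic transformation sending $(c_1,\ov{c_2})$ to $(c_1+c_2,\,-\eta\,\ov{c_1}+\ov{c_2})$ and fixing the other basis vectors; by $\Sp(H)$-equivariance the image lies in $\ker(J)$, and expanding it by multilinearity the two cross terms are again contraction-free, so what survives is (up to sign) exactly $\Diff{x}{y}{c_1}{c_2}{\ov{u}}{\ov{v}}$; the degenerate cases where this particular transformation is unavailable are handled by symmetric variants. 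If you wish to pursue your geometric route, you would in effect be re-proving inside the Torelli group the ``one-point intersection'' commutator computation underlying the clasper calculus of \cite{MM13}, which is substantially harder than the short algebraic argument above.
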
 

\begin{proof}
Let $\eta := \varepsilon(c_1)\, \varepsilon(c_2)$.
We claim that 
$$
C:= \Com{x}{y}{c_1+c_2}{-\eta\, \ov{c_1} + \ov{c_2}}{\ov{u}}{\ov{v}} 
\in \ker(J).
$$
Since $\Com{x}{y}{c_1}{\ov{c_2}}{\ov{u}}{\ov{v}}$
and  $\Com{x}{y}{c_2}{\ov{c_1}}{\ov{u}}{\ov{v}}$
belong to $\ker(J)$ by Lemma \ref{lem:0},
we deduce that 
$-\eta\,\Com{x}{y}{c_1}{\ov{c_1}}{\ov{u}}{\ov{v}}+
\Com{x}{y}{c_2}{\ov{c_2}}{\ov{u}}{\ov{v}}
= -\varepsilon(c_2)\, \Diff{x}{y}{c_1}{c_2}{\ov{u}}{\ov{v}}$ does too.

We now prove the above claim. If 
${c_2} \notin \{\ov{x},\ov{y}\}$ and $c_1\not\in \{\ov{u},\ov{v}\}$,
then we consider the symplectic transformation $T$
that maps $(c_1,\ov{c_1},c_2,\ov{c_2})$
to  $(c_1+c_2,\ov{c_1},c_2,-\eta\, \ov{c_1}+\ov{c_2})$
and fixes all other elements of $S$; then $T$ maps
$$
\Com{x}{y}{c_1}{\ov{c_2}}{\ov{u}}{\ov{v}} \in \ker(J)
$$
to $C$, so that $C\in \ker(J)$.
If we have ${c_1} \notin \{\ov{x},\ov{y}\}$ 
and $c_2\not\in \{\ov{u},\ov{v}\}$, we can proceed similarly
by exchanging the roles of $c_1$ and $c_2$.

We now assume that none of the above two conditions occur,
i.e$.$ we have $\{c_1,c_2\}=\{\ov{x},\ov{y}\}$
or (exclusively) $\{c_1,c_2\}=\{\ov{u},\ov{v}\}$. For instance,
we assume that $(c_1,c_2)=(\ov{x},\ov{y})$,
the other cases being treated similarly. 
Then 
$$
C=
\Com{\ov{c_1}}{-\eta \ov{c_1}+\ov{c_2}}{c_1+c_2}{-\eta \ov{c_1}+\ov{c_2}}{\ov{u}}{\ov{v}}
= T\cdot  \Com{\ov{c_1}}{\ov{c_2}}{c_1}{\ov{c_2}}{\ov{u}}{\ov{v}},
$$
and we deduce from Lemma \ref{lem:0}
that $C$ belongs to $\ker(J)$ in the present case too.
\end{proof}

Besides, we have the following
variant of Lemma~\ref{lem:D}
(which will be used later).

\begin{lemma} \label{lem:D_bis}
Let $x,y,c,u,v \in S$  be pairwise different,
and assume that $c\not\in \{\ov{x},\ov{y},\ov{u},\ov{v}\}$.
Then we have 
$\Diff{x}{y}{c}{\ov{c}}{\ov{u}}{\ov{v}} \in \ker(J).$
\end{lemma}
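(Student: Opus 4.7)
My plan is to reduce Lemma~\ref{lem:D_bis} to the already-established Lemma~\ref{lem:D} by a telescoping identity in the central (``contraction'') slots of $\Diff$. Unpacking the definition, one checks that for any $c_1, c_2, c_3 \in S$
\[
\Diff{x}{y}{c_1}{c_3}{\ov{u}}{\ov{v}} \;=\; \Diff{x}{y}{c_1}{c_2}{\ov{u}}{\ov{v}} \;+\; \Diff{x}{y}{c_2}{c_3}{\ov{u}}{\ov{v}},
\]
since $\Diff{x}{y}{c_i}{c_j}{\ov u}{\ov v}$ equals $\ve(c_i) A_i - \ve(c_j) A_j$ with $A_k := \Com{x}{y}{c_k}{\ov{c_k}}{\ov u}{\ov v}$, so the middle $A_2$ appears with opposite signs in the two summands and cancels.

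Applying this identity at $(c_1, c_2, c_3) = (c, d, \ov c)$ for an auxiliary $d \in S$ produces
\[
\Diff{x}{y}{c}{\ov c}{\ov u}{\ov v} \;=\; \Diff{x}{y}{c}{d}{\ov u}{\ov v} \;+\; \Diff{x}{y}{d}{\ov c}{\ov u}{\ov v}.
\]
For both summands to satisfy the hypotheses of Lemma~\ref{lem:D}, I require $d \in S \setminus \{x, y, c, \ov c, u, v\}$: pairwise distinctness of the six relevant elements forces $d \neq x, y, c, \ov c, u, v$, while the non-involutive-pair conditions $c \neq \ov d$ (first summand) and $d \neq c$ (second summand) amount to $d \notin \{c, \ov c\}$, already included. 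The hypothesis $c \notin \{\ov x, \ov y, \ov u, \ov v\}$ of Lemma~\ref{lem:D_bis} ensures that $\{x, y, c, \ov c, u, v\}$ has cardinality exactly six, so for $g \ge 4$ its complement in $S$ has size $2g - 6 \ge 2$ and a valid $d$ exists; both summands then lie in $\ker(J)$ by Lemma~\ref{lem:D}, and consequently so does $\Diff{x}{y}{c}{\ov c}{\ov u}{\ov v}$.

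The main obstacle is the low-genus case $g = 3$, where $\{x, y, c, \ov c, u, v\}$ exhausts $S$ and no auxiliary $d$ is available. I would handle this case by $G$-equivariance, reducing to $c = a_1$ so that $(\{x, y\}, \{u, v\})$ runs over the ordered partitions of $\{a_2, a_3, b_2, b_3\}$ into two pairs. When $\{\ov u, \ov v\} = \{x, y\}$ (which happens in four of the six orderings), a direct expansion shows that $\Diff{x}{y}{c}{\ov c}{\ov u}{\ov v}$ is already zero in $\Lambda^2(\Lambda^3 H)$ by the outer antisymmetry $P \wedge Q + Q \wedge P = 0$. The two remaining orderings coincide, up to sign and a transposition $F_{ij} \in G$, with the $(\calR_3)$-generator $\Diff{a_1}{b_1}{a_2}{b_2}{a_3}{b_3}$, so their vanishing under $J$ follows from the forthcoming analysis of $(\calR_3)$. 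Care must then be taken so that $J(\Diff{a_1}{b_1}{a_2}{b_2}{a_3}{b_3}) = 0$ is established independently of Lemma~\ref{lem:D_bis}, in order to avoid circularity in the logical order of the proof of Theorem~\ref{th:K}.
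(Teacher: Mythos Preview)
Your telescoping argument is correct for $g\geq 4$ and is a genuinely different route from the paper's: you split
\[
\Diff{x}{y}{c}{\ov c}{\ov u}{\ov v}=\Diff{x}{y}{c}{d}{\ov u}{\ov v}+\Diff{x}{y}{d}{\ov c}{\ov u}{\ov v}
\]
and invoke Lemma~\ref{lem:D} on each piece, whereas the paper never uses Lemma~\ref{lem:D} here. Instead, the paper applies the symplectic transvection $T$ sending $c\mapsto c+\ov c$ (and fixing the rest of $S$) to the element $\Com{x}{y}{c}{c}{\ov u}{\ov v}$, which lies in $K\cap U_0$; expanding gives
\[
T\cdot\Com{x}{y}{c}{c}{\ov u}{\ov v}-\Com{x}{y}{c}{c}{\ov u}{\ov v}-\Com{x}{y}{\ov c}{\ov c}{\ov u}{\ov v}=\ve(c)\,\Diff{x}{y}{c}{\ov c}{\ov u}{\ov v},
\]
and one concludes by Lemma~\ref{lem:0} and $\Sp(H)$-equivariance of $J$. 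This argument is uniform in $g\geq 3$ and does not need an auxiliary basis element.

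For $g=3$, however, your proposal has a genuine gap. The two ``remaining orderings'' you isolate are, up to the $G$-action, exactly the element $\Diff{a_1}{b_1}{a_2}{b_2}{a_3}{b_3}$, and this element is itself an instance of Lemma~\ref{lem:D_bis} (take $x=a_1$, $y=b_1$, $c=a_2$, $u=b_3$, $v=a_3$). In the paper, the \emph{only} argument offered for this $(\calR_3)$-generator is precisely an application of Lemma~\ref{lem:D_bis}; there is no independent ``forthcoming analysis'' to appeal to. So your deferral is circular, as you yourself suspect, and you have not broken the circle. To close the gap you would need an ad hoc argument for $\Diff{a_1}{b_1}{a_2}{b_2}{a_3}{b_3}\in\ker(J)$ at $g=3$ that avoids Lemma~\ref{lem:D_bis}; the paper's transvection trick above does exactly this, and in fact disposes of all cases at once.
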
 

\begin{proof}
Let $T$ be the symplectic transformation that maps $c$ to $c+\ov{c}$
and fixes all other elements of $S$. We have
$$
T \cdot \Com{x}{y}{c}{c}{\ov{u}}{\ov{v}} -\Com{x}{y}{c}{c}{\ov{u}}{\ov{v}} 
-\Com{x}{y}{\ov{c}}{\ov{c}}{\ov{u}}{\ov{v}} 
=  \ve(c)\, \Diff{x}{y}{c}{\ov{c}}{\ov{u}}{\ov{v}} 
$$
and we conclude with Lemma \ref{lem:0}.
\end{proof}

The next lemma deals 
with the seventh (and last) element of $(\calR_1)$.

\begin{lemma}[Gervais \& Habegger]
We have \
$
\IHXone{a_{1}}{a_{2}}{a_{3}}{a_{4}}{b_{1}} \in \ker (J).
$
\end{lemma}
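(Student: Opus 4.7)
The plan is to invoke the "topological IHX relation" of Gervais--Habegger \cite{GH02}, which was flagged in the introduction as a key ingredient for Theorem~B. First I would expand
$$
\IHXone{a_1}{a_2}{a_3}{a_4}{b_1}
= \Com{a_1}{a_2}{b_1}{a_1}{a_3}{a_4}
+ \Com{a_1}{a_3}{b_1}{a_1}{a_4}{a_2}
+ \Com{a_1}{a_4}{b_1}{a_1}{a_2}{a_3}
$$
and realize each summand geometrically. In every summand, the first trivector $a_1 \wedge a_i \wedge b_1$ (for $i \in \{2,3,4\}$) contains the pair $\{a_1,b_1\}$, hence is realized (up to a sign) by the $\tau_1$-value of a genus-one BP-map~$f_i$ associated to a $3$-chain of circles whose central handle lies in the first genus-one summand and whose middle circle represents $a_i$, cf.~\eqref{eq:BP}. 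The second trivector $a_1 \wedge a_j \wedge a_k$ has three distinct indices and is realized by a PB-map $h_{jk}$ associated to a $Y$-graph whose three outer edges represent $a_1, a_j, a_k$, cf.~\eqref{eq:PB}.

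The core step is to choose the supports of the $f_i$ and $h_{jk}$ so that the three commutators $[f_2, h_{34}]$, $[f_3, h_{42}]$, $[f_4, h_{23}]$ can be transformed into each other by an IHX-move on the underlying tree configuration. Concretely, I would place all four BP-chain supports in a small tubular neighborhood of the first handle of $\Sigma$ (mutually isotopic and differing only by the ``middle circle'' which represents $a_i$), and choose the three $Y$-graphs so that they differ by the combinatorial IHX move at a single internal edge. Then the Gervais--Habegger relation \cite[Thm.~1]{GH02}, which asserts that the alternating sum of the three PB/BP-maps associated to an IHX-related triple of trees is trivial in~$\Gamma_2 \calI / \Gamma_3 \calI$, gives exactly
$$
J\bigl(\IHXone{a_1}{a_2}{a_3}{a_4}{b_1}\bigr)
= \bigl\{[f_2, h_{34}] \cdot [f_3, h_{42}] \cdot [f_4, h_{23}]\bigr\}_{\Gamma_3 \calI} = 1.
$$

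The main obstacle will be the geometric bookkeeping: one must verify that the topological IHX identity of \cite{GH02}, which is typically stated for tree claspers with leaves in ``general position'', applies in our situation where the ``central edge'' is replaced by a genus-one BP-configuration and one of the four leaves ($a_1$) is homologous to the dual of the handle class $b_1$. This requires checking that the apparent coincidence $s_1 = \ov{c}$ does not obstruct the realization, which can be handled by first performing the IHX move in a slightly enlarged subsurface (where $a_1$ and the $b_1$-handle are disjoint) and then pushing the configuration into $\Sigma$ via an isotopy; any discrepancy produced by the identification lies in $\ker(J)\cap U_0$ and is killed by Lemma~\ref{lem:0}.
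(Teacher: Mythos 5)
Your overall architecture coincides with the paper's: expand $\IHXone{a_1}{a_2}{a_3}{a_4}{b_1}$ into three wedges of a BP-realizable trivector $a_1\wedge a_i\wedge b_1$ with a PB-realizable trivector $a_1\wedge a_j\wedge a_k$, and appeal to the topological IHX relation of \cite{GH02}. The expansion is correct, the BP/PB realizability claims are correct, and — since by Theorem~A the class of $[f_i,h_{jk}]$ in $\Gamma_2\calI/\Gamma_3\calI$ depends only on $\tau_1(f_i)$ and $\tau_1(h_{jk})$ — it would indeed suffice to exhibit \emph{one} choice of realizations for which the product $[f_2,h_{34}]\,[f_3,h_{42}]\,[f_4,h_{23}]$ is seen to lie in $\Gamma_3\calI$.

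That, however, is exactly the step the proposal does not carry out, and it is where all the content lies. The mechanism that makes the product vanish is not an ``IHX-move on supports'' applied to three independently chosen commutators; it is the Hall--Witt identity. The paper takes three Dehn twists $X,Y,Z$ in a cylinder over a $4$-holed disk, writes $\big[{}^ZX,[Y,Z]\big]\big[{}^YZ,[X,Y]\big]\big[{}^XY,[Z,X]\big]=1$, replaces $X,Y,Z$ by the genus-one BP-maps $XX_0^{-1}$, $YY_0^{-1}$, $ZZ_0^{-1}$, and computes. Two features of that computation are missing from your sketch. First, the BP-factors occur \emph{conjugated} (by $Z$, $Y$, $X$), so their $\tau_1$-values are not elementary trivectors but sums such as $\pm\, b_1\wedge(a_1-a_4+b_3)\wedge b_4$; the cross-terms produced when wedging these with the PB-trivectors are precisely the contraction-free elements killed by Lemma~\ref{lem:0}. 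This is the true origin of the ``discrepancy in $U_0$'' you allude to — it comes from the conjugations forced by Hall--Witt, not from ``pushing an enlarged subsurface into $\Sigma$ by an isotopy'', and your version of that step cannot be checked as stated. Second, your paraphrase of \cite{GH02} (``the alternating sum of the three PB/BP-maps associated to an IHX-related triple of trees is trivial in $\Gamma_2\calI/\Gamma_3\calI$'') is not a statement one can quote off the shelf for this hybrid configuration, in which one vertex of each tree is a genus-one bounding pair and the leg $a_1$ is repeated and dual to the handle class $b_1$; this is exactly why the paper reproves the relation ``with some variations'' instead of citing it. As written, the proposal assembles the right ingredients but leaves the identity itself unproved.
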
 

\begin{proof}
This has been proved by Gervais and Habegger \cite{GH02} but,
for the sake of completeness, we repeat their arguments
(with some variations).
We focus on a subsurface of $\Sigma$ of genus $4$
which contains the based loops 
$\alpha_1,\beta_1,\dots,\alpha_4,\beta_4$ of Figure \ref{fig:basis}.
This subsurface (union a rectangle) is shown in Figure \ref{fig:IHX}
as the cylinder $D\times [0,1]$ 
over a 4-holed disk $D$: 
the curves $X,Y,Z$ (shown in Figure~\ref{fig:IHX})
lie on $D \times \{1\}$
and we denote by $X_0,Y_0,Z_0$
the corresponding  curves  on $D \times \{0\}$
(not shown in Figure~\ref{fig:IHX}).
\begin{figure}[h!]
\centering
\includegraphics[scale=0.4]{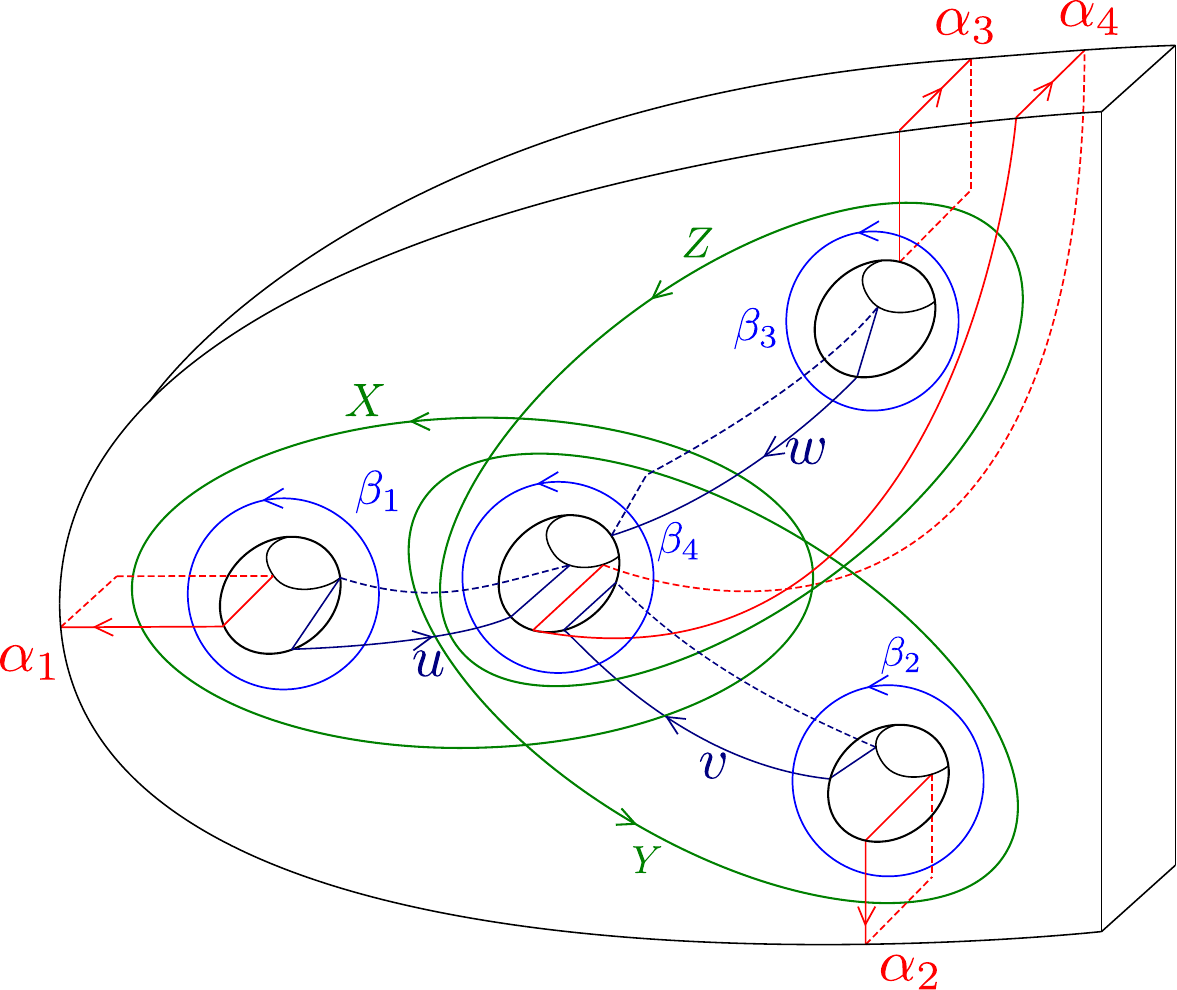}
\caption{The curves $X,Y,Z$ and $U,V,W$
on $\partial(D\times [0,1])$}
\label{fig:IHX}
\end{figure}

In the rest of this proof, the Dehn twist $T_C$  
along a simple closed curve $C$ is simply denoted by $C$
and, if $C$ is oriented, 
the homology class $\{C\}\in H$ is denoted
by the lower case roman letter~$c$. 
By the Hall--Witt identity, we know that
$$
\big[{}^ZX,[Y,Z]\big] \,
\big[{}^YZ,[X,Y]\big] \,
\big[{}^XY,[Z,X]\big] = 1 \ \in \calM.
$$
and, 
since the Dehn twists along  $X_0,Y_0,Z_0$ 
commute with those along $X,Y,Z$, we obtain that
$$
\big[{}^Z(XX_0^{-1}),[Y,Z]\big] \,
\big[{}^Y(ZZ_0^{-1}),[X,Y]\big] \,
\big[{}^X(YY_0)^{-1},[Z,X]\big] = 1 \ \in \calM.
$$
Note that $XX_0^{-1},YY_0^{-1},ZZ_0^{-1}$ are BP-maps (of genus $1$)
while $[Y,Z],[X,Y],[Z,X]$ are PB-maps:
hence the above identity gives a degree $2$ relation
in $\Gr^\Gamma \calI$.
(See \cite{Put09} for a systematic use
of Hall--Witt identities in presentations of the group $\calI$.)
Consequently, the sum
$$
\big(Z_* \cdot \tau_1\big(XX_0^{-1}\big)\big) \wedge \tau_1([Y,Z])+
\big(Y_* \cdot \tau_1\big(ZZ_0^{-1}\big)\big) \wedge \tau_1([X,Y])+
\big(X_* \cdot \tau_1\big(YY_0^{-1}\big)\big) \wedge \tau_1([Z,X])
$$
belongs to $\ker(J)$. 
(Here $X_*\in \Sp(H)$ denotes the action of the Dehn twist along $X$.)
Using \eqref{eq:BP} and \eqref{eq:PB}, 
we obtain
\begin{eqnarray*}
 Z_* \cdot \tau_1\big(XX_0^{-1}\big)
&=& \varepsilon\, b_1 \wedge Z_*(u) \wedge b_4 \\
& = & \varepsilon\, b_1 \wedge (u+z) \wedge b_4 \\
& = & \varepsilon\, b_1 \wedge ((a_1-a_4)+(b_3+b_4)) \wedge b_4 
\ = \ \varepsilon\, b_1 \wedge ( a_1-a_4 +b_3 ) \wedge b_4     
\end{eqnarray*}
and
$$
\tau_1([Y,Z]) = \eta\, b_2 \wedge b_4 \wedge b_3
$$
respectively.
(Here $\varepsilon$ and $\eta$ are fixed signs, 
which we do not need to determine.)
Next, it follows from Lemma \ref{lem:0} that
$$
\big(Z_* \cdot \tau_1\big(XX_0^{-1}\big)\big) \wedge \tau_1([Y,Z])
\equiv - \varepsilon \eta \,
\Com{b_1}{a_4}{b_4}{b_2}{b_4}{b_3}
\mod \ker(J);
$$
similarly, we obtain
$$
\big(Y_* \cdot \tau_1\big(ZZ_0^{-1}\big)\big) \wedge \tau_1([X,Y])
\equiv - \varepsilon \eta \,
\Com{b_3}{a_4}{b_4}{b_1}{b_4}{b_2}
\mod \ker(J)
$$
and 
$$
\big(X_* \cdot \tau_1\big(YY_0^{-1}\big)\big) \wedge \tau_1([Z,X])
\equiv - \varepsilon \eta \,
\Com{b_2}{a_4}{b_4}{b_3}{b_4}{b_1}
\mod \ker(J).
$$
Therefore the element
$$
\underbrace{\Com{b_4}{b_1}{a_4}{b_4}{b_3}{b_2}
+ \Com{b_4}{b_3}{a_4}{b_4}{b_2}{b_1}
+ \Com{b_4}{b_2}{a_4}{b_4}{b_1}{b_3}}_{\IHXone{b_4}{b_1}{b_3}{b_2}{a_4}}
$$
belongs to $\ker(J)$, and we conclude using the $G$-action.
\end{proof}

Since $(\calR_1)$ is a $G$-generating system of $K\cap U_1$, 
we obtain the following.

\begin{lemma} \label{lem:1}
The submodule $K\cap U_1$ of $\Lambda^2(\Lambda^3H)$
is contained in $\ker(J)$.
\end{lemma}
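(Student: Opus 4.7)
The plan is to exploit the $G$-equivariance of the map $J$ together with the $G$-generating system for $K\cap U_1$ provided by part $(\calR_1)$ of Theorem~\ref{th:K}. Since $J$ intertwines the $G$-actions on $\Lambda^2(\Lambda^3 H)$ and $\Gamma_2\calI/\Gamma_3\calI$, and since $(\calR_1)$ consists of seven explicit elements whose $G$-orbits generate $K\cap U_1$, it suffices to verify that each of these seven representatives already lies in $\ker(J)$.

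For the first six elements, all of which are of the form $\Diff{x}{y}{c_1}{c_2}{\ov{u}}{\ov{v}}$, I plan to invoke Lemma~\ref{lem:D} directly. For each generator I would read off the assignment of $x,y,c_1,c_2,u,v$ from the notation and check the two hypotheses of Lemma~\ref{lem:D}: that the six symplectic basis elements $x,y,c_1,c_2,u,v$ are pairwise distinct, and that $c_1\neq \ov{c_2}$. This is entirely mechanical: for instance, in $\Diff{a_1}{a_2}{a_5}{a_3}{a_3}{a_4}$ one gets $u=b_3$, $v=b_4$ and the six elements $a_1,a_2,a_5,a_3,b_3,b_4$ are visibly pairwise distinct, with $a_5\neq \ov{a_3}=b_3$; an analogous inspection handles each of the other five $\mathsf{D}$-type generators, even those like $\Diff{a_3}{a_1}{b_1}{a_2}{a_2}{a_3}$ where the internal letters look repeated but become distinct after applying $\ov{\,\cdot\,}$ to the last two slots.

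The seventh element, $\IHXone{a_1}{a_2}{a_3}{a_4}{b_1}$, has just been shown to lie in $\ker(J)$ by the preceding lemma attributed to Gervais and Habegger, so no further work is required. Combining these two observations yields the conclusion on the $G$-generating system, and then the $G$-equivariance of $J$ propagates the vanishing to all of $K\cap U_1$.

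The substantive content---the Hall--Witt identity on BP-maps and PB-maps underlying the Gervais--Habegger relation, the symplectic trick behind Lemma~\ref{lem:D}, and the algebraic generation statement in Theorem~\ref{th:K}---has all been taken care of in the preceding subsections. Consequently I do not expect any genuine obstacle in writing the proof; the main task is the tabulation of the hypotheses of Lemma~\ref{lem:D} case by case, which could be compressed into a single sentence observing that each of the six $\mathsf{D}$-type generators satisfies the required distinctness condition by direct inspection.
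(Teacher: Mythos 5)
Your proposal is correct and follows exactly the paper's route: the six $\mathsf{D}$-type generators of $(\calR_1)$ fall under Lemma~\ref{lem:D} after checking the distinctness hypotheses (which all hold, as you verify), the seventh is the Gervais--Habegger element handled by the preceding lemma, and $G$-equivariance of $J$ together with Theorem~\ref{th:K} finishes the argument. No gaps.
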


\subsubsection{Vanishing of $J$ on the elements  ($\calR_2)$.}

We now deal with the elements of the family ($\calR_2)$
through the next three lemmas.

\begin{lemma} \label{lem:IHX2}
The elements 
$\IHXtwo{a_1}{a_2}{a_3}{b_1}$  and $\IHXtwo{a_1}{a_2}{a_3}{a_4}$
belong to $\ker(J)$.
\end{lemma}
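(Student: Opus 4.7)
The plan is to realize both $\IHXtwo$-relations as $\tau_1$-images of commutator identities that take place in $\Gamma_3\calI$, and then to quotient away any ``error terms'' using the already-established Lemmas \ref{lem:0} and \ref{lem:1} (i.e., $K\cap U_0, K\cap U_1 \subset \ker J$). Let me describe the structure first. For $\IHXtwo{x}{y}{p}{c}$, the first summand $\Com{x}{p}{\ov p}{y}{p}{\ov p}$ is (up to sign) the wedge of $\tau_1$ of two genus-$1$ BP-maps whose genus-$1$ pieces both contain the handle with symplectic basis $\{p,\ov p\}$, while the second summand $\Com{x}{y}{c}{p}{\ov p}{\ov c}$ is the wedge of $\tau_1$ of a PB-map (or BP-map, when $x,y,c$ share an index) and a genus-$1$ BP-map with handle $\{p,\ov p\}$ and cobounding class $\ov c$. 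So the identity predicts an equality modulo $\Gamma_3\calI$ between two different brackets both supported near the handle $\{\alpha_3,\beta_3\}$.

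I would fix a genus-$4$ subsurface $\Sigma_0\subset\Sigma$ containing $\alpha_1,\beta_1,\alpha_2,\beta_2,\alpha_3,\beta_3,\alpha_4,\beta_4$ and construct, entirely inside $\Sigma_0$, two BP-maps $\phi_1=T_{\varepsilon_1}T_{\delta_1}^{-1}$ and $\phi_2=T_{\varepsilon_2}T_{\delta_2}^{-1}$ of genus~$1$ whose genus-$1$ pieces share the handle with basis $\{\alpha_3,\beta_3\}$, with $\tau_1(\phi_1)=\pm\, a_1\wedge a_3\wedge b_3$ and $\tau_1(\phi_2)=\pm\, a_2\wedge a_3\wedge b_3$, so that by \eqref{eq:BP} the commutator $[\phi_1,\phi_2]$ represents $\pm\Com{a_1}{a_3}{b_3}{a_2}{a_3}{b_3}$ in $\Gamma_2\calI/\Gamma_3\calI$. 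The task is then to find a second expression for (a conjugate of) $[\phi_1,\phi_2]$ using a PB-map (resp. BP-map) $\psi$ with $\tau_1(\psi)=\pm\,a_1\wedge a_2\wedge a_4$ (resp. $\pm\,a_1\wedge a_2\wedge b_1$) and a BP-map $\psi'$ realizing $\pm\,a_3\wedge b_3\wedge b_4$ (resp. $\pm\,a_3\wedge b_3\wedge a_1$). The most natural mechanism is a Hall--Witt identity applied to three elements supported in a cylinder $D\times[0,1]$ over a multi-holed disk, exactly in the spirit of the proof of the $\IHXone$-relation already given in this paper, but with one of the three Dehn-twist pairs replaced by the PB-map $\psi$; then taking commutators with appropriate squares of bounding-pair correction factors produces identities in $\Gamma_3\calI$ whose $\tau_1$-images differ from $\IHXtwo{a_1}{a_2}{a_3}{a_4}$ (resp. $\IHXtwo{a_1}{a_2}{a_3}{b_1}$) by elements of $K\cap U_0$ and $K\cap U_1$. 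An alternative is to apply Proposition \ref{prop:push-push}(2) with $k=3$ and $w=T_\delta$ a Dehn twist along a bounding curve intersecting the handle $\{\alpha_3,\beta_3\}$, giving automatically $[T_\delta, T_{\gamma_+}T_{\gamma_-}^{-1}]\in\Gamma_3\calI$; one then rewrites this commutator as a product of the two BP-map commutators appearing in $\IHXtwo$.

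The main obstacle will be the bookkeeping: to arrange the topological configuration so that the Hall--Witt (or Push) identity, after passage to $\tau_1$ and expansion via \eqref{eq:BP} and \eqref{eq:PB}, produces precisely the $\IHXtwo$ combination plus remainder terms of shape $\Com{\cdots}{\cdots}{\cdots}{\cdots}{\cdots}{\cdots}$ having no mixed contraction or exactly one mixed contraction. Any remainder lying in $U_2$ or $U_3$ would be of the same $\IHXtwo$-type and would reintroduce the very relations we are trying to establish, making the argument circular. Thus the heart of the proof is the careful choice of the six curves $\varepsilon_i,\delta_i,\ldots$ on $\Sigma_0$ so that all error terms are automatically killed by Lemma~\ref{lem:0} or Lemma~\ref{lem:1}. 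Once this is achieved, the conclusion is immediate, and since Lemma~\ref{lem:a_1,a_2} guarantees that $\IHXtwo{a_1}{a_2}{a_3}{a_4}$ and $\IHXtwo{a_1}{a_2}{a_3}{b_1}$ are the only ``IHX2-type'' generators needed in $(\calR_2)$, the two cases above complete the analysis of $K\cap U_2$ modulo the $\Triple{}{}{}{}$ and $\Square{}{}{}{}{}{}$ relations handled separately.
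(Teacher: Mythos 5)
Your proposal sketches two possible routes --- a Hall--Witt identity as in the $\mathsf{IHX}_1$ case, or a Push argument as in Theorem~A --- but carries neither one out: you explicitly defer ``the careful choice of the six curves'' that would kill the error terms, and that choice is never made, so what you have is a plan rather than a proof. Moreover, the strategy is somewhat misdirected: trying to realize the whole $\mathsf{IHX}_2$ combination through a single commutator identity is exactly the bookkeeping nightmare you anticipate, and a stray remainder in $U_2$ would make the argument circular.

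The paper's actual proof bypasses all of this after one simple observation: a BP-map of genus $\geq 2$ already has a $\tau_1$-value that is a \emph{sum} of elementary trivectors. Concretely, wedging a genus-$1$ BP-map realizing $a_1\wedge a_2\wedge b_2$ against a disjointly supported genus-$2$ BP-map realizing $a_3\wedge a_1\wedge b_1 + a_3\wedge a_2\wedge b_2$ directly yields
\[
W:=\Com{a_1}{a_2}{b_2}{a_3}{a_1}{b_1}+\Com{a_1}{a_2}{b_2}{a_3}{a_2}{b_2}\in\ker(J),
\]
with \emph{no} error terms at all. The transposition $(2\,3)\in G$ then maps $W$ to $\IHXtwo{a_1}{a_2}{a_3}{b_1}$, giving the first case for free. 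For the second, one applies the cycle $(1\,4\,2\,3)\in G$ to $W$, then the transvection $T:(a_4,b_2)\mapsto(a_4+a_2,\,b_2-b_4)$, and reduces modulo $K\cap U_0$ using only Lemma~\ref{lem:0} --- no Hall--Witt identity, no Push map, and not even Lemma~\ref{lem:1}. The key mechanism you are missing is that higher-genus BP-maps supply ready-made linear relations in $\tau_1$, so a disjointly supported configuration already contains the $\mathsf{IHX}_2$ relation up to the action of $G$ and $\Sp(H)$.
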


\begin{proof}
For all $h\in \{3,\dots, g\}$, the element
$
\sum_{i=1}^{h-1} \Com{a_1}{a_2}{b_2}{a_h}{a_i}{b_i}
$
of $\Lambda^2(\Lambda^3 H)$
belongs to $\ker(J)$, since ${a_1}\wedge{a_2}\wedge{b_2}$
and $\sum_{i=1}^{h-1} {a_h}\wedge{a_i}\wedge{b_i}$ 
can be realized by BP maps (of genus $1$ and $h-1$, respectively)
whose supports are disjoint one from the other.
Setting $h:=3$, we obtain that
\begin{equation} \label{eq:W}
W:= \Com{a_1}{a_2}{b_2}{a_3}{a_1}{b_1}
+  \Com{a_1}{a_2}{b_2}{a_3}{a_2}{b_2}    
\end{equation}
belongs to $\ker(J)$.
The element of $G \simeq \Z_4^g \rtimes   \mathfrak{S}_g$
corresponding to the transposition $(23)$
maps $W$ to $\IHXtwo{a_1}{a_2}{a_3}{b_1}$ 
which, therefore, is  in $\ker(J)$ too.

Next, the element of $G$ corresponding to the cycle $(1423)$
transforms $W$ to 
$$ 
E := \Com{a_4}{a_3}{b_3}{a_1}{a_3}{b_3} + \Com{a_4}{a_3}{b_3}{a_1}{a_4}{b_4}
$$
which, therefore, belongs to $\ker(J)$.
Let also $T$ be the symplectic transformation
that maps $(a_4,b_2)$ to  $( a_4 + a_2 ,  b_2 - b_4)$ 
and leaves all other elements of $S$ fixed.
 We have 
 \begin{eqnarray*}
 {T} \cdot E - E   & \equiv & \Com{a_2}{a_3}{b_3}{a_1}{a_3}{b_3}
 + \Com{a_4}{a_3}{b_3}{a_1}{a_2}{b_4}  \mod K \cap U_0.
 \end{eqnarray*}
Hence, using Lemma \ref{lem:0} and the $G$-action, we obtain that 
$$
\IHXtwo{a_1}{a_2}{a_3}{a_4} = 
\Com{a_1}{a_3}{b_3}{a_2}{a_3}{b_3} 
- \Com{a_1}{a_2}{a_4}{a_3}{b_3}{b_4}
\ \in \ker(J).
$$

\up
\end{proof}

\begin{lemma}
The following five elements belong to $\ker(J)$:
$$
\hbox{\footnotesize 
$\Square{a_1}{a_2}{a_2}{b_1}{a_3}{b_3}$,
$\Square{a_1}{a_2}{a_2}{a_4}{a_3}{b_3}$,
$\Square{a_1}{a_2}{a_4}{a_5}{a_3}{b_3}$,
$\Square{a_1}{a_2}{b_1}{a_4}{a_3}{b_3}$,
$\Square{a_1}{a_2}{a_4}{b_4}{a_3}{b_3}$.}
$$
\end{lemma}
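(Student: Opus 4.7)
The plan is to extend the strategy already used for the $\IHXone$ and $\IHXtwo$ relations (Lemma \ref{lem:IHX2} together with the topological IHX argument given above) to each of the five Square elements in turn. For each Square $S$, I will exhibit an auxiliary element of $\Lambda^2(\Lambda^3 H)$ that is manifestly in $\ker(J)$ because it arises as the first Johnson image of a commutator in $\calI$ of BP- and/or PB-maps with \emph{disjoint} supports (hence commuting in $\calI$, and therefore in $\Gamma_3 \calI$); the Square $S$ is then extracted from the auxiliary element by applying an element of $\Sp(H)$ (possibly outside $G$) and reducing modulo the submodule $K\cap U_0 \oplus K\cap U_1$, which already lies in $\ker(J)$ by Lemmas \ref{lem:0} and \ref{lem:1}.

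I would begin with $\Square{a_1}{a_2}{a_4}{b_4}{a_3}{b_3}$, the most symmetric case: both $\{a_4,b_4\}$ and $\{a_3,b_3\}$ are genuine symplectic pairs, disjoint from each other and from $\{a_1,a_2,b_1,b_2\}$. The natural realization is via a Hall--Witt identity (as in the proof of $\IHXone$) applied to three BP- or PB-maps supported inside the genus-four subsurface spanned by $\alpha_1,\beta_1,\ldots,\alpha_4,\beta_4$, arranged so that any two of the three supports share only a single handle. Expanding the resulting cubic commutator identity in $\Gr_2^\Gamma \calI$ by means of \eqref{eq:BP}--\eqref{eq:PB} should produce the four Square terms, together with correction terms lying in $K\cap U_0 \oplus K\cap U_1$ which drop out by Lemmas \ref{lem:0} and \ref{lem:1}.

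From this base case, the remaining four Squares should follow by applying suitable symplectic transformations $T\in \Sp(H)\setminus G$ that substitute one symplectic letter for another and by absorbing the discrepancy into the kernel using Lemmas \ref{lem:D}, \ref{lem:D_bis} and \ref{lem:IHX2}. Concretely: $\Square{a_1}{a_2}{a_4}{a_5}{a_3}{b_3}$ is obtained from the base case by a transformation sending $b_4 \mapsto a_5$ (or $b_4 \mapsto b_4 + a_5$), the discrepancy being a combination of $\Diff$-elements; $\Square{a_1}{a_2}{b_1}{a_4}{a_3}{b_3}$ by a transformation mixing $a_4$ with $b_1$; and the two ``repeated-index'' Squares $\Square{a_1}{a_2}{a_2}{b_1}{a_3}{b_3}$ and $\Square{a_1}{a_2}{a_2}{a_4}{a_3}{b_3}$ by transformations that identify or mix a handle with the handle labeled $2$, with the discrepancy expressible via $\IHXtwo{\cdot}{\cdot}{\cdot}{\cdot}$ (already known to be in $\ker J$ by Lemma \ref{lem:IHX2}).

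The main obstacle I anticipate is precisely these two ``degenerate'' Squares where the interior letter $a_2$ coincides with the outer letter $y$: here the straightforward $\Sp(H)$-substitution from the base case produces correction terms that do not obviously simplify to $\Diff$- or $\IHXtwo$-elements, since the relevant BP- or PB-map configurations begin to overlap in the $2$-nd handle. My fallback plan is to treat each degenerate case with a separate Hall--Witt argument in a genus-three subsurface that exploits the coincidence $y=a_2=p$, producing the Square directly, rather than deriving it from the non-degenerate case. Once all five Squares are in $\ker(J)$, the proof that $K\cap U_2 \subset \ker(J)$ will be complete via the $G$-equivariance of $J$ and the fact that $(\calR_2)$ is a $G$-generating system of $K\cap U_2$ (together with Lemma \ref{lem:IHX2} and the still-to-be-treated $\Triple{a_1}{a_2}{a_3}{a_4}$).
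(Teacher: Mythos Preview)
Your plan reverses the order that the paper actually uses, and this reversal is where your anticipated difficulties come from. The paper does \emph{not} begin with the symmetric element $\Square{a_1}{a_2}{a_4}{b_4}{a_3}{b_3}$ and does not use a Hall--Witt identity for any of the five Squares. Instead it starts with the two ``degenerate'' cases and derives the generic ones from them.

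Concretely, the paper first takes the element
\[
W = \Com{a_1}{a_2}{b_2}{a_3}{a_1}{b_1} + \Com{a_1}{a_2}{b_2}{a_3}{a_2}{b_2}
\]
already produced in the proof of Lemma~\ref{lem:IHX2} (it is $\tau_1$ of a commutator of two disjoint BP-maps, no Hall--Witt needed), applies the transvection $T_0:(a_2,a_3)\mapsto(a_2+b_3,a_3+b_2)$, and reads off $\Square{a_1}{a_2}{a_2}{b_1}{a_3}{b_3}$ modulo a single $\Diff$ term and a $G$-translate of $W$. For the second degenerate case it starts from $D=\Diff{a_1}{a_3}{a_2}{a_4}{a_2}{a_4}\in\ker(J)$ (Lemma~\ref{lem:D}), applies $T_1:(a_3,a_4)\mapsto(a_3+b_4,a_4+b_3)$, and extracts $\Square{a_1}{a_2}{a_2}{a_4}{a_3}{b_3}$ modulo $K\cap U_0 + K\cap U_1$. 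The three non-degenerate Squares are then obtained \emph{from} the degenerate ones by further transvections (e.g.\ $(a_2,b_5)\mapsto(a_2+a_5,b_5-b_2)$ yields the $a_4,a_5$ case from the $a_2,a_4$ case), with the symmetric element $\Square{a_1}{a_2}{a_4}{b_4}{a_3}{b_3}$ handled last and separately by hitting an element of $K\cap U_0$ with $T_1$.

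The gap in your proposal is the unspecified Hall--Witt configuration for the symmetric Square: you would need three maps whose pairwise commutators contribute exactly the four $V_2$ terms of the Square plus only $U_0\cup U_1$ garbage, and it is not clear such a configuration exists. The paper sidesteps this entirely. More importantly, your worry that the degenerate cases are the hard ones is misplaced: they are in fact the \emph{entry points}, because the repetition $p=a_2=y$ allows one to reach them directly from $W$ and from a $\Diff$ element by a single transvection, whereas going from generic to degenerate would require collapsing handles and produces exactly the uncontrolled overlaps you feared.
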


\begin{proof}
Consider the element $W\in \ker(J)$ defined by \eqref{eq:W},
and let $T_0$ be the symplectic transformation that maps
$(a_2,a_3)$ to $(a_2+b_3,a_3+b_2)$. We have
$$
T_0\cdot W-W = \Square{a_1}{b_2}{b_2}{b_1}{b_3}{a_3}
+ \Diff{b_3}{b_2}{a_1}{a_3}{b_2}{a_1}+ W'
$$
where 
$W':=\Com{a_1}{a_2}{b_2}{a_3}{b_3}{b_2}
-\Com{a_1}{b_3}{a_3}{b_2}{a_3}{b_3}$.
But, an appropriate element of $G$ transforms $W'$ to $W$,
and we deduce from Lemma \ref{lem:D}
that  $\Square{a_1}{b_2}{b_2}{b_1}{b_3}{a_3}$ 
belongs to $\ker(J)$.
Using again the $G$-action, we conclude that 
$\Square{a_1}{a_2}{a_2}{b_1}{a_3}{b_3}$ is in $\ker(J)$ too.

By Lemma \ref{lem:D}, 
$D:=\Diff{a_1}{a_3}{a_2}{a_4}{a_2}{a_4}$ belongs
to $\ker(J)$.
For the symplectic transformation $T_1$
that maps $(a_3,a_4)$ to $(a_3+b_4,a_4+b_3)$ 
and fixes all other elements of $S$,
we have
\begin{eqnarray*}
T_1\cdot D -D & \equiv & D_1 \mod (K\cap U_0 + K\cap U_1)    
\end{eqnarray*}
where 
\begin{eqnarray*}
D_1 &:=&
\Com{a_1}{a_3}{a_2}{b_2}{a_2}{b_3} + \Com{a_1}{b_4}{a_2}{b_2}{a_2}{a_4} \\
&& \quad - \Com{a_1}{a_3}{a_4}{b_4}{a_2}{b_3} - \Com{a_1}{b_4}{a_4}{b_4}{a_2}{a_4}
\ =  \ -\Square{a_1}{a_2}{a_2}{a_3}{b_4}{a_4}.
\end{eqnarray*}
Hence, by Lemma \ref{lem:0} and Lemma \ref{lem:1},
$D_1$ belongs to $\ker(J)$.
Using the $G$-action, we deduce that
$\Square{a_1}{a_2}{a_2}{a_4}{a_3}{b_3}$ does too.


Proceeding with the same strategy, we consider 
the symplectic transformation $T_3$
mapping  $(a_2,b_5)$ to $(a_2+a_5,b_5-b_2)$
and fixing all other elements of $S$.
We have
\begin{eqnarray*}
&& T_3\cdot \Square{a_1}{a_2}{a_2}{a_4}{a_3}{b_3}
-  \Square{a_1}{a_2}{a_2}{a_4}{a_3}{b_3} \\
& = & 
D_3 + \Diff{a_1}{a_5}{a_3}{a_4}{a_2}{b_2}
+ \Diff{a_1}{a_5}{a_3}{a_4}{a_5}{b_2}
\end{eqnarray*}
where 
\begin{eqnarray*}
D_3 &:=& - \Com{a_1}{a_2}{a_4}{a_5}{b_2}{b_4}
-\Com{a_1}{a_4}{b_3}{a_5}{b_4}{a_3} 
\\
&& \quad + \Com{a_1}{a_3}{b_3}{a_5}{b_3}{a_3}
+ \Com{a_1}{a_2}{a_3}{a_5}{b_2}{b_3} 
\ = \ \Square{a_1}{a_5}{a_2}{a_4}{a_3}{b_3}.
\end{eqnarray*}
Therefore, using the $G$-action, we deduce that
$\Square{a_1}{a_2}{a_4}{a_5}{a_3}{b_3}$ is in $\ker(J)$.

We continue with  the symplectic transformation $T_4$
mapping  $(a_2,b_4)$ to $(a_2+a_4,b_4-b_2)$
and fixing all other elements of $S$.
We have
\begin{eqnarray*}
&& T_4\cdot \Square{a_1}{a_2}{a_2}{b_1}{a_3}{b_3}
-  \Square{a_1}{a_2}{a_2}{b_1}{a_3}{b_3} \\
& = &  D_4
- \Diff{a_1}{a_4}{a_3}{b_1}{a_2}{b_2}
- \Diff{a_1}{a_4}{a_3}{b_1}{a_4}{b_2}
\end{eqnarray*}
where 
\begin{eqnarray*}
D_4 &:=& - \Com{a_1}{a_2}{b_1}{a_4}{b_2}{a_1}
-\Com{a_1}{b_1}{b_3}{a_4}{a_1}{a_3}
\\
&& \quad - \Com{a_1}{a_3}{b_3}{a_4}{b_3}{a_3}
- \Com{a_1}{a_2}{a_3}{a_4}{b_2}{b_3}
\ = \ \Square{a_1}{a_4}{a_2}{b_1}{a_3}{b_3}.
\end{eqnarray*}
Therefore, using the $G$-action, we deduce that
$\Square{a_1}{a_2}{b_1}{a_4}{a_3}{b_3}$ is in $\ker(J)$.

Finally, using again the above  transformation $T_1$,
 an elementary computation shows that the difference
$ T_1\cdot \Com{a_1}{a_3}{a_4}{a_2}{a_3}{a_4} 
- \Com{a_1}{a_3}{a_4}{a_2}{a_3}{a_4}$ decomposes as
$$
\Com{a_1}{b_4}{b_3}{a_2}{a_3}{a_4}
+\Com{a_1}{b_4}{a_4}{a_2}{b_4}{a_4}
+\Com{a_1}{a_3}{b_3}{a_2}{a_3}{b_3} 
+\Com{a_1}{a_3}{a_4}{a_2}{b_4}{b_3}
$$
plus a sum of 11 terms  which belongs to $K\cap U_0 + K\cap U_1$.
Using Lemma \ref{lem:0} and Lemma \ref{lem:1},
we conclude that
$\Square{a_1}{a_2}{a_4}{b_4}{a_3}{b_3}$ belongs to $\ker(J)$.
\end{proof}

\begin{lemma}
We have $\Triple{a_1}{a_2}{a_3}{a_4} \in \ker(J)$.
\end{lemma}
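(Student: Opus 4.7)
The plan is to adapt the strategy of the preceding Square-element proofs: start from an element $\xi$ known to lie in $\ker(J)$, apply a carefully chosen symplectic transvection $T\in \Sp(H)$, and show that $T\cdot \xi - \xi$ reduces to $\Triple{a_1}{a_2}{a_3}{a_4}$ modulo $K\cap U_0$ (Lemma \ref{lem:0}), $K\cap U_1$ (Lemma \ref{lem:1}), the $\Diff$-type elements (Lemmas \ref{lem:D} and \ref{lem:D_bis}), and the $G$-orbits of the $\Square$ and $\IHXtwo$ elements already handled in this subsection.

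Since $\Triple{a_1}{a_2}{a_3}{a_4}$ lies in $U_{a_1,a_1}$, meaning that both elementary trivectors of each summand share the factor $a_1$, a natural starting element is $\IHXone{a_1}{a_2}{a_3}{a_4}{b_1} \in K\cap U_1 \subset \ker(J)$, whose three summands exhibit the same ``$a_1$-in-both-trivectors'' pattern. I would take a transvection of the form $T_v(x) = x + \omega(v,x)\,v$ with $v$ a short integral combination such as $v = b_1 + a_i$ (or a small compound of such transvections), chosen so that the single $b_1$-factor appearing in each summand of $\IHXone{a_1}{a_2}{a_3}{a_4}{b_1}$ is converted, under multilinear expansion, into the combinations $b_2, b_3, b_4$ and $a_4$ that feature in the three elementary trivectors of $\Triple{a_1}{a_2}{a_3}{a_4}$. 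After expanding $T_v\cdot\IHXone{a_1}{a_2}{a_3}{a_4}{b_1}$ and subtracting $\IHXone{a_1}{a_2}{a_3}{a_4}{b_1}$ itself, the $V_2$-part of the result should combine (up to sign) into $\Triple{a_1}{a_2}{a_3}{a_4}$, while the remaining terms should lie in $K\cap U_0$, $K\cap U_1$, or the $G$-orbits of the $\Diff$, $\Square$ and $\IHXtwo$ elements already shown to belong to $\ker(J)$.

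The principal obstacle is the combinatorial bookkeeping: a single transvection produces many intermediate elementary wedges, and one application may not suffice to recover $\Triple{a_1}{a_2}{a_3}{a_4}$ cleanly. As in the proof of $\Square{a_1}{a_2}{a_4}{a_5}{a_3}{b_3}$ just above---which had to be reached via the intermediate $\Square{a_1}{a_2}{a_2}{a_4}{a_3}{b_3}$ and several auxiliary $\Diff$ corrections---I expect to iterate the construction with two or three successive transvections, reducing modulo the previously-handled submodules at each step. Once such a sequence of transvections is chosen, the final verification amounts to term-by-term matching in the elementary basis of $\Lambda^2(\Lambda^3 H)$, followed by the usual $G$-action to conclude.
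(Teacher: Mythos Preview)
Your general strategy---apply a symplectic transvection to an element already known to lie in $\ker(J)$ and analyse the difference---is exactly what the paper does, but what you have written is a plan rather than a proof: you never commit to a specific transvection, you never carry out the expansion, and you explicitly anticipate needing several iterations and substantial bookkeeping. As it stands there is nothing to verify.

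More to the point, your choice of starting element makes life harder than necessary. The element $\IHXone{a_1}{a_2}{a_3}{a_4}{b_1}$ lives in $U_1=V_{1,0}$ (each summand has a single mixed contraction $\{b_1,a_1\}$), whereas $\Triple{a_1}{a_2}{a_3}{a_4}$ lives in $V_2\subset U_2$. To pass from one to the other via transvections you must manufacture a \emph{second} mixed contraction in every summand, and this inevitably spawns many side terms to be absorbed into $K\cap U_0$, $K\cap U_1$, and the $\mathsf{D}$/$\mathsf{S}$/$\mathsf{IHX}_2$ pieces. That is doable in principle, but it is precisely the ``combinatorial bookkeeping'' you flag as the obstacle.

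The paper avoids this entirely by choosing a starting element that is already in $U_2$: it applies the single transvection $T$ sending $(a_2,b_1)\mapsto(a_2+a_1,\,b_1-b_2)$ (and fixing the rest of $S$) to $\Square{a_1}{a_2}{a_2}{a_4}{a_3}{b_3}$, which has just been shown to lie in $\ker(J)$. The difference $T\cdot\Square{a_1}{a_2}{a_2}{a_4}{a_3}{b_3}-\Square{a_1}{a_2}{a_2}{a_4}{a_3}{b_3}$ turns out to be \emph{exactly} $-\Triple{a_1}{a_2}{a_4}{a_3}$, with no correction terms whatsoever; one then finishes with the $G$-action. So the right move is to stay inside $U_2$ from the start rather than climb up from $U_1$.
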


\begin{proof}
Let $T$ be the symplectic transformation
that maps $(a_2,b_1)$ to 
$(a_2 + a_1, b_1 - b_2)$ and leaves all other elements of $S$ fixed.
We have 
\begin{eqnarray*}
&&T \cdot \Square{a_1}{a_2}{a_2}{a_4}{a_3}{b_3}  -  \Square{a_1}{a_2}{a_2}{a_4}{a_3}{b_3}\\
&=& \Com{a_1}{a_2}{a_3}{a_1}{b_2}{b_3}
- \Com{a_1}{a_2}{a_4}{a_1}{b_2}{b_4}
- \Com{a_1}{a_4}{b_3}{a_1}{b_4}{a_3},
\end{eqnarray*}
which is $-\Triple{a_1}{a_2}{a_4}{a_3}$.
Using the $G$-action, 
we obtain that 
$\Triple{a_1}{a_2}{a_3}{a_4} \in \ker(J)$.
\end{proof}

Since $(\calR_2)$ is a $G$-generating system of $K\cap U_2$, 
we obtain the following.

\begin{lemma} \label{lem:2}
The submodule $K\cap U_2$ of $\Lambda^2(\Lambda^3H)$
is contained in $\ker(J)$.
\end{lemma}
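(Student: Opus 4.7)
The plan is essentially a one-step harvesting argument: all the substantive work has already been carried out in the three preceding lemmas, and what remains is to package it through the $G$-action. First, I would recall that by Theorem~\ref{th:K}, the eight elements listed in $(\calR_2)$, namely the five $\mathsf{S}$-elements $\Square{a_1}{a_2}{a_4}{a_5}{a_3}{b_3}$, $\Square{a_1}{a_2}{a_4}{b_4}{a_3}{b_3}$, $\Square{a_1}{a_2}{a_2}{b_1}{a_3}{b_3}$, $\Square{a_1}{a_2}{a_2}{a_4}{a_3}{b_3}$, $\Square{a_1}{a_2}{b_1}{a_4}{a_3}{b_3}$, together with $\Triple{a_1}{a_2}{a_3}{a_4}$ and the two $\mathsf{IHX}_2$ elements, form a $G$-generating system of the submodule $K\cap U_2$.

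Next, I would invoke the three preceding lemmas, which show that each of these eight elements lies in $\ker(J)$: Lemma~\ref{lem:IHX2} handles the two $\mathsf{IHX}_2$-elements, the subsequent lemma handles the five $\Square$-elements (through a bootstrapping that starts from the relation $W\in\ker(J)$ arising from two disjoint BP-maps and then successively applies well-chosen symplectic transformations, absorbing error terms via Lemmas~\ref{lem:0}, \ref{lem:1}, \ref{lem:D} and \ref{lem:D_bis}), and the final lemma treats $\Triple{a_1}{a_2}{a_3}{a_4}$ by a symplectic transformation applied to one of the $\Square$-elements already shown to lie in $\ker(J)$.

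Finally, I would conclude by invoking $G$-equivariance of $J$. Since $J:\Lie_{\geq 2}(\Lambda^3 H)\to \Gr^\Gamma_{\geq 2}\calI$ is induced from $\tau_1^{-1}$ composed with the canonical map to the associated graded, and both $\tau_1$ and the lower central series quotients are equivariant under the action of $\Sp(H)$ (and hence under its subgroup $G=G(S)$), the kernel $\ker(J)$ is a $G$-submodule of $\Lambda^2(\Lambda^3 H)$. Therefore, the $G$-submodule generated by the eight elements of $(\calR_2)$ is contained in $\ker(J)$. Combined with the first step, this shows $K\cap U_2 \subset \ker(J)$.

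There is no real obstacle at this stage: all the computational difficulty has already been dispatched in the earlier lemmas of this subsection. The only thing to double-check is that the generating system provided by Theorem~\ref{th:K} for $K\cap U_2$ coincides with the list of elements for which the preceding lemmas established membership in $\ker(J)$, which is a purely bookkeeping verification.
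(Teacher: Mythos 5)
Your proposal is correct and follows exactly the paper's argument: the paper's proof of this lemma is the single sentence that $(\calR_2)$ is a $G$-generating system of $K\cap U_2$, with the membership of each of the eight elements of $(\calR_2)$ in $\ker(J)$ having been established in the three preceding lemmas and the conclusion following from the $G$-equivariance of $J$.
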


\subsubsection{Vanishing of $J$ on the elements  ($\calR_3)$.}

We deduce from Lemma~\ref{lem:D} and Lemma~\ref{lem:D_bis}
that the first two elements of ($\calR_3)$
belong to $\ker(J)$.
The following  deals with the
last two elements of ($\calR_3)$,
and finishes the proof of Theorem~B.

\begin{lemma}
The elements $\IHXthree{a_1}{a_2}{a_3}$ and
  $\IHXthreep{a_1}{a_2}{a_3}{a_4}$ belong to $\ker(J)$.
\end{lemma}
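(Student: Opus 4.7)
The plan is to extend the strategy deployed throughout Section \ref{subsec:vanishing}: pick a carefully chosen symplectic transformation $T \in \Sp(H) \setminus G$ (typically a transvection mixing two basis pairs) and apply it to an element $X$ already shown to belong to $\ker(J)$, exploiting the fact that $T \cdot X - X$ remains in $\ker(J)$. Since $K \cap U_0$, $K \cap U_1$, and $K \cap U_2$ are all contained in $\ker(J)$ by Lemmas \ref{lem:0}, \ref{lem:1}, and \ref{lem:2}, it suffices to show that, modulo those three submodules, the difference $T \cdot X - X$ equals a non-zero scalar multiple of the target element, up to a further application of the $G$-action.

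For $\IHXthree{a_1}{a_2}{a_3}$: I would start from the element $\Triple{a_1}{a_2}{a_3}{a_4} \in \ker(J)$ (or alternatively from $\IHXtwo{a_1}{a_2}{a_3}{a_4}$) and apply a transvection such as the one sending $a_4 \mapsto a_4 + b_3$, which introduces a self-contraction mixing the third and fourth basis pairs. Expanding $T \cdot \Triple{a_1}{a_2}{a_3}{a_4} - \Triple{a_1}{a_2}{a_3}{a_4}$ and decomposing according to $\Lambda^2(\Lambda^3 H) = U_0 \oplus U_1 \oplus U_2 \oplus U_3$, the $U_3$-component should assemble (up to sign and the $G$-action) into the desired $\IHXthree{a_1}{a_2}{a_3}$, while the components in $U_0, U_1, U_2$ can be discarded using Lemmas \ref{lem:0}-\ref{lem:2}.

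For $\IHXthreep{a_1}{a_2}{a_3}{a_4}$: Once $\IHXthree$ is known to lie in $\ker(J)$, the natural approach is to apply a transvection bringing the fourth basis pair $(a_4, b_4)$ into play, say $a_4 \mapsto a_4 + a_1$ or $b_4 \mapsto b_4 - b_1$, to either $\IHXthree{a_1}{a_2}{a_3}$ itself or to one of the $\Square$-type elements from $(\calR_2)$. The six-term structure of $\IHXthreep{p}{q}{r}{s}$ (three pairs of terms corresponding to the three ways one can cyclically reassociate among $p,q,r,s$) suggests unfolding a simpler three-term identity via such a transvection.

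The main obstacle is the combinatorial bookkeeping: each candidate transvection produces on the order of twenty terms upon expansion, and one must verify that the non-$U_3$ contributions genuinely lie in $K \cap (U_0 \oplus U_1 \oplus U_2)$, not merely in $U_0 \oplus U_1 \oplus U_2$. Finding the precise pair $(T, X)$ such that the $U_3$-part matches the target up to the $G$-action requires some trial and error, but there is no conceptual difficulty once Lemmas \ref{lem:0}-\ref{lem:2} and the $\Diff, \Triple, \Square, \IHXtwo$ identities are in hand, together with Lemma \ref{lem:D_bis} to handle any residual $\Diff$-type terms.
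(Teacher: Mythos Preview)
Your plan is exactly the paper's approach: apply a symplectic transvection to an element already known to lie in $\ker(J)$, discard the components in $K\cap(U_0\oplus U_1\oplus U_2)$ via Lemmas~\ref{lem:0}--\ref{lem:2}, and identify the residual $U_3$-part with the target up to $\Diff$-terms and the $G$-action. The paper's specific choices are: for $\IHXthree{a_1}{a_2}{a_3}$, apply $(a_1,a_2)\mapsto(a_1+b_2,a_2+b_1)$ to $\IHXtwo{a_1}{a_2}{a_3}{b_1}$ (your alternative, with $b_1$ rather than $a_4$), obtaining a $U_3$-piece $F_1$ that differs from $-\IHXthree{a_1}{a_2}{a_3}$ by one $\Diff$ term handled via Lemma~\ref{lem:D_bis}; for $\IHXthreep{a_1}{a_2}{a_3}{a_4}$, apply $(a_2,a_3)\mapsto(a_2+b_3,a_3+b_2)$ to $\Square{a_3}{a_2}{a_2}{a_4}{a_1}{b_1}$, whose $U_3$-part decomposes as $\IHXthreep$ plus two $\IHXthree$ terms and three $\Diff$ terms.

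One concrete caveat on your first suggestion: every term of $\Triple{a_1}{a_2}{a_3}{a_4}$ carries $a_1$ on both sides, and a transvection touching only the pairs $3$ and $4$ never produces a $U_3$ component (neither three mixed contractions nor two disjoint self-contractions can appear), so that particular $(T,X)$ does not reach the target; your alternative $\IHXtwo$-starting point, adjusted to involve $b_1$, is the one that works.
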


\begin{proof}
Let $T_1$ be the symplectic transformation that maps
$(a_1,a_2)$ to $( a_1 + b_2,a_2 +b_1)$, and set 
\begin{eqnarray*}
E_1 &:=& \Com{b_2}{a_3}{b_3}{b_1}{a_3}{b_3}
+ \Com{b_2}{a_2}{b_1}{a_3}{b_3}{b_2},\\
F_1 &:=& \Com{a_1}{a_3}{b_3}{b_1}{a_3}{b_3} 
+ \Com{b_2}{a_3}{b_3}{a_2}{a_3}{b_3} \\
&&+ \ \Com{a_1}{a_3}{b_3}{b_1}{a_2}{b_2} + \Com{b_2}{a_3}{b_3}{a_1}{b_1}{a_2}.
\end{eqnarray*}
We have 
\begin{eqnarray*}
 T_1 \cdot \IHXtwo{a_1}{a_2}{a_3}{b_1}  - \IHXtwo{a_1}{a_2}{a_3}{b_1}
&= &  E_1 + F_1 .
\end{eqnarray*}
Since $E_1$ belongs to $K \cap(V_{1,1}+V_2)=K \cap U_2$,
it follows from   Lemma \ref{lem:IHX2} and Lemma \ref{lem:2}
that $F_1$ belongs to $\ker(J)$.
Besides, we have 
$$
\IHXthree{a_1}{a_2}{a_3} 
= - F_1 + 
\underbrace{\Com{a_3}{b_3}{a_1}{b_1}{a_2}{b_2} 
+ \Com{a_3}{b_3}{b_1}{a_1}{a_2}{b_2}}_{
\mathsf{D}\left({a_3},{b_3}\,;\,{a_1}
,{b_1}\,;\,{a_2},{b_2}\right)
}.
$$
Thus, an application of Lemma \ref{lem:D_bis}
shows that $J$ vanishes on $\IHXthree{a_1}{a_2}{a_3}$.

Finally, consider the symplectic transformation $T_2$
that maps $(a_2,a_3)$ to $(a_2 + b_3,a_3 +b_2)$
and fixes all other elements of $S$.
We have 
$$
T_2\cdot \Square{a_3}{a_2}{a_2}{a_4}{a_1}{b_1} 
- \Square{a_3}{a_2}{a_2}{a_4}{a_1}{b_1} 
\equiv E_2 \mod (K\cap V_1+K\cap V_2)
$$
where
\begin{eqnarray*}
E_2& := & 
-\Com{b_2}{a_2}{a_4}{a_2}{b_2}{b_4} 
-\Com{a_3}{b_3}{a_4}{a_2}{b_2}{b_4}
-\Com{a_3}{a_2}{a_4}{b_3}{b_2}{b_4}\\
&& + \Com{b_2}{a_2}{a_1}{a_2}{b_2}{b_1} 
+ \Com{a_3}{b_3}{a_1}{a_2}{b_2}{b_1}
+ \Com{a_3}{a_2}{a_1}{b_3}{b_2}{b_1}\\
&& -\Com{b_2}{a_4}{b_1}{a_2}{b_4}{a_1} - \Com{a_3}{a_4}{b_1}{b_3}{b_4}{a_1}\\
&& +\Com{b_2}{a_1}{b_1}{a_2}{b_1}{a_1} + \Com{a_3}{a_1}{b_1}{b_3}{b_1}{a_1}.
\end{eqnarray*}
It follows from Lemma \ref{lem:1} and Lemma \ref{lem:2}
that $E_2$ is in $\ker(J)$.
Next, we observe that
\begin{eqnarray*}
E_2
&=&  \IHXthreep{a_1}{a_4}{a_2}{a_3} 
+ \Com{a_3}{b_3}{a_4}{b_4}{a_1}{b_1} - \Com{a_4}{b_4}{a_1}{b_1}{a_2}{b_2}\\
&&  -\Com{b_2}{a_2}{a_4}{a_2}{b_2}{b_4} 
+ \Com{b_2}{a_2}{a_1}{a_2}{b_2}{b_1} \\
&& +\Com{b_2}{a_1}{b_1}{a_2}{b_1}{a_1} 
+ \Com{a_3}{a_1}{b_1}{b_3}{b_1}{a_1} \\
&& -\Com{a_3}{b_3}{a_4}{a_2}{b_2}{b_4}  
+ \Com{a_3}{b_3}{a_1}{a_2}{b_2}{b_1} \\
&=&  \IHXthreep{a_1}{a_4}{a_2}{a_3} 
+ \Com{a_3}{b_3}{a_4}{b_4}{a_1}{b_1} - \Com{a_4}{b_4}{a_1}{b_1}{a_2}{b_2}\\
&& +  \IHXthree{a_1}{a_4}{a_2} - \Com{a_1}{b_1}{a_4}{b_4}{a_2}{b_2} 
+ \Com{a_4}{b_4}{a_1}{b_1}{a_2}{b_2}\\
&& -\IHXthree{a_2}{a_3}{a_1}
+\Com{a_2}{b_2}{a_3}{b_3}{a_1}{b_1}
-\Com{a_3}{b_3}{a_2}{b_2}{a_1}{b_1}\\
&& -\Com{a_3}{b_3}{a_4}{a_2}{b_2}{b_4}  
+ \Com{a_3}{b_3}{a_1}{a_2}{b_2}{b_1} \\
&=& \IHXthreep{a_1}{a_4}{a_2}{a_3} 
+ \IHXthree{a_1}{a_4}{a_2} -\IHXthree{a_2}{a_3}{a_1}\\
&& + \Diff{a_3}{b_3}{a_4}{a_2}{a_1}{b_1}
+ \Diff{a_2}{b_2}{a_3}{b_4}{a_1}{b_1}+\Diff{a_3}{b_3}{a_1}{a_4}{a_2}{b_2}.
\end{eqnarray*}
From the previous paragraph, 
we know that $\IHXthree{a_1}{a_2}{a_3}$ belongs to $\ker(J)$:
so, by the $G$-action, 
$\IHXthree{a_1}{a_4}{a_2}$ and $\IHXthree{a_2}{a_3}{a_1}$ do too. 
Then, it follows from Lemma \ref{lem:D} and the above decomposition of $E_2$ that 
$\IHXthreep{a_1}{a_4}{a_2}{a_3}$ is in $\ker(J)$:
using the $G$-action, so is $\IHXthreep{a_1}{a_2}{a_3}{a_4}$.
\end{proof}

\section{Structure of certain subquotients of the Torelli group}

\label{sec:groups}

In this section, we derive from Theorem~A and Theorem~B
some explicit descriptions of certain quotients of (subgroups of) 
the Torelli group $\calI$.

\subsection{The groups $\Gamma_2 \calI/\Gamma_3 \calI$,
$\calK/\Gamma_3 \calI$ and $\calI/\Gamma_3 \calI$.}

Set $\mathrm{D}'_2(H) := S^2(\Lambda^2 H)/\Lambda^4 H$, and view it as a lattice in ${S^2(\Lambda^2 H^\Q)}/{\Lambda^4 H^\Q}
\simeq \mathrm{D}'_2(H)  \otimes \Q$.
Let also $\mathrm{D}_2(H)$
be the submodule of ${S^2(\Lambda^2 H^\Q)}/{\Lambda^4 H^\Q}$
generated by $\mathrm{D}'_2(H)$
and the elements $\frac{1}{2}(u\wedge v)\cdot(u\wedge v)$
for all $u,v\in H$.
According to \cite{Faes23}, the images 
$\tau_2([\calI,\calI])$ and $\tau_2(\calK)$ 
of the second Johnson homomorphism are determined 
by the following commutative diagram with exact rows:
\begin{equation} \label{eq:traces}
\xymatrix{
[\calI,\calI] \ar@{^{(}->}[d] \ar[r]^-{\tau_2} & 
\mathrm{D}'_2(H) \ar@{^{(}->}[d]
\ar[r]^-{\operatorname{Tr}^S} & 
\ker\big(\omega :S^2H \otimes \Z_2 \to \Z_2\big) \ar[r] \ar[d] & 0 \\
\calK \ar[r]^-{\tau_2} & \mathrm{D}_2(H) 
\ar[r]^-{\operatorname{Tr}^\Lambda} & 
\ker\big(\omega :\Lambda^2H \otimes \Z_2 \to \Z_2\big) \ar[r] & 0
}  
\end{equation}
Here, the map $S^2H \otimes \Z_2 \to \Lambda^2 H \otimes \Z_2$
is the canonical projection, and the ``trace'' maps are defined as follows:
\begin{eqnarray*}
{\operatorname{Tr}^S}\big( (a \wedge b) \cdot (c\wedge d) \big)
&:=& \omega(a,c)\, b\cdot d + \omega(a,d)\, b\cdot c
+   \omega(b,c)\, a\cdot d + \omega(b,d)\, a\cdot c, \\
{\operatorname{Tr}^\Lambda}\big( (a \wedge b) \cdot (c\wedge d) \big)
&:=& \omega(a,c)\, b\wedge d + \omega(a,d)\, b\wedge c
+   \omega(b,c)\, a\wedge d + \omega(b,d)\, a\wedge c, \\
{\operatorname{Tr}^\Lambda}\Big(\frac{1}{2}(u\wedge v)\cdot(u\wedge v)\Big)
&:=& (1+\omega(u,v))\ u\wedge v.
\end{eqnarray*}

Consider a basis $S=~\{a_1,\dots,a_g,b_1,\dots,b_g\}$
of $H$ of the form \eqref{eq:symplectic_basis}, 
and let $\langle -,-\rangle=\langle -,-\rangle_S : H \times H \to \Z$ be the symmetric bilinear form
defined by
$
\langle a_i,b_j\rangle  = \delta_{i,j},
\  \langle a_i,a_j\rangle =0, \ 
\langle b_i,b_j\rangle =0.
$
Let also $\Theta=\Theta_S: {S^2(\Lambda^2 H^\Q)}/{\Lambda^4 H^\Q} \to \Q$ 
be the map defined by 
$$
\Theta\big((a\wedge b) \cdot (c \wedge d)\big)
= \langle a,d\rangle\, \langle b,c\rangle 
- \langle a,c\rangle\, \langle b,d\rangle. 
$$

\begin{lemma}
The restriction of $\Theta$ (respectively, $2\Theta$) 
to $\ker(\operatorname{Tr}^S)$ 
(respectively, to $\ker(\operatorname{Tr}^\Lambda)$)
reduced modulo $4\Z$ does not depend on the choice of the symplectic basis $S$.
\end{lemma}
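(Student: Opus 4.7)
The plan is to expand the difference $\Theta_{S'}(X) - \Theta_S(X)$ directly and exploit the kernel condition through a natural pairing. Since the two symmetric forms $\langle-,-\rangle_{S}$ and $\langle-,-\rangle_{S'}$ associated to symplectic bases both reduce to $\omega$ modulo $2$, their difference is even: $\langle-,-\rangle_{S'} - \langle-,-\rangle_S = 2\gamma$ for some symmetric integer-valued bilinear form $\gamma$ on $H$. Expanding the definition of $\Theta$ then yields
\[
\Theta_{S'}(X) - \Theta_S(X) = 2\, R_\gamma(X) + 4\, Q_\gamma(X),
\]
where, for $X = (a\wedge b)\cdot(c\wedge d)$,
\[
R_\gamma(X) := \langle a,d\rangle_S\, \gamma(b,c) + \gamma(a,d)\, \langle b,c\rangle_S - \langle a,c\rangle_S\, \gamma(b,d) - \gamma(a,c)\, \langle b,d\rangle_S,
\]
and $Q_\gamma(X) := \gamma(a,d)\gamma(b,c) - \gamma(a,c)\gamma(b,d)$. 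A routine check (an IHX-type cyclic-sum identity) shows that $R_\gamma$ descends to $\mathrm{D}'_2(H)$, so modulo $4$ the first claim reduces to showing $R_\gamma(X) \equiv 0 \pmod 2$ for $X \in \ker(\operatorname{Tr}^S)$.

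The key step is the identification, modulo $2$, of $R_\gamma$ with $\psi_\gamma \circ \widetilde{\operatorname{Tr}}^S$, where $\widetilde{\operatorname{Tr}}^S \colon \mathrm{D}'_2(H) \to S^2H$ is the integral lift of $\operatorname{Tr}^S$ (defined by the same formula prior to mod-$2$ reduction), and $\psi_\gamma \colon S^2H \to \Z$ is the $\Z$-linear extension of $u \cdot v \mapsto \gamma(u,v)$ (well-defined precisely because $\gamma$ is symmetric). Indeed, reducing $\langle-,-\rangle_S$ to $\omega$ modulo $2$ in the definition of $R_\gamma$ yields exactly the four monomials $\omega(a,d)\gamma(b,c)$, $\gamma(a,d)\omega(b,c)$, $\omega(a,c)\gamma(b,d)$, $\gamma(a,c)\omega(b,d)$ that arise from $\psi_\gamma \circ \widetilde{\operatorname{Tr}}^S$. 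For $X \in \mathrm{D}'_2(H) \cap \ker(\operatorname{Tr}^S)$, one has $\widetilde{\operatorname{Tr}}^S(X) \in 2\,S^2H$, so $\psi_\gamma(\widetilde{\operatorname{Tr}}^S(X)) \in 2\Z$, hence $R_\gamma(X) \equiv 0 \pmod 2$. This proves $\Theta_{S'}(X) \equiv \Theta_S(X) \pmod 4$ and settles the first claim.

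For the second claim, the argument on $\mathrm{D}'_2(H)$ is automatic: $2(\Theta_{S'} - \Theta_S) \equiv 4 R_\gamma \equiv 0 \pmod 4$ holds without any kernel hypothesis. It then remains to verify the statement on the additional generators $\tfrac{1}{2}(u\wedge v)\cdot(u\wedge v)$ of $\mathrm{D}_2(H)$. There, a direct expansion using the elementary fact that $\langle w,w\rangle_S$ is always even for any $w \in H$ shows that $\Theta_{S'}(X) - \Theta_S(X)$ already lies in $2\Z$, so that $2(\Theta_{S'}(X) - \Theta_S(X)) \in 4\Z$. The main technical point throughout is the clean identification $R_\gamma \equiv \psi_\gamma \circ \widetilde{\operatorname{Tr}}^S \pmod 2$; once it is in place, the kernel condition in the first case and the evenness of $\langle w,w\rangle_S$ in the second case both close the argument.
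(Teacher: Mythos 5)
Your proof is correct, but it follows a genuinely different route from the paper's. The paper first disposes of the $\operatorname{Tr}^\Lambda$ case by noting that $\ker(\operatorname{Tr}^\Lambda)/\ker(\operatorname{Tr}^S)\simeq$ a quotient of the $2$-torsion group $\calK/[\calI,\calI]$, and then, for the $\operatorname{Tr}^S$ case, uses the topological input $\ker(\operatorname{Tr}^S)=\tau_2([\calI,\calI])=\operatorname{im}(B^{(0)})$ to reduce to elements of the explicit form $\sum_{i,j}\omega(x_i,y_j)(x_{i+1}\wedge x_{i+2})\cdot(y_{j+1}\wedge y_{j+2})$, where the difference between $\Theta$ and its manifestly basis-independent $\omega$-analogue (which equals $-3$ times the determinant) is checked to lie in $4\Z$ by expanding $\langle x,y\rangle=\omega(x,y)+2z(x,y)$. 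You instead stay entirely inside linear algebra: writing $\langle-,-\rangle_{S'}-\langle-,-\rangle_S=2\gamma$ and expanding gives $\Theta_{S'}-\Theta_S=2R_\gamma+4Q_\gamma$, and the whole point becomes the identity $R_\gamma\equiv\psi_\gamma\circ\operatorname{Tr}^S\pmod 2$, which makes the vanishing on $\ker(\operatorname{Tr}^S)$ transparent. Your treatment of the second claim is also different and in fact yields a slightly stronger conclusion: $2\Theta$ mod $4$ is basis-independent on \emph{all} of $\mathrm{D}_2(H)$, with no kernel hypothesis, since the only issue on the extra generators $\tfrac12(u\wedge v)\cdot(u\wedge v)$ is handled by the evenness of $\langle w,w\rangle_S$. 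What your approach buys is self-containedness (no appeal to the surjectivity of $B^{(0)}$ onto $\ker(\operatorname{Tr}^S)$, i.e.\ to \cite{Faes23} and Morita's computations) and a conceptual explanation of \emph{why} the trace kernel is the right domain; what the paper's approach buys is brevity, since the generators of $\ker(\operatorname{Tr}^S)$ are already on hand there.

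One small imprecision to fix: the ``integral lift'' $\widetilde{\operatorname{Tr}}^S$ is not well defined on $\mathrm{D}'_2(H)$, because the integral formula sends the relator $(a\wedge b)\cdot(c\wedge d)+(a\wedge c)\cdot(d\wedge b)+(a\wedge d)\cdot(b\wedge c)$ to $2\omega(a,c)\,b\cdot d+2\omega(a,d)\,b\cdot c+2\omega(a,b)\,c\cdot d$, which is nonzero in general. It does, however, land in $2\,S^2H$, so both the condition ``$\widetilde{\operatorname{Tr}}^S(X)\in 2\,S^2H$'' and the composite $\psi_\gamma\circ\widetilde{\operatorname{Tr}}^S\bmod 2$ are independent of the chosen lift to $S^2(\Lambda^2 H)$; phrase the key step at the level of lifts (or of the mod-$2$ reductions) and the argument is complete.
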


\begin{proof}
Since $\calK/[\calI,\calI]$ is $2$-torsion, the abelian group $\ker(\operatorname{Tr}^\Lambda)/\ker(\operatorname{Tr}^S)$ has to be also $2$-torsion, so it suffices to prove the statement
for the restriction of $\Theta$ to $\ker(\operatorname{Tr}^S)$.
Since $\ker(\operatorname{Tr}^S)=\tau_2([\calI,\calI])$
is the image of the map $B^{(0)}: 
\Lambda^2 (\Lambda^3H) \to \mathrm{D}'_2(H)$
defined by \eqref{eq:B0}, it suffices to check that the reduction of
$$
\theta:=\Theta\Big(\sum_{i,j\in \Z_3}
\omega(x_i,y_j) \, (x_{i+1} \wedge x_{i+2})
\cdot  (y_{j+1} \wedge y_{j+2})\Big) 
$$
modulo 4 is independent of the choice of $S$, 
for any $x_1,x_2,x_3,y_1,y_2,y_3 \in H$.
Indeed, this integer
$$
\theta=  \sum_{i,j\in \Z_3}
\omega(x_i,y_j) \, \langle x_{i+1}, y_{j+2}\rangle \,
\, \langle x_{i+2} , y_{j+1}\rangle 
-  \sum_{i,j\in \Z_3}
\omega(x_i,y_j) \, \langle x_{i+1}, y_{j+1}\rangle \,
\, \langle x_{i+2} , y_{j+2}\rangle 
$$
should be compared to the integer
\begin{eqnarray*}
\widetilde{\theta} &:=&  \sum_{i,j\in \Z_3}
\omega(x_i,y_j) \, \big( \omega(x_{i+1}, y_{j+2}) \,
 \omega( x_{i+2} , y_{j+1}) 
-  \omega( x_{i+1}, y_{j+1})
\, \omega( x_{i+2} , y_{j+2})\big) \\
&=& -3 \begin{vmatrix}
\omega(x_1,y_1) &   \omega(x_1,y_2) & \omega(x_1,y_3)\\
\omega(x_2,y_1) &   \omega(x_2,y_2) & \omega(x_2,y_3)\\
\omega(x_3,y_1) &   \omega(x_3,y_2) & \omega(x_3,y_3)\\
\end{vmatrix}  
\end{eqnarray*}
which (obviously) does not depend on $S$. 
By decomposing  every factor  in $\theta$ of the form 
$\langle x,y \rangle$ as a sum $\omega(x,y)+2\, z(x,y)$ where  $z(x,y) \in \Z$,
and distributing all the products, 
we easily observe that $\theta-\widetilde{\theta}$ belongs to $4\Z$. 
\end{proof}

We can now give a complete description 
of the abelian groups 
$\Gamma_2 \calI/ \Gamma_3 \calI$ and  $\calK/ \Gamma_3 \calI$.

\begin{theorem} \label{th:groups}
(i) The homomorphism $(\tau_2,8d'')$ induces an isomorphism
\begin{equation} \label{eq:U'}
\Gamma_2 \calI/ \Gamma_3 \calI  \stackrel{\simeq}{\longrightarrow} \mathrm{U}'(H)
\end{equation}
onto the free abelian group 
$
\mathrm{U}'(H):=\big\{ (T,z) \in \mathrm{D}'_2(H) \oplus \Z :
\operatorname{Tr}^S(T)=0, \ 
2\Theta(T) \equiv  -z\!\! \mod 8 \big\}.
$\\[0.1cm]
(ii) Similarly, the homomorphism $(\tau_2,8d'')$ induces an isomorphism
\begin{equation} \label{eq:U}
\calK/ \Gamma_3 \calI  \stackrel{\simeq}{\longrightarrow} \mathrm{U}(H)
\end{equation}
onto the free abelian group 
$
\mathrm{U}(H) := \big\{ (T,z) \in \mathrm{D}_2(H) \oplus \Z :
\operatorname{Tr}^\Lambda(T)=0, \ 
2\Theta(T) \equiv  - z\!\! \mod 4 \big\}.
$
\end{theorem}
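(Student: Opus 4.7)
The plan is to show that $\Phi := (\tau_2, 8d'')$ descends to the claimed quotients and induces the stated isomorphisms. Part~(i) rests on Theorem~B, Proposition~\ref{prop:invariants}, and the surjectivity of $\tau_2$ onto $\ker(\operatorname{Tr}^S)$ from~\eqref{eq:traces}; part~(ii) then follows by a short exact sequence argument combining part~(i), Theorem~A, and the Birman--Craggs identification~\eqref{eq:K_quotient}.

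For \emph{well-definedness} of $\Phi$ in part~(i), both $\tau_2$ and $d''$ vanish on $\Gamma_3\calI$, the trace constraint $\operatorname{Tr}^S(\tau_2(f))=0$ on $[\calI,\calI]$ is the top row of~\eqref{eq:traces}, and Proposition~\ref{prop:invariants} gives $8d''([u,v]) = -2\det(\omega(x_i,y_j))\in 2\Z$, so $8d''$ is $\Z$-valued on $[\calI,\calI]$. For the mod-$8$ congruence between $2\Theta(\tau_2(f))$ and $-8d''(f)$, I would evaluate on a commutator $[u,v]$ via Proposition~\ref{prop:invariants}: using the quantities $\theta$ and $\widetilde{\theta}=-3\det$ from the preceding lemma, one obtains $2\Theta(\tau_2([u,v])) = 2\theta \equiv 2\widetilde{\theta} = -6\det \equiv 2\det = -8d''([u,v]) \pmod{8}$, and extend by linearity since both sides are homomorphisms. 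The analogous mod-$4$ congruence on $\calK$ is verified on Dehn-twist generators $T_\gamma$ by direct calculation in a symplectic basis adapted to the bounding subsurface (giving equality $2\Theta(\tau_2(T_\gamma))=h=-8d''(T_\gamma)$), then extended using the basis-invariance of $2\Theta\pmod 4$ on $\ker(\operatorname{Tr}^\Lambda)$ from that same lemma.

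For \emph{injectivity} in~(i), if $\Phi(f)=0$ for $f\in [\calI,\calI]$, then any lift $\xi\in\Lambda^2(\Lambda^3 H)$ of $f$ along the degree-$2$ surjection $J$ of~\eqref{eq:J} satisfies $B(\xi)=\Phi(f)=0$ by Proposition~\ref{prop:invariants}, so $\xi\in\ker(J)$ by Theorem~B, whence $f\in\Gamma_3\calI$. For \emph{surjectivity}, take $(T,z)\in\mathrm{U}'(H)$ and choose $f_0\in[\calI,\calI]$ with $\tau_2(f_0)=T$ via~\eqref{eq:traces}; then $8d''(f_0)\equiv z\pmod 8$ by step~1. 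It remains to exhibit one element $f_\ast\in[\calI,\calI]$ with $\tau_2(f_\ast)=0$ and $d''(f_\ast)=1$, realized as $J(\xi)$ for an explicit $\xi\in\ker(B^{(0)})$ with $B^{(2)}(\xi)=1$, produced by an $\Sp$-symmetric integer combination of elementary wedges such as $(a_i\wedge a_j\wedge a_k)\wedge(b_i\wedge b_j\wedge b_k)$; then $f_0\cdot f_\ast^{(z-8d''(f_0))/8}$ maps to $(T,z)$.

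Part~(ii) is then handled by the five-lemma applied to the short exact sequence
$$
0 \longrightarrow \Gamma_2\calI/\Gamma_3\calI \longrightarrow \calK/\Gamma_3\calI \longrightarrow \calK/[\calI,\calI] \longrightarrow 0
$$
furnished by Theorem~A (which guarantees exactness by ensuring $[\calI,\calK]\subset\Gamma_3\calI$, so that $\calK/\Gamma_3\calI$ is abelian) and the parallel sequence $0\to\mathrm{U}'(H)\to\mathrm{U}(H)\to\mathrm{U}(H)/\mathrm{U}'(H)\to 0$, after identifying $\mathrm{U}(H)/\mathrm{U}'(H)$ with $B_{\leq 2}(\calQ)\cong\calK/[\calI,\calI]$ compatibly with $\Phi$ and $\beta$. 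The main technical obstacle is precisely this last identification: I would combine~\eqref{eq:traces} (matching $\mathrm{D}_2(H)/\mathrm{D}'_2(H)$ against the generators $\overline{u}\,\overline{v}$ of $B_{\leq 2}(\calQ)$ via the generators $\tfrac12(u\wedge v)\cdot(u\wedge v)$) with the $\Z/2$ contribution arising from the mod-$8$ versus mod-$4$ discrepancy between the constraints defining $\mathrm{U}'$ and~$\mathrm{U}$. Once this identification is in place, the five-lemma concludes; a secondary, purely computational difficulty is the construction of the explicit element~$f_\ast$ in part~(i).
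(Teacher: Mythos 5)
Your proposal takes a genuinely different route from the paper in several places and is in broad outline a sensible plan, but it has two concrete gaps that would need to be filled before it constitutes a proof.

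First, on the points where your route differs. For the well-definedness of the target constraints, the paper works through the explicit formula $d''= -\tfrac{\lambda_j}{2} - \tfrac{\Theta_S\circ \tau_2}{4}$ together with Morita's theorem that $\lambda_j$ is even on $[\calI,\calI]$; you instead extract the congruence $2\Theta(\tau_2([u,v])) \equiv -8d''([u,v])\pmod 8$ purely from Proposition~\ref{prop:invariants} and the $\theta$-versus-$\widetilde\theta$ comparison in the invariance lemma. That computation ($-6\det\equiv 2\det\pmod 8$, and $-8d''([u,v])=2\det$) is correct and, combined with the fact that $\Gamma_2\calI/\Gamma_3\calI$ is generated by commutator classes, does give the containment $\Phi(\Gamma_2\calI)\subset\mathrm{U}'(H)$. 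This is a self-contained algebraic substitute for the paper's topological input. Your verification of the $\bmod 4$ congruence on $T_\gamma$'s together with the basis-invariance of $2\Theta\bmod 4$ on $\ker\operatorname{Tr}^\Lambda$ likewise gives $\Phi(\calK)\subset\mathrm{U}(H)$.

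The first gap is in the surjectivity of (i): you need an element $f_\ast\in[\calI,\calI]$ with $\tau_2(f_\ast)=0$ and $d''(f_\ast)=1$, and you propose to obtain it as $J(\xi)$ for an explicit $\xi\in\Lambda^2(\Lambda^3 H)$ with $B^{(0)}(\xi)=0$ and $B^{(2)}(\xi)=1$. You neither exhibit this $\xi$ nor argue abstractly that it exists without circularity (since its existence is essentially what is to be proved). In fact such a $\xi$ does exist --- for instance, in the notation of the proof of Lemma~\ref{lem:K,U_3}, the combination
$$
\xi := -u(a_1,a_2,a_3) + u(b_1,a_2,a_3) + u(a_1,b_2,a_3) + u(a_1,a_2,b_3) + 2u'(a_1;a_2,a_3) + 2u'(a_2;a_1,a_3) + 2u'(a_3;a_1,a_2)
$$
has $B^{(0)}(\xi)=0$ and $B^{(2)}(\xi)=1$ --- so your approach is salvageable, but as written the surjectivity argument is incomplete. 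The paper avoids this computation entirely by invoking Morita's theorem that there is $\psi\in\ker\tau_2$ with $\lambda_j(\psi)=1$, and taking $\psi^2\in\Gamma_2\calI\cap\ker\tau_2$.

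The second gap is in part (ii). Your five-lemma plan requires an identification $\mathrm{U}(H)/\mathrm{U}'(H)\cong B_{\leq 2}(\calQ)$ \emph{compatible} with $\Phi$ and $\beta$, and this is not carried out; you yourself flag it as the ``main technical obstacle.'' The difficulty is genuine: it is not enough to compare the two sides as abstract $\Z_2$-modules, one must track how $\tau_2$ and $8d''$ interact with the mod-$8$ versus mod-$4$ constraints, and this essentially encodes the relation between $d''$ and the Birman--Craggs homomorphism. The paper sidesteps the whole computation with a short argument: if $k\in\calK$ has $\tau_2(k)=0$ then $\beta(k)$ is constant (\cite{Jo83}), and $d''(k)=0$ then forces $\beta(k)=0$ (via the Casson--Rochlin relationship hidden in eq.~\eqref{eq:d''}), whence $k\in\Gamma_2\calI$ by Johnson's isomorphism~\eqref{eq:ab_Torelli}, and one is reduced to part~(i). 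Unless you provide the missing identification, part~(ii) is not established; and even if you do, you should notice that the paper's argument is considerably shorter because it uses topological input (the parity of $\lambda_j$) that your more algebraic framework would have to re-derive.
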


\begin{proof}
According to Proposition  \ref{prop:invariants},
we have the commutative diagram
$$
\xymatrix{
\frac{\Lambda^2(\Lambda^3H)}{R_2} \ar@{->}[r]^-{J_2}_-\simeq \ar[rd]_-B &
\frac{\Gamma_2 \calI}{\Gamma_3 \calI} \ar[d]^-{(\tau_2,d'')} \\
& 
\frac{S^2(\Lambda^2 H^\Q)}{\Lambda^4 H^\Q} \oplus \Q
}
$$
where, according to Theorem~B, the  map $B$ is injective:
it follows that $(\tau_2,8d'')$ is injective.

To specify the image of $(\tau_2,8d'')$, 
we now review the way $d''$ can be defined 
from $\lambda$, the Casson invariant of $\Z$-homology $3$-spheres. 
Consider the Heegaard splitting $S^3=A \cup B $ of genus $g$ of the $3$-sphere,
and let $j:\Sigma \hookrightarrow S^3$ be an embedding of our surface $\Sigma$
such that  $j(\Sigma)$  is  the Heegaard surface $A\cap B=\partial A =-\partial B$ 
deprived of a  small disk $D$.
Then,   consider the map
$$
\lambda_j: \calI \longrightarrow \Z, \ 
f \longmapsto \lambda\big(S^3(f,j)\big)
$$
where $S^3(f,j)$ is the homology $3$-sphere $A \cup_f B$ obtained by ``cutting''
$S^3$ along its Heegaard surface and gluing back with 
$(j \circ f \circ j^{-1})\cup \id_D: \partial B \to \partial A$.
Then, by \cite[Lemma 7.2]{MM13}, we have
\begin{equation} \label{eq:d''}
d''= -\frac{\lambda_j}{2} - \frac{\Theta_S\circ \tau_2}{4}.
\end{equation}
Here the basis $S$ of $H$ is choosen so that $a_1,\dots,a_g$
(respectively, $b_1,\dots,b_g$) are the homology classes of a system 
of meridional curves $\alpha_1,\dots,\alpha_g$ for the handlebody $A$
(respectively, meridional curves $\beta_1,\dots,\beta_g$ for the handlebody $B$).

Let $f\in \Gamma_2 \calI$:
then $\lambda_j(f)$ is even as a consequence of \cite[Cor. 4.4]{Mor91}. 
Thus,  it follows from \eqref{eq:d''}
that $8d''(f)\equiv - 2\Theta(\tau_2(f))\!\! \mod 8$;
hence $(\tau_2(f),8d''(f))$ belongs to $\mathrm{U}'(H)$.
Conversely, let $(T,z) \in \mathrm{U}'(H)$: we claim that 
$(T,z)$ is in the image of $\Gamma_2\calI$ by $(\tau_2,8d'')$.
Indeed, by \eqref{eq:traces}, there exists $f\in \Gamma_2 \calI$ such that
$\tau_2(f)=T$. We know by \cite[Prop.~6.4]{Mor89}
(see also \cite[Prop.~6.3]{MM13}) the existence of a 
$\psi \in \ker\tau_2$ such that $\lambda_j(\psi)=1$:
hence  we have $\psi^2 \in \Gamma_2 \calI  \cap \ker\tau_2$
and $\lambda_j(\psi^2)=2$.
Therefore, for any $\ell\in \Z$,
the element $f_\ell := f \psi^{2\ell}$ of $\Gamma_2 \calI$ satisfies 
$$
\tau_2(f_\ell) = \tau_2(f)=T
\quad \hbox{and} \quad 
d''(f_\ell) \stackrel{\eqref{eq:d''}}{=} d''(f)-\ell.
$$
Thus, taking $\ell:= d''(f)-z/8 \in \Z$,
we obtain that the value of $f_\ell$ by $(\tau_2,8d'')$ is $(T,z)$.
This achieves the proof of (i): the homomorphism \eqref{eq:U'} is bijective.

We now prove (ii). 
The surjectivity of \eqref{eq:U} is proved similarly to the surjectivity of \eqref{eq:U'}.
Moreover, the injectivity of \eqref{eq:U'} implies the injectivity of \eqref{eq:U} in the following way.
Let $k\in \calK$ be such that $\tau_2(k)=0$ and $d''(k)=0$.
The fact that $\tau_2(k)=0$ implies that the function $\beta(k):\mathcal{Q} \to \Z_2$
(image of $k$ by the Birman--Craggs homomorphism) is constant \cite[p$.$ 178]{Jo83},
and the fact that $d''(k)=0$ implies then that $\beta(k)$ is  null.
We deduce from Johnson's determination~\eqref{eq:ab_Torelli} 
of $\calI_{\operatorname{ab}}$ that $k\in \Gamma_2 \calI$. 
\end{proof}

The following result, 
which describes the structure of the group $\calI/\Gamma_3 \calI$,
is a refinement of Morita's description \cite[Th.~3.1]{Mor91}
of the group $\calI/\ker(\tau_2)$.

\begin{theorem} \label{th:I/Gamma3I}
The  extension of groups 
$$
\xymatrix{
1 \ar[r] & \calK/\Gamma_3 \calI \ar[r]
& \calI/\Gamma_3 \calI \ar[r]^-{\tau_1} 
& \Lambda^3 H \ar[r] & 1
}
$$
is central, and its characteristic class
viewed as an element of 
$$
H^2\big(\Lambda^3 H;  \calK/\Gamma_3 \calI\big) 
\simeq \Hom\big(\Lambda^2(\Lambda^3 H), \calK/\Gamma_3 \calI\big) 
\stackrel{\eqref{eq:U}}{\simeq} \Hom\big(\Lambda^2(\Lambda^3 H), \mathrm{U}(H)\big) 
$$
maps any 
$(x_1\wedge x_2 \wedge x_3) \wedge (y_1\wedge y_2 \wedge y_3)$
to
$$
\Bigg( \sum_{i,j\in \Z_3}
\omega(x_i,y_j) \, (x_{i+1} \wedge x_{i+2})
\cdot  (y_{j+1} \wedge y_{j+2}),
- 2 \begin{vmatrix}
\omega(x_1,y_1) &   \omega(x_1,y_2) & \omega(x_1,y_3)\\
\omega(x_2,y_1) &   \omega(x_2,y_2) & \omega(x_2,y_3)\\
\omega(x_3,y_1) &   \omega(x_3,y_2) & \omega(x_3,y_3)\\
\end{vmatrix}  \Bigg).
$$
\end{theorem}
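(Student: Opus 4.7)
My plan is to combine three ingredients already established in the paper: Theorem~A, which forces the extension to be central; the standard dictionary between central extensions of an abelian group and alternating bilinear maps, which identifies the characteristic class with the commutator pairing; and Proposition~\ref{prop:invariants} together with the isomorphism~\eqref{eq:U}, which translate that commutator pairing into the explicit formula.

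First, I will check that the sequence is well defined and central. Since $\tau_1\colon \calI \to \Lambda^3 H$ is a homomorphism onto an abelian group with kernel $\calK$, and since $\Gamma_3\calI \subset \Gamma_2\calI \subset \calK$, the map $\tau_1$ descends to a surjection $\calI/\Gamma_3\calI \twoheadrightarrow \Lambda^3 H$ with kernel $\calK/\Gamma_3\calI$. Centrality is equivalent to the inclusion $[\calI,\calK] \subset \Gamma_3\calI$, which is one direction of Theorem~A.

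Next, I will identify the characteristic class. For any central extension $1\to A\to E\to G\to 1$ with $A$ and $G$ abelian, the canonical isomorphism $H^2(G;A) \simeq \Hom(\Lambda^2 G, A)$ sends the characteristic class to the commutator pairing $g_1 \wedge g_2 \mapsto [\widetilde{g_1}, \widetilde{g_2}]$, which is well defined in $A$ by centrality and alternating in $(g_1,g_2)$. In our situation, this reads
$$
\Lambda^2(\Lambda^3 H) \longrightarrow \calK/\Gamma_3\calI,\qquad
\tau_1(u)\wedge \tau_1(v) \longmapsto \{[u,v]\}_{\Gamma_3\calI},
$$
which is precisely the map $J_2$ from~\eqref{eq:J} followed by the inclusion $\Gamma_2\calI/\Gamma_3\calI \hookrightarrow \calK/\Gamma_3\calI$.

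Finally, I will transport this map along the isomorphism $(\tau_2,8d'')\colon \calK/\Gamma_3\calI \stackrel{\simeq}{\longrightarrow} \mathrm{U}(H)$ of~\eqref{eq:U}. Proposition~\ref{prop:invariants} states that $(\tau_2, d'')\circ J_2^\Q = B$, so combined with the explicit formulas~\eqref{eq:B0} and~\eqref{eq:B2} we obtain
$$
\tau_2([u,v]) = B^{(0)}\big(\tau_1(u)\wedge \tau_1(v)\big)
\quad \hbox{and} \quad
8\, d''([u,v]) = 8\, B^{(2)}\big(\tau_1(u)\wedge \tau_1(v)\big),
$$
and the latter equals $-2\det(\omega(x_i,y_j))_{i,j}$, yielding exactly the two coordinates in the statement. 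The only piece of bookkeeping is that $[u,v]\in \Gamma_2\calI$, so its image under $(\tau_2,8d'')$ a priori lands in $\mathrm{U}'(H) \subset \mathrm{U}(H)$ via~\eqref{eq:U'}, which is compatible with the stated inclusion. No substantial obstacle remains: the real work has already been done in Theorems~A and~B and in Proposition~\ref{prop:invariants}, and what is left here is to package them as a central extension together with its $2$-cocycle.
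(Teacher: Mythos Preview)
Your proposal is correct and follows essentially the same route as the paper: Theorem~A gives centrality, the characteristic class of a central extension by an abelian group is identified with the commutator pairing (the paper writes this out via a set-theoretic section $s$ and the cocycle $c(x\vert y)=s(x)s(y)s(xy)^{-1}$, whose antisymmetrization is $[s(x),s(y)]$), and Proposition~\ref{prop:invariants} together with the isomorphism~\eqref{eq:U} yields the explicit formula. Theorem~B plays no direct role here beyond its use inside the proof of~\eqref{eq:U}.
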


\begin{proof}
The centrality of the extension is given by Theorem~A.
Let $s: \Lambda^3 H \to \calI/\Gamma_3 \calI$
be a setwise section of $\tau_1$, 
and let $c: \Lambda^3H \times \Lambda^3H \to \calK/\Gamma_3\calI $ 
be the corresponding $2$-cocycle, 
i.e$.$ $c(x\vert y)=s(x)\, s(y)\, s(xy)^{-1}$.
Then, as an element of 
$\Hom(\Lambda^2(\Lambda^3 H), \calK/\Gamma_3\calI)$,
the cohomology class~$C$ of $c$ is given by
$
C(x\wedge y) = c(x\vert y)\, c(y \vert x)^{-1}
= [s(x),s(y)].
$
Thus, 
we have a commutative diagram
$$
\xymatrix{
\calI/\calK \times \calI/\calK \ar[r]^{[-,-]} 
\ar[d]_-{\tau_1 \times \tau_1}^-\simeq 
&  \Gamma_2 \calI / \Gamma_3 \calI  \ar@{^{(}->}[d] \\
\Lambda^3H \times \Lambda^3H \ar[r]^-C &  \calK/\Gamma_3\calI
}
$$
where the composite $[-,-] \circ (\tau_1 \times \tau_1)^{-1}$
is the map $J_2$.
Then the conclusion comes from Proposition~\ref{prop:invariants}
and the definition of the map $B$.
\end{proof}

We now give three applications of the previous theorems.

\begin{corollary} \label{cor:I/Gamma3I}
The group $\calI/\Gamma_3 \calI$ is torsion-free,
and its center is $\calK/\Gamma_3\calI$.
\end{corollary}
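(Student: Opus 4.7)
The plan is to deduce both statements directly from Theorem~\ref{th:I/Gamma3I} and Theorem~\ref{th:groups}, working entirely within the central extension
$$1 \longrightarrow \calK/\Gamma_3\calI \longrightarrow \calI/\Gamma_3\calI \stackrel{\tau_1}{\longrightarrow} \Lambda^3 H \longrightarrow 1.$$
For torsion-freeness, I would take any $g \in \calI/\Gamma_3\calI$ of finite order. Since $\Lambda^3 H$ is torsion-free, necessarily $\tau_1(g) = 0$, so $g$ lies in the kernel $\calK/\Gamma_3\calI$; but Theorem~\ref{th:groups}(ii) identifies this kernel with the free abelian group $\mathrm{U}(H)$, which is torsion-free, so $g = 1$.

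For the center, the inclusion $\calK/\Gamma_3\calI \subseteq Z(\calI/\Gamma_3\calI)$ is precisely the centrality of the extension, which is built into the statement of Theorem~\ref{th:I/Gamma3I} (and equivalent to Theorem~A). For the reverse inclusion, I would pick $g \in Z(\calI/\Gamma_3\calI)$, set $u := \tau_1(g) \in \Lambda^3 H$, and use the formula of Theorem~\ref{th:I/Gamma3I} for the characteristic class $C$: the centrality of $g$ forces $C(u \wedge y) = 0$ in $\mathrm{U}(H)$ for every $y \in \Lambda^3 H$. Inspecting the second coordinate of $C$, this yields
$$\det\bigl(\omega(x_i,y_j)\bigr)_{1 \leq i,j \leq 3} = 0$$
for every $y = y_1 \wedge y_2 \wedge y_3$, after writing $u$ as a sum of elementary trivectors $x_1 \wedge x_2 \wedge x_3$.

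The only substantive ingredient, which I expect to be the sole technical point of the argument, is the observation that the pairing $\Lambda^3 \omega \colon \Lambda^3 H^\Q \times \Lambda^3 H^\Q \to \Q$ defined by this determinant is nondegenerate; this is a standard consequence of the nondegeneracy of the symplectic form~$\omega$. Given that, the vanishing above forces $u \otimes 1 = 0$ in $\Lambda^3 H^\Q$, and hence $u = 0$ because $\Lambda^3 H$ is torsion-free. Therefore $g \in \ker(\tau_1) = \calK/\Gamma_3\calI$, which concludes the proof. Everything else is a direct assembly of Theorem~A, Theorem~\ref{th:groups} and Theorem~\ref{th:I/Gamma3I}; in particular, the finer component $B^{(0)}$ of the characteristic class is not even needed---the ``theta'' component $B^{(2)}$ alone suffices.
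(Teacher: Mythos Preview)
Your proposal is correct and follows essentially the same route as the paper: torsion-freeness comes from the extension of $\Lambda^3 H$ by the free abelian group $\mathrm{U}(H)$, and the center computation uses the second (determinant) coordinate of the characteristic class together with the nondegeneracy of $\Lambda^3\omega$. The paper phrases the latter via $d''$ and Proposition~\ref{prop:invariants} rather than via Theorem~\ref{th:I/Gamma3I}, and it verifies nondegeneracy explicitly by observing $b(s_1\wedge s_2\wedge s_3,\,\ov{s_1}\wedge\ov{s_2}\wedge\ov{s_3})=\pm1$, but the substance is identical.
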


\begin{proof}
The group $\calI/\Gamma_3 \calI$ is torsion-free 
because (by Theorem~\ref{th:groups}\,(ii)) it is an extension
of a torsion-free abelian group by another one.

Let $f\in \calI$ with the property that
 $[f,h]\in \Gamma_3 \calI$ for all $h\in \calI$. 
 Then, we have $d''([f,h])=0$ for all $h\in \calI$
and, by Proposition \ref{prop:invariants}, we obtain that $\tau_1(f) \in \Lambda^3H$ belongs 
to the kernel of the bilinear map
$$
b: \Lambda^3 H \times \Lambda^3 H  \longrightarrow \Z, \ 
(x_1\wedge x_2 \wedge x_3, y_1\wedge y_2 \wedge y_3) \longmapsto
\begin{vmatrix}
\omega(x_1,y_1) &   \omega(x_1,y_2) & \omega(x_1,y_3)\\
\omega(x_2,y_1) &   \omega(x_2,y_2) & \omega(x_2,y_3)\\
\omega(x_3,y_1) &   \omega(x_3,y_2) & \omega(x_3,y_3)\\
\end{vmatrix}.
$$
But, this form is non-singular since
$b(s_1 \wedge s_2 \wedge s_3, \ov{s_1} \wedge \ov{s_2} \wedge \ov{s_3}) = \pm 1$
for any pairwise-different $s_1,s_2,s_3 \in S$. 
Therefore, we have $\tau_1(f)=0$ so that $f\in \calK$.
This proves that the center of $\calI/\Gamma_3 \calI$ 
is contained in $\calK/\Gamma_3 \calI$. The converse inclusion is given by Theorem~\ref{th:I/Gamma3I}.
\end{proof}

Denote by 
$\calM=\calM[0]  \supset \calM[1]  \supset \calM[2] \supset \cdots$
the \emph{Johnson filtration} of the mapping class group~$\calM$. 
Thus, for every $k\geq 0$, $\calM[k]$ is the subgroup of $\calM$ acting trivially on the $k$-th nilpotent quotient
$\pi/\Gamma_{k+1} \pi$ of the fundamental group 
$\pi=\pi_1(\Sigma, \star)$.
So far, we have encountered
$\calM[1]=\calI$, 
$\calM[2]=\calK=\ker \tau_1$ and $\calM[3]=\ker \tau_2$.
The inclusion $\Gamma_k \calI \subset \calM[k]$ 
holds true for every $k\geq 1$,
and determining the gap between  
the two filtrations  is an important question: thus,
the  problem is to determine the kernel and the image
of the induced homomorphism
\begin{equation}  \label{eq:graded_bis}
 {\Gamma_k \calI}/{\Gamma_{k+1} \calI}
\longrightarrow   {\calM[k]}/{\calM[k+1]}.
\end{equation}

In degree $k=1$,
the kernel  $\calM[2]/\Gamma_2 \calI$
of \eqref{eq:graded_bis}
is isomorphic to the $2$-torsion group $B_{\leq 2}(\mathcal{Q})$
via the Birman--Craggs homomorphism $\beta$. 
The following, which identifies the kernel of \eqref{eq:graded_bis}
for  $k=2$, is a direct consequence 
of Theorem~\ref{th:groups}\,(i).

\begin{corollary} \label{cor:intersection}
We have 
$(\Gamma_2\calI \cap \calM[3])/\Gamma_3 \calI \simeq \Z$
via $d''$.
\end{corollary}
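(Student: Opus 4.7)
The plan is to read this off directly from Theorem~\ref{th:groups}(i). Since the third term of the Johnson filtration is $\calM[3] = \ker \tau_2$, an element of $\Gamma_2\calI$ lies in $\calM[3]$ if and only if its image under $\tau_2$ is zero. Hence $(\Gamma_2\calI \cap \calM[3])/\Gamma_3\calI$ corresponds, under the isomorphism
$$
(\tau_2, 8d''): \Gamma_2\calI/\Gamma_3\calI \stackrel{\simeq}{\longrightarrow} \mathrm{U}'(H),
$$
to the subgroup of $\mathrm{U}'(H)$ consisting of pairs of the form $(0, z)$.

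Next, I would analyze the defining condition of $\mathrm{U}'(H)$ on such pairs. The constraint $\operatorname{Tr}^S(T)=0$ is trivially satisfied when $T=0$, and the remaining congruence $2\Theta(T) \equiv -z \pmod 8$ becomes $z \equiv 0 \pmod 8$. Thus the corresponding subgroup of $\mathrm{U}'(H)$ is $\{0\}\times 8\Z$, which is free of rank one.

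Consequently, $8d''$ induces an isomorphism $(\Gamma_2\calI \cap \calM[3])/\Gamma_3\calI \stackrel{\simeq}{\to} 8\Z$; equivalently, $d''$ itself takes integer values on this subgroup and induces the stated isomorphism onto $\Z$. There is no real obstacle here, as all the substantive work has been absorbed into Theorem~\ref{th:groups}(i); the only point worth recording is that, although a priori $d''$ takes values in $\frac{1}{8}\Z$ on $\Gamma_2\calI/\Gamma_3\calI$, the vanishing of $\tau_2$ forces the representative in $\mathrm{U}'(H)$ to have $z\in 8\Z$, so integrality of $d''$ on $\Gamma_2\calI\cap\calM[3]$ is automatic.
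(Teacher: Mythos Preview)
Your argument is correct and is precisely the direct computation the paper has in mind when it says the corollary follows from Theorem~\ref{th:groups}(i): identify $\calM[3]$ with $\ker\tau_2$, read off the subgroup $\{(0,z)\in\mathrm{U}'(H)\}=\{0\}\times 8\Z$, and conclude that $d''$ gives the isomorphism onto~$\Z$.
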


By \cite{Jo80},
the map \eqref{eq:graded_bis}  is surjective in degree $1$;
but it is not in degree $2$ since $\beta$ induces 
an isomorphism between $\calM[2]/(\calM[3] \cdot \Gamma_2 \calI)$
and $B_{\leq 2}(\mathcal{Q})/B_{\leq 0}(\mathcal{Q})$ \cite[Prop$.$~3.3]{Yok02}.
The following gives the surjectivity
of  \eqref{eq:graded_bis} in degree~$3$.

\begin{corollary} \label{cor:M[3]}
We have $\calM[3] = \calM[4] \cdot \Gamma_3 \calI$.
\end{corollary}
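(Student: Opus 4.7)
The plan is to prove surjectivity of the inclusion-induced map
$\Gamma_3\calI/\Gamma_4\calI \to \calM[3]/\calM[4]$.
Since $\calM[4]$ coincides with the kernel of the third Johnson homomorphism $\tau_3$ restricted to $\calM[3]$, this reduces to the equality $\tau_3(\Gamma_3\calI) = \tau_3(\calM[3])$ in the third piece of the Johnson Lie algebra; the reverse inclusion $\tau_3(\Gamma_3\calI) \subseteq \tau_3(\calM[3])$ holds trivially since $\Gamma_3\calI \subseteq \calM[3]$.

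First, I would treat the problem rationally. As already recorded in the introduction, Hain's theorem \cite{Ha97} implies that $(\Gr^\Gamma \calI)\otimes\Q$ is generated in degree $1$ by $\calI_{\abf}\otimes\Q\simeq\Lambda^3 H^\Q$, which coincides with the degree-$1$ part of the associated graded of the Johnson filtration; hence the natural homomorphism \eqref{eq:graded} is surjective in every degree after tensoring with $\Q$. In particular, $\calM[3]/(\Gamma_3\calI\cdot \calM[4])$ is a torsion abelian group.

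Next, I would invoke the main result of \cite{Faes22}, which (read in our setting) provides an explicit finite generating system of $\calM[3]$ modulo $\calM[4]$ by elements that can be realized as commutators of the form $[\calI,\calK]$. Combined with Theorem~A, i.e.\ $\Gamma_3\calI = [\calI,\calK]$, this will place every such generator in $\Gamma_3\calI$ and yield the desired equality $\calM[3] = \calM[4]\cdot \Gamma_3\calI$.

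The main obstacle is precisely this integral refinement beyond Hain's rational statement. In degree $k=2$ the analogous equality \emph{fails} due to Birman--Craggs phenomena: according to Yokomizo's result \cite[Prop.~3.3]{Yok02} recalled above, $\beta$ induces an isomorphism between $\calM[2]/(\calM[3]\cdot \Gamma_2\calI)$ and $B_{\leq 2}(\calQ)/B_{\leq 0}(\calQ)$, which is nontrivial. So the vanishing of all torsion obstructions in degree~$3$ is genuinely nontrivial, and it is exactly for this that the input from \cite{Faes22} is essential.
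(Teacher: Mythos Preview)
Your proposal has a genuine gap: you misstate what \cite{Faes22} provides. That paper does \emph{not} give a generating system of $\calM[3]/\calM[4]$ by commutators of the form $[\calI,\calK]$. The result actually invoked here (Theorem~A of \cite{Faes22}) is the existence of an element $\psi\in\calM[4]$ with $\lambda_j(\psi)=-1$, equivalently $2d''(\psi)=1$ by~\eqref{eq:d''}. Your argument, as written, hinges on a statement that is not in that reference, so it does not go through.

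The paper's route is also different in spirit from yours: it relies on Theorem~B (via Theorem~\ref{th:groups}\,(ii)), not on Theorem~A. Concretely, given $f\in\calM[3]$ one has $\tau_2(f)=0$ automatically, and $2d''(f)\in\Z$ by~\eqref{eq:d''}. One then corrects $f$ by $k:=\psi^{2d''(f)}\in\calM[4]$, so that $fk^{-1}\in\calM[3]$ satisfies both $\tau_2(fk^{-1})=0$ and $d''(fk^{-1})=0$. The injectivity of $(\tau_2,8d'')$ on $\calK/\Gamma_3\calI$ (Theorem~\ref{th:groups}\,(ii)) then forces $fk^{-1}\in\Gamma_3\calI$. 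Your rational step via Hain is correct but unnecessary here; the paper's argument is direct and does not pass through $\tau_3$ or the structure of $\calM[3]/\calM[4]$ at all.
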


\begin{proof}
    Let $f \in \calM[3]$. By \cite[Th$.$ A]{Faes22}, 
    there exists an element $\psi \in \calM[4]$ 
    such that $\lambda_j(\psi) = -1$, 
    which implies that $2d''(\psi) = 1$ by \eqref{eq:d''}. 
    We also have that $2d''(\calM[3]) \subset \mathbb{Z}$, 
    hence setting $k := \psi^{2d''(f)} \in \calM[4]$, 
    we obtain that $fk^{-1} \in \calM[3]$ and $d''(fk^{-1}) = 0$.
    It follows from  Theorem~\ref{th:groups}\,(ii) that
    $fk^{-1}$ belongs to $\Gamma_3 \calI$.
\end{proof}

\subsection{Relation to the monoid of homology cylinders}

Let $\mathcal{IC}:=\mathcal{IC}(\Sigma)$ be the monoid of \emph{homology cylinders} over the surface $\Sigma$:
its neutral element is the trivial cylinder $\Sigma \times [-1,+1]$,
and its operation is the ``stacking product''.
The reader is referred to the survey paper \cite{HM12} for the precise definition of $\mathcal{IC}$,
and an overwiew of its relationship with the study of the Torelli group. 
Here, we simply recall a few facts and some notations about  homology cylinders:
\begin{itemize}
\item The monoid $\mathcal{IC}$ comes with a sequence of \emph{$Y_k$-equivalence} relations  (for $k\geq 1$),  
which are defined by surgery techniques.
\item The  \emph{$Y$-filtration}
$\mathcal{IC}= Y_1\mathcal{IC} \supset Y_2\mathcal{IC}  \supset Y_3\mathcal{IC} \supset \cdots$
on the monoid $\mathcal{IC}$ is defined by 
$ Y_k\mathcal{IC} := \{ M \in \mathcal{IC} : M \hbox{ is $Y_k$-equivalent to } \Sigma \times [-1,+1] \}$.
\item For every $k\geq 1$, the quotient monoid $\mathcal{IC}/Y_k$ is actually a group.
\item The ``mapping cylinder'' construction defines a monoid homomorphism 
$\textbf{c}:\calI \to \mathcal{IC}$, 
which is injective and maps the lower central series to the $Y$-filtration.
\end{itemize}

It is an important problem to compare the lower central series of $\calI$
with the pull-back by~$\textbf{c}$ of the $Y$-filtration.
In the lowest (non-trivial) degree, 
it is known that the homomorphism
$\textbf{c}:\calI/\Gamma_2\calI \to \mathcal{IC}/Y_2$
induced by $\textbf{c}$
is an isomorphism \cite{Ha00,MM03}.
In the next degree,  the group $\mathcal{IC}/Y_3$
has been given a complete description in \cite[Th$.$ 5.8]{MM13},
in the same spirit of Theorem~\ref{th:I/Gamma3I} 
and in terms of ``symplectic Jacobi diagrams'';
but the question of its comparison 
with the group $\calI/\Gamma_3 \calI$ was left  open.
We can now answer this question.

\begin{theorem} \label{th:IC/Y3}
There is an exact sequence of groups
\begin{equation} \label{eq:sequence}
\xymatrix @C=1.1cm  {
1 \ar[r] & \calI/\Gamma_3\calI  \ar[r]^-{\textbf{c}} 
& \mathcal{IC}/Y_3 \ar[r]^-\alpha & 
S^2H \ar[r]^-{\omega \!\!\!\!\mod 2} & \Z_2 \ar[r] & 1
}
\end{equation}
where $\alpha$ 
denotes the ``quadratic part''
of the relative Alexander polynomial of homology cylinders, 
as defined in \cite[\S 3.2]{MM13}.
\end{theorem}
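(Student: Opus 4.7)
The proof plan is to leverage the complete description of $\mathcal{IC}/Y_3$ obtained in \cite[Th$.$ 5.8]{MM13} in terms of ``symplectic Jacobi diagrams'' and the quadratic Alexander map $\alpha$, and compare it with our description of $\calI/\Gamma_3 \calI$ from Theorem~\ref{th:I/Gamma3I}. Since $\textbf{c}$ sends $\Gamma_3 \calI$ into $Y_3 \calIC$, it induces a well-defined homomorphism $\calI/\Gamma_3 \calI \to \calIC/Y_3$, and the four assertions to be checked are the injectivity of $\textbf{c}$, the equality $\mathrm{Im}(\textbf{c}) = \ker(\alpha)$, the equality $\mathrm{Im}(\alpha) = \ker(\omega \bmod 2)$, and the surjectivity of $\omega \bmod 2$.

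For the injectivity of $\textbf{c}$, I would argue that the complete set of invariants detecting $\calI/\Gamma_3 \calI$ --- namely $\tau_1$, together with $\tau_2$, $d''$, and the Birman--Craggs invariant $\beta$ on the kernel $\calK/\Gamma_3 \calI$ --- all factor through $\textbf{c}$ via natural extensions defined on $\calIC/Y_3$. Indeed, this is the content of the degree $\leq 2$ part of the LMO homomorphism established in \cite{HM09} (see Remark~\ref{rem:LMO}), together with the extension of the Birman--Craggs map to homology cylinders \cite{MM03}. Hence, if $f \in \calI$ satisfies $\textbf{c}(f) \in Y_3 \calIC$, then $\tau_1(f)$, $\tau_2(f)$, $d''(f)$ and $\beta(f)$ must all vanish, which by Theorem~\ref{th:I/Gamma3I} combined with Theorem~\ref{th:groups}\,(ii) and Johnson's determination \eqref{eq:ab_Torelli} of $\calI_{\operatorname{ab}}$ forces $f \in \Gamma_3 \calI$.

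For $\mathrm{Im}(\textbf{c}) \subset \ker(\alpha)$, the mapping cylinder of any element of $\calI$ has trivial relative Alexander polynomial (its gluing acts trivially on $H$), so $\alpha \circ \textbf{c} = 0$. The reverse inclusion $\ker(\alpha) \subset \mathrm{Im}(\textbf{c})$ would follow by a direct comparison of the explicit presentations of both groups: by \cite[Th$.$ 5.8]{MM13}, once $\alpha$ is quotiented out, the remaining invariants controlling $\calIC/Y_3$ are exactly those (the LMO invariants in degree $\leq 2$ and the Birman--Craggs homomorphism) classifying $\calI/\Gamma_3 \calI$ in our Theorem~\ref{th:I/Gamma3I}. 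Finally, the computation of the image of $\alpha$ carried out in \cite[\S 3.2]{MM13} shows that it coincides with $\ker(\omega \bmod 2 : S^2 H \to \Z_2)$, and the surjectivity of $\omega \bmod 2$ is immediate since $\omega(a_1 \cdot b_1) \equiv 1 \pmod 2$.

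The main obstacle will be the equality $\ker(\alpha) = \mathrm{Im}(\textbf{c})$: although both groups are controlled by the ``same'' diagrammatic data (the trace of $\tau_2$ plus a theta-coefficient plus $\beta$), rigorously matching the two descriptions requires carefully tracking how the abelian kernel $\calK/\Gamma_3 \calI \simeq \mathrm{U}(H)$ of the extension in Theorem~\ref{th:I/Gamma3I} sits inside the analogous abelian kernel of $\calIC/Y_3$ modulo $\alpha$, and checking that the two characteristic $2$-cocycles on $\Lambda^3 H$ agree (which should hold by naturality of the LMO homomorphism and Proposition~\ref{prop:invariants}).
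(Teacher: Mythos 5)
Your overall skeleton (injectivity of $\textbf{c}$, $\mathrm{Im}(\textbf{c})=\ker\alpha$, $\mathrm{Im}(\alpha)=\ker(\omega\bmod 2)$) is the right one, and your injectivity argument is essentially the paper's: $\tau_1,\tau_2,d''$ are preserved under $Y_3$-equivalence, so their vanishing forces $f\in\Gamma_3\calI$ by Theorem~\ref{th:groups}\,(ii). (You do not need to invoke $\beta$ separately: its vanishing is already deduced from $\tau_2=0$ and $d''=0$ inside the proof of Theorem~\ref{th:groups}\,(ii).) However, the two remaining steps are left as genuine gaps rather than proofs.

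For $\ker\alpha\subset\mathrm{Im}(\textbf{c})$, "comparing the explicit presentations" is not carried out, and the actual argument needs specific inputs that your sketch does not identify. The paper's route is constructive: given $M$ with $\alpha(M)=0$, first write $M=M_0\cdot\mathbf{c}(f)$ with $M_0\in\calKC$ using $\calIC=\calKC\cdot\mathbf{c}(\calI)$ (surjectivity of $\tau_1$ on $\calI$); then use the compatibility $p\circ\alpha=\operatorname{Tr}^\Lambda\circ\tau_2$ on $\calKC$ (from \cite[Prop.~6.1]{Faes23pre}, diagram \eqref{eq:square}) to conclude $\operatorname{Tr}^\Lambda(\tau_2(M_0))=0$, and the extended formula \eqref{eq:d''_bis} for $d''$ on $\calKC$ (note the extra term $\check\Theta_S\circ\alpha$, which vanishes here) to get the congruence $8d''(M_0)\equiv-2\Theta(\tau_2(M_0))\bmod 4$; this places $(\tau_2(M_0),8d''(M_0))$ in $\mathrm{U}(H)$, so the surjectivity half of Theorem~\ref{th:groups}\,(ii) produces $u\in\calK$ with the same invariants, and \cite[Th.~A]{MM13} (the characterization of $Y_3$-equivalence by these invariants) gives $\mathbf{c}(u)\sim_{Y_3}M_0$. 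Your "matching of characteristic cocycles" is not needed once one argues this way. Similarly, the image of $\alpha$ is \emph{not} simply computed in \cite[\S 3.2]{MM13}: the paper derives $\alpha(\calIC)=p^{-1}(\operatorname{Tr}^\Lambda(\mathrm{D}_2(H)))=\ker(\omega\bmod 2)$ by combining the surjectivity of $\tau_2:\calKC\to\mathrm{D}_2(H)$ from \cite[Th.~3]{GL05}, the inclusion $\ker p\subset\alpha(\calKC)$ from \cite[Prop.~3.13]{MM13}, the square \eqref{eq:square}, and the exactness of the bottom row of \eqref{eq:traces}. Citing \cite[\S 3.2]{MM13} for this equality, and justifying $\alpha\circ\mathbf{c}=0$ merely by "the gluing acts trivially on $H$", both paper over steps that require actual arguments or the correct references.
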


\begin{proof} 
Some of the homomorphisms 
that we considered in the previous subsections
on (subgroups of) $\calI$
extend to (submonoids of) $\mathcal{IC}$.
Specifically, we shall need the following:
\begin{itemize}
\item The first Johnson homomorphism extends to $\tau_1: \mathcal{IC} \to \Lambda^3 H$,
and encodes the action of $\mathcal{IC}$ on $\pi/\Gamma_3 \pi$;
we denote its kernel by $ \mathcal{KC}:=  \mathcal{KC}(\Sigma)$.
\item The second Johnson homomorphism extends to 
$\tau_2: \mathcal{KC} \to \mathrm{D}_2(H)$,
and encodes the action of $\mathcal{KC}$ on $\pi/\Gamma_4 \pi$.
\item The second core of the Casson invariant extends to $d'': \mathcal{KC} \to \Q$,
and formula \eqref{eq:d''} generalizes to\\[-0.8cm]
\begin{equation} \label{eq:d''_bis}
d''= -\frac{\lambda_j}{2} - \frac{\check{\Theta}_S\circ \alpha}{4}
- \frac{\Theta_S\circ \tau_2}{4},
\end{equation}
where $\lambda_j: \mathcal{IC} \to \Z$ measures 
the Casson invariant after the ``insertion'' of homology cylinders,
and $\check{\Theta}_S: S^2H \to \Q$
is a certain homomorphism depending on the choice of the basis $S$ of $H$.
\end{itemize}

We prove the injectivity of $\textbf{c}:\calI/\Gamma_3\calI \to \mathcal{IC}/Y_3$.
Let $f\in \calI$ whose mapping cylinder
$\textbf{c}(f)$ is $Y_3$-equivalent to the trivial cylinder.
Since the action of $\mathcal{IC}$
on $\pi/\Gamma_4 \pi$ is preserved by $Y_3$-surgery,
we have $f\in \calK$ and $\tau_2(f)=0$.
Since the Casson invariant is preserved by $Y_3$-surgery,
we also have $d''(f)=0$. 
It follows from the injectivity of \eqref{eq:U} in Theorem~\ref{th:groups}
that $f$ belongs to $\Gamma_3 \calI$.

That the image of $\textbf{c}:\calI/\Gamma_3\calI \to \mathcal{IC}/Y_3$ is contained
in $\ker \alpha$ is given by \cite[Prop. 3.13]{MM13}.
To prove the converse inclusion,
let $M\in \mathcal{IC}$ satisfying $\alpha(M)=0$.
Since $\tau_1: \calI \to \Lambda^3 H$ is surjective,  
we observe that  $\mathcal{IC}=\mathcal{KC} \cdot \mathbf{c}(\calI)$:
thus, there is an $f\in \calI$ such that 
$M_0:= M \cdot \mathbf{c}(f^{-1})$ 
belongs to $\mathcal{KC}$.
Next, according to \cite[Prop. 6.1]{Faes23pre},  
we have the  commutative diagram
\begin{equation} \label{eq:square}
\xymatrix{
\mathcal{KC} \ar[r]^-\alpha \ar[d]_-{\tau_2} & 
S^2 H \ar@{->>}[d]^-p \\
\mathrm{D}_2(H)  \ar[r]_-{\operatorname{Tr}^\Lambda}  
& \Lambda^2 H\otimes \Z_2
}    
\end{equation}
where the projection $p$
maps $h\cdot k$ to $(h\wedge k \!\! \mod 2)$ for all $h,k\in H$.
Therefore, the fact that $\alpha(M_0)=\alpha(M)=0$
implies that $\tau_2(M_0) \in \ker \mathrm{Tr}^\Lambda $; besides, 
we deduce from \eqref{eq:d''_bis} that 
$8d''(M_0)\equiv -2 \Theta(\tau_2(M_0))\!\!\mod 4$. Then, it follows from the surjectivity of \eqref{eq:U} in Theorem~\ref{th:groups}
that there exists   $u \in \calK$ such that $d''(u)=d''(M_0)$
and $\tau_2(u)=\tau_2(M_0)$.
As a consequence of \cite[Th$.$~A]{MM13}, we deduce that $\mathbf{c}(u)$
is $Y_3$-equivalent to $M_0$. Hence we obtain that 
$$
\{M\}= \big\{M_0\, \mathbf{c}(f) \big\} 
= \big\{\mathbf{c}(uf) \big\}  \ \in \mathcal{IC}/Y_3
$$
belongs to the image of $\mathbf{c}$.

Finally, the sequence \eqref{eq:sequence} is also exact at $S^2 H$ 
for the following reasons.
The identity
$\mathcal{IC}=\mathcal{KC} \cdot \mathbf{c}(\calI)$
implies that $\alpha(\mathcal{IC})=\alpha(\mathcal{KC})$.
We know from \cite[Th$.$~3]{GL05} 
that the map $\tau_2$ in \eqref{eq:square} is surjective,
and we deduce from \cite[Prop. 3.13]{MM13} that $\alpha(\mathcal{KC})$ contains 
$\ker p=\langle h^2 \,\vert\, h\in H \rangle$.
Thus, we conclude that
\begin{eqnarray*}
\alpha(\mathcal{KC}) 
\ = \ \alpha(\mathcal{KC}) + \ker p
&=& p^{-1}\big( \operatorname{Tr}^\Lambda(\mathrm{D}_2(H))\big)\\
&\stackrel{\eqref{eq:traces}}{=}&
p^{-1}\big( \ker \omega: \Lambda^2 H \otimes \Z_2 \to \Z_2\big)
\ = \  \big( \ker (\omega\!\!\!\mod 2): S^2 H  \to \Z_2\big).
\end{eqnarray*}
\end{proof}

\end{document}